\journal{}
\theoremstyle{plain}
\newtheorem{theorem}{Theorem}
\newtheorem{lemma}[theorem]{Lemma}
\newtheorem{example}{Example}
\theoremstyle{remark}
\begin{document}
\allowdisplaybreaks[4]
%
%
%

\title {Robust Numerical Solution for Solving Elastohydrodynamic Lubrication (EHL) Problems 
using Total Variation Diminishing (TVD) Approach}
%
\author[add1]{Peeyush Singh\corref{cor1}}
\ead{peeyushs8@gmail.com,peeyush@tifrbng.res.in}
\cortext[cor1]{Corresponding author}

\address[add1]{Tata Institute of Fundamental Research Centre for Applicable Mathematics,Bangalore-560 065,India}

\date{}
\begin{abstract}
In this study, we propose a class of total variation diminishing (TVD) schemes for solving pseudo-monotone variational 
inequality arises in elasto-hydrodynamic lubrication point contact problem.
A limiter based stable hybrid line splittings are introduced on hierarchical multi-level grid.
These hybrid splittings are designed by use of diffusive coefficient 
and mesh dependent switching parameter in the computing domain of interest. 
The spectrum of illustrated splittings is derived with the help of well known local Fourier analysis (LFA). 
Numerical tests validate the performance of scheme and its competitiveness to the previous existing schemes. 
Advantages of proposed splittings are observed in the sense that it reduces computational complexity (up to ($O(n\log n)$)
and solve high order discretization directly (no defect-correction tool require) without perturbing
the robustness of the solution procedure (i.e. it works well for large range of load parameters).
\end{abstract}
\begin{keyword}
TVD schemes \sep Defect-correction \sep multi-grid
\sep Elastohydrodynamic Lubrication contact problem \sep variational inequalities
\MSC  65N06 \sep 65N55 \sep 65K15 \sep 35R35 \sep 45K05
\end{keyword}

\maketitle
\allowdisplaybreaks
\def\R{\mathbb{R}}
\def\cA{\mathcal{A}}
\def\cK{\mathcal{K}}
\def\cN{\mathcal{N}}
\def\p{\partial}
\def\O{\Omega}
\def\bbP{\mathbb{P}}
\def\cV{\mathcal{V}}
\def\cM{\mathcal{M}}
\def\cT{\mathcal{T}}
\def\cE{\mathcal{E}}
\def\bF{\mathbb{F}}
\def\bC{\mathbb{C}}
\def\bN{\mathbb{N}}
\def\ssT{{\scriptscriptstyle T}}
\def\HT{{H^2(\O,\cT_h)}}
\def\mean#1{\left\{\hskip -5pt\left\{#1\right\}\hskip -5pt\right\}}
\def\jump#1{\left[\hskip -3.5pt\left[#1\right]\hskip -3.5pt\right]}
\def\smean#1{\{\hskip -3pt\{#1\}\hskip -3pt\}}
\def\sjump#1{[\hskip -1.5pt[#1]\hskip -1.5pt]}
\def\jumptwo{\jump{\frac{\p^2 u_h}{\p n^2}}}

\section{Introduction}\label{sec:one}
Elasto-hydrodynamic lubrication (EHL) is more often understood as a phenomenon of fluid film lubrication in which the natural 
process of hydrodynamic fluid film creation is governed due to deformation of contacting bodies and lubricant viscosity increases 
due to high pressure.
Significant contributions have been made by many researchers in the development of more efficient and accurate methods 
for the study of EHL in last few decades (e.g.\cite{v1,vr,hamrock,ehlbook,cimatti,ALubrecht,sahmed14,peeyush,moes,venner94}).
It is well known that many numerical solutions of EHL model suffer lack of numerical stability and convergence 
during computation, if not tackled correctly. On the other hand, any direct solver such as Newton-Raphson technique takes a lot of 
computational storage and time (up to $O({n^{3}})$) to solve the model and hence it has no commercial use in practice.
In 1992, Venner \cite{vr} has introduced a low order discretization for EHL model (see ~\ref{model:ehl}) which is stable for larger range of load parameters.
However, author's best knowledge stable schemes for the EHL model~\ref{model:ehl} are largely unavailable in literature which work well for very large
range of load parameters other than Venner approach and in that sense it turns out to be a challenging problem in scientific community.
The main numerical difficulty in these problems occurs due to lack of stable smoother and poor approximation of pressure 
profile near its steep gradient location by any standard iterative procedure.
Also when applied load in contacting bodies are sufficiently high then many people observed wiggles in pressure 
and film thickness profile by using central or any high order scheme in convection term of Reynolds equation.
One possible way to overcome the difficulty, people have used lower order discretization in convection term. 
In addition, for obtaining the high order stable, accurate solutions for such problems, researchers have 
applied lower order scheme in a defect corrected way through a suitable higher order discretization. 
However, such defect-correction \cite{koren,koren88} setting most the time is not able to solve the difficulty in the sense that it
does not reduce residual accurately due to poor conditioning of matrix in outer iteration (e.g.\cite{Oosterlee}).
Furthermore, lower order schemes are more diffusive and allow to produce smoothing effect in the steep gradient region of solution
and less accurate in the smooth part of the solution.
This is the main motivation for present study to adopt total variation diminishing (TVD) 
approach for the EHL model problem. The reason behind TVD schemes for EHL model have been rarely applied so far in literature
due to the fact that implementation is not obvious and straight forward as the case of linear-convection diffusion due 
to strong coupling of pressure and film thickness term in existing model.
Therefore, in this article an attempt has been made to solve the problem generalizing TVD concept efficiently in the existing EHL model.\\
TVD schemes are understood as a generalized form of upwind based discretized schemes (more detailed definition will define later).
Mostly, such schemes have been extensively devised for solving time dependent gas dynamics problems. Later on people have started to apply such concept for
steady state problem in many CFD applications. Initially, the concept of TVD has been established by Harten 
and later by Sweby \cite{harten83,harten84,sweby} to avoid unphysical wiggles in a numerical scheme.
Harten also has given necessary and sufficient condition for a scheme to be TVD. To understand the concept, we first define 
the notation total variation $TV$ of a mesh function $u^{n}$ as
\begin{align}
\label{eqn1}
 TV(u^{n}) = \displaystyle\sum_{-\infty}^{\infty}|u_{j+1}^{n}-u_{j}^{n}|=\displaystyle\sum_{-\infty}^{\infty}|\Delta_{j+1/2}u^{n}|
\end{align}
having the following convention 
\begin{align}
\label{eqn2}
 \Delta_{j+1/2}u^{n} = u_{j+1}^{n}-u_{j}^{n}
\end{align}
for any mesh function $u$ is used.
Harten's theory is understood in the form of conservation laws 
\begin{align}
\label{eqn3}
 u_{t}+ f(u)_{x} = 0.
\end{align}
The numerical approximation of Eq.~(\ref{eqn3}) is said to be TVD if
\begin{align}
\label{eqn4}
 TV(u^{n+1}) \le TV(u^{n})
\end{align}
Then Harten's condition for any scheme to be TVD is explained below.
\begin{theorem}Let a general numerical scheme for conservation laws Eq.~(\ref{eqn3}) is of the form
 \begin{gather}
 \label{eqn5}
 u^{n+1}_{i}=u^{n}_{i}-c_{i}^{n}(u_{i}^{n}-u_{i-1}^{n})+d_{i}^{n}(u_{i+1}^{n}-u_{i}^{n})
 \end{gather}
 over one time step, where the coefficients $c_{i}^{n}$ and $d_{i}^{n}$ are arbitrary value (In
 practice it may depend on values $u^{n}_{i}$ in some way i.e., the method may be nonlinear).
 Then $TV(u^{n+1}) \leq TV(u^{n})$ provided  the following conditions are satisfied
 \begin{gather}
 \label{eqn6}
 c^{n}_{i} \geq 0 \quad ,
 d^{n}_{i} \geq 0\quad ,
 c^{n}_{i}+d^{n}_{i} \leq 1\quad \forall i
  \end{gather}
 \end{theorem}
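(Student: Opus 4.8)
The plan is to argue directly in terms of the first differences $\Delta_{i+1/2}u^{n}=u_{i+1}^{n}-u_{i}^{n}$, since by (\ref{eqn1}) the total variation is assembled entirely from these quantities, so that controlling $TV$ reduces to controlling each $|\Delta_{i+1/2}u^{n+1}|$. First I would write the update (\ref{eqn5}) once at the node $i$ and once at the node $i+1$ and subtract. Using the convention (\ref{eqn2}) to rewrite $u_i^n-u_{i-1}^n=\Delta_{i-1/2}u^n$ and $u_{i+1}^n-u_i^n=\Delta_{i+1/2}u^n$, a short rearrangement collapses the result into a three-point recursion for the new difference,
\begin{gather}
\label{eqn:planrec}
\Delta_{i+1/2}u^{n+1}
= c_{i}^{n}\,\Delta_{i-1/2}u^{n}
+\bigl(1-c_{i+1}^{n}-d_{i}^{n}\bigr)\,\Delta_{i+1/2}u^{n}
+d_{i+1}^{n}\,\Delta_{i+3/2}u^{n}.
\end{gather}
This identity is the heart of the argument: one step of the scheme acts on the difference field as a three-band stencil whose band weights are exactly the coefficients constrained by (\ref{eqn6}).

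Next I would take absolute values in (\ref{eqn:planrec}) and apply the triangle inequality. The outer weights $c_i^n$ and $d_{i+1}^n$ are nonnegative by (\ref{eqn6}) and so pass outside the modulus unchanged, while the middle term contributes $|1-c_{i+1}^{n}-d_{i}^{n}|$. I would then sum over all $i$ and relabel the two outer sums so that each runs against the common difference $|\Delta_{i+1/2}u^{n}|$, which raises the index of $c$ by one and lowers that of $d$ by one; this gathers, on the edge $i+1/2$, the total weight
\begin{gather}
\label{eqn:planweight}
c_{i+1}^{n}+\bigl|1-c_{i+1}^{n}-d_{i}^{n}\bigr|+d_{i}^{n}.
\end{gather}
Here I would invoke the upper bound from (\ref{eqn6}) to conclude $c_{i+1}^{n}+d_{i}^{n}\le 1$, which makes the quantity inside the modulus nonnegative; dropping the absolute value then lets the three pieces cancel, and (\ref{eqn:planweight}) reduces exactly to $1$. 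Substituting back yields $TV(u^{n+1})\le\sum_i|\Delta_{i+1/2}u^{n}|=TV(u^{n})$, which is (\ref{eqn4}).

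I expect the genuine obstacle to be the index bookkeeping rather than any hard estimate. Two points need care. First, the relabelling inside the unbounded sum (\ref{eqn1}) is harmless only if the mesh function has bounded variation with differences decaying at $\pm\infty$ (or is periodic), so that no boundary remainder survives the shift; this should be recorded as a standing hypothesis. Second, and more delicately, the nonnegativity I must establish in order to discard the modulus in (\ref{eqn:planweight}) concerns the \emph{cross-indexed} combination $c_{i+1}^{n}+d_{i}^{n}$ produced by differencing the scheme at adjacent nodes, and matching the constraint in (\ref{eqn6}) to precisely the pair of indices that emerges after the subtraction is the step most easily mishandled: were this sum allowed to exceed $1$, the middle term would re-enter (\ref{eqn:planweight}) with the opposite sign, inflating the gathered weight beyond $1$ and breaking the telescoping. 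Once the correct pairing is fixed, the remainder is the routine triangle inequality together with the two index shifts.
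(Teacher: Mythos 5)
The paper does not actually prove this theorem: it is quoted as Harten's classical TVD criterion and the proof is left to the cited literature, so there is no in-paper argument to compare against. Your proposal is the standard proof of Harten's lemma and the structure is sound: differencing the update at nodes $i$ and $i+1$ gives the three-band recursion
\begin{equation*}
\Delta_{i+1/2}u^{n+1}=c_{i}^{n}\,\Delta_{i-1/2}u^{n}+\bigl(1-c_{i+1}^{n}-d_{i}^{n}\bigr)\,\Delta_{i+1/2}u^{n}+d_{i+1}^{n}\,\Delta_{i+3/2}u^{n},
\end{equation*}
and after the triangle inequality and the two index shifts the weight collected on each edge is $c_{i+1}^{n}+\lvert 1-c_{i+1}^{n}-d_{i}^{n}\rvert+d_{i}^{n}$, which collapses to $1$ once the term inside the modulus is known to be nonnegative. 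Your first worry, about boundary remainders in the reindexing, is unnecessary: the shifts are bijections of $\mathbb{Z}$ applied to sums of nonnegative terms, so they are exact whenever $TV(u^{n})<\infty$, which is the only case in which the claim has content.

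The one point that deserves to be called a genuine (if small) gap is the step where you ``invoke the upper bound from (\ref{eqn6})'' to conclude $c_{i+1}^{n}+d_{i}^{n}\le 1$. Condition (\ref{eqn6}) as printed bounds the \emph{same-index} sum $c_{i}^{n}+d_{i}^{n}$, and this does not imply the cross-indexed bound in general: take $c_{i}^{n}=1,\ d_{i}^{n}=0$ for even $i$ and $c_{i}^{n}=0,\ d_{i}^{n}=1$ for odd $i$; then $c_{i}^{n}+d_{i}^{n}=1$ everywhere while $c_{i+1}^{n}+d_{i}^{n}=2$ for odd $i$, and the middle term re-enters with the wrong sign exactly as you fear. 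You correctly identify this as the delicate pairing but then assert it follows from (\ref{eqn6}), which it does not. The resolution is that the defect lies in the transcription of the hypothesis rather than in your argument: Harten's condition is properly stated with interface indexing (coefficients $C_{i+1/2},D_{i+1/2}$), under which the third inequality in (\ref{eqn6}) is precisely the staggered bound $c_{i+1}^{n}+d_{i}^{n}\le 1$ that your proof requires. With the hypothesis read in that (intended) form, your proof is complete and is the standard one.
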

There has been a very well developed TVD theory available in literature for time dependent problem.
Additionally, this concept is also extended for steady state convection-diffusion case in the form of $M$- matrix \cite{Varga} 
using appropriate flux limiting schemes \cite{Oosterlee,koren,koren88,osterlee2003}. However, very little attention have been paid
in developing TVD schemes for EHL problems. In this article, our aim to investigate a class of splitting for EHL model which is robust and
high order accurate ( at least second order in smooth part of the solution ) for larger range of load parameters.
\subsection{Model Problem}\label{model:ehl}
The following two dimensional circular point contact model problem is 
taken for numerical study defined below in the form of variational inequality 
written in non dimensional form
\begin{align}
\label{eqn7}
 \frac{\partial }{\partial x} \Big(\epsilon \frac {\partial  u}{\partial x}\Big)+
 \frac{\partial }{\partial y} \Big(\epsilon \frac {\partial  u}{\partial y}\Big)
 \le \frac {\partial (\rho \mathscr{H})}{\partial x} \quad \in \quad \Omega \nonumber \\
 u\ge 0 \quad \in \quad \Omega \nonumber   \\
 u.\Big[\frac{\partial }{\partial x} \Big(\epsilon \frac {\partial  u}{\partial x}\Big)+
 \frac{\partial }{\partial y} \Big(\epsilon \frac {\partial  u}{\partial y}\Big)
 -\frac {\partial (\rho \mathscr{H})}{\partial x}\Big] = 0 \quad  \in \quad \Omega,
\end{align}
where $u$ is non-dimensional pressure of liquid (lubricant) and $\Omega$ is sufficiently
large bounded domain such that
 \begin{align}
  \label{eqn8}
  u= 0 \quad \text{on} \quad \partial \Omega.
 \end{align}
 Here term $\epsilon$  is defined as
\begin{equation*}
 \epsilon = \frac{\rho \mathscr{H}^{3}}{\eta\lambda},
\end{equation*}
where $\rho$ is dimensionless density of lubrication, $\eta$ is dimensionless viscosity of lubrication and
speed parameter
\begin{align}
  \label{eqn9}
   \lambda= \dfrac{6\eta_{0}u_{s}R^{2}}{a^{3}p_{H}}.
  \end{align}
The non-dimensionless viscosity $\eta$ is defined according to 
\begin{align}
\label{eqn10}
 \eta(u) = \exp\Bigg\{ \Bigg( \dfrac{\alpha p_{0}}{z}  \Bigg) 
 \Bigg(-1+\Big(1+\dfrac{{u}p_{H}}{p_{0}}\Big)^{z}   \Bigg)   \Bigg\}.
\end{align}
Dimensionless density $\rho$ is given by
\begin{align}
\label{eqn11}
 \rho(u) = \dfrac{0.59 \times 10^{9} + 1.34 u p_{H}}{0.59 \times 10^{9} + u p_{H}}.
\end{align}
The term film thickness $\mathscr{H}$ of lubricant is written as follows
\begin{align}
\label{eqn12}
\mathscr{H}(x,y) = \mathscr{H}_{00}+\frac{x^{2}}{2}+\frac{y^{2}}{2} + 
\frac{2}{\pi^{2}}\int_{-\infty}^{\infty} \int_{-\infty}^{\infty}\frac{u(x^{'},y^{'})dx^{'}dy^{'}}{\sqrt{(x-x^{'})^2+(y-y^{'})^2}},
\end{align}
where $\mathscr{H}_{00}$ is an integration constant.\\
The dimensionless force balance equation is defined as follows
\begin{gather}
\label{eqn13}
 \int_{-\infty}^{\infty} \int_{-\infty}^{\infty}u(x',y') dx'dy' = \frac{3\pi}{2}
\end{gather}
All notations used in EHL model are defined in ~\ref{app:one}.\\
     \begin{figure}
       \centering
        \includegraphics[width=8cm,height=10cm,keepaspectratio]{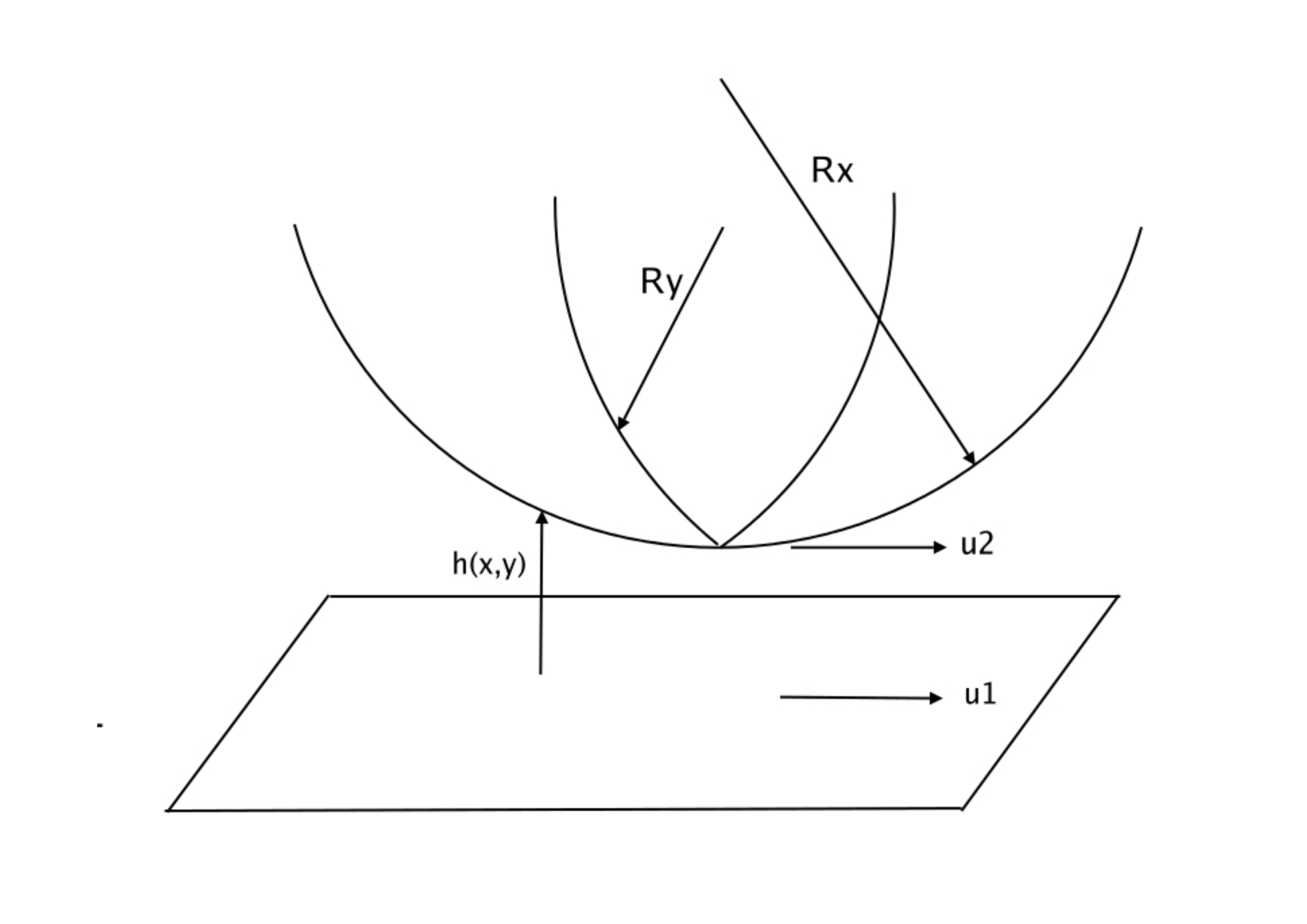}
        \caption{Schematic diagram of EHL point contact model}
        \label{fig:fig1}
        \end{figure}       
A schematic diagram of EHL point contact model is given in Fig.~\ref{fig:fig1}.     
Rest of the article is organized as followed. In Section.~\ref{section:s2}, few preliminaries are discussed which require 
in numerical study of EHL model which help in subsequent numerical analysis of the model.
In Section~\ref{sec:three}, a series of splitting are constructed by imitating linear convection-diffusion model and linear EHL model. 
In Section~\ref{sec:four}, a hybrid splitting are constructed for solving our existing EHL model defined in Section~\ref{sec:one}.
In Section~\ref{sec:five}, local Fourier analysis is performed to calculate quantitative estimate of splitting calculated in Section~\ref{sec:three}.
In Section~\ref{sec:six}, numerical experiments are conducted to check the performance of present splitting and its improvement to EHL model.
At the end of Section~\ref{sec:seven}, overall conclusion is summarized.
\section{Preliminaries}\label{section:s2}
In this section, our main goal is to introduce few prerequisite theory which already used in our computation and cannot be
ignored or avoided in the present analysis. Above nonlinear variational inequalities is solved numerically by using fixed
point iteration theory \cite{v1,free_boundary,ALubrecht}. The main challenge appears here in the form of producing a stable 
iterative smoother for EHL inequalities when the applied load on contacting bodies in EHL model become sufficiently large and after
few iterations solution start blowing up.
In such cases, iterative smoother for solving such model is stable only if nonlocal effect produced by film thickness equation 
is controlled by small change calculation in the iteration to make the overall effect local in updated pressure value.
This effect is reduced by introducing special iterative smoother known as {\bf distributive smoother}
\cite{vr,MLMI,Dinar,wittum}. The advantage of adopting such relaxation diminishes aggregation in film thickness computation 
and eventually leads to stable relaxation. Therefore, we need an extra care for computing film thickness term during each iteration.
Let us define deformation integral $\mathscr{D}_{f}$ as 
\begin{align}
\label{eqn14}
  \mathscr{D}_{f}(x,y) = \frac{2}{\pi^2}\int\limits_{-\infty}^{\infty} \int\limits_{-\infty}^{\infty}
 \frac{u(x^{'},y^{'})}{\sqrt{(x-x^{'})^2+(y-y^{'})^2}}dx^{'}dy^{'}.
\end{align}
We approximate the above integral Eqn.~\ref{eqn14} taking pressure $u$ as piecewise constant function namely $u^{hh}_{i',j'}$ on 
sub-domain
\begin{align}
\label{eqn15}
\Omega^{hh}=\Big\{ (x,y) \in \mathbb{R}^{2}\Big|x_{i^{'}}-\frac{h}{2} \le x \le x_{i^{'}}
+\frac{h}{2},y_{j^{'}}-\frac{h}{2} \le y \le y_{j^{'}}
+\frac{h}{2} \Big\}.
\end{align} 
and discrete deformation
\begin{align}
\label{eqn16}
 {\mathscr{D}_{f}}_{i,j} 
 = \mathscr{D}_{f}(x_{i},y_{j})\approx\frac{2}{\pi^2}\sum_{i'=0}^{n_{x}}
 \sum_{j'=0}^{n_{y}}{\mathscr{G}^{hh}}_{i,i^{'},j,j^{'}}u^{hh}_{i',j'},
\end{align}
where the coefficients $\mathscr{G}^{hh}_{i,i^{'},j,j^{'}}$ is written as
\begin{align}
\label{eqn17}
\mathscr{G}^{hh}_{i,i^{'},j,j^{'}} = \int\limits_{x_{i^{'}}-\frac{h}{2}}^{x_{i^{'}}
+\frac{h}{2}} \int\limits_{y_{j^{'}}-\frac{h}{2}}^{y_{j^{'}}+\frac{h}{2}}
\frac{1}{\sqrt{(x-x^{'})^2+(y-y^{'})^2}}dx^{'}dy^{'}
\end{align}
and evaluated analytically.
Above integration Eqn.~\ref{eqn17} yields nine different results for the cases that are defined as
\[ x_{i} < x_{i^{'}},  x_{i} > x_{i^{'}}, x_{i} = x_{i^{'}} \text{ and }
 y_{j} < y_{j^{'}}, y_{j} > y_{j^{'}}, y_{j} = y_{j^{'}} \]
respectively. The nine results are combined into one expression
\small
\begin{align}
\label{eqn18}
\mathscr{G}^{hh}_{i,i^{'},j,j^{'}} =\frac{2}{\pi^{2}}\Big\{
 |x_{+}|\sinh^{-1}(\frac{y_{+}}{x_{+}})+|y_{+}|\sinh^{-1}(\frac{x_{+}}{y_{+}}) 
-|x_{-}|\sinh^{-1}(\frac{y_{+}}{x_{-}}) \nonumber \\ -|y_{+}|\sinh^{-1}(\frac{x_{-}}{y_{+}})   
-|x_{+}|\sinh^{-1}(\frac{y_{-}}{x_{+}})-|y_{-}|\sinh^{-1}(\frac{x_{+}}{y_{-}})\nonumber \\
+|x_{-}|\sinh^{-1}(\frac{y_{-}}{x_{-}})+|y_{-}|\sinh^{-1}(\frac{x_{-}}{y_{-}}) \Big\},
\end{align}
\normalsize
where 
\begin{align*}
 x_{+} = x_{i}-x_{i^{'}}+\frac{h}{2},\quad
 x_{-} = x_{i}-x_{i^{'}}-\frac{h}{2}  \nonumber \\ 
 y_{+} = y_{j}-y_{j^{'}}+\frac{h}{2},\quad 
 y_{-} = y_{j}-y_{j^{'}}-\frac{h}{2}.
\end{align*}
Therefore film thickness in discretized form is written as
\begin{equation}
\label{eqn19}
 \mathscr{H}_{i,j}^{hh} := \mathscr{H}_{00}+\frac{x^2_{i}}{2}+\frac{y^{2}_{j}}{2}
 +\sum_{i'} \sum_{j'}\mathscr{G}^{hh}_{|i-i'|,|j-j'|}{u}_{i^{'},j^{'}}^{hh} ={\text{\tiny H}\mathscr{F}^{h}_{i,j}},
\end{equation}
where $\text{\tiny H}\mathscr{F}^{h}$ is right hand of the film thickness. For computing above discrete film thickness Eqn.~\ref{eqn19},
small change using relaxation is measured as
\begin{align}
\label{eqn20}
 \sigma^{h}_{i,j} = \frac{r_{i,j}^h}{\mathscr{G}^{hh}_{0,0}}, 
\end{align}
where $\mathscr{G}^{hh}_{0,0} = \mathscr{G}^{hh}_{i=i',j=j'}$ and the residual ${r_{\text{J}}^{h}}_{i,j}$ for Jacobi relaxation 
is given by
\begin{align}
\label{eqn21}
 {r_{\text{J}}^{h}}_{i,j}=\text{\tiny H}{\mathscr{F}^{h}_{i,j}}-\mathscr{H}_{00}
 -\frac{x^2_{i}}{2}-\frac{y^{2}_{j}}{2}-\sum_{i'} \sum_{j'}\mathscr{G}^{hh}_{|i-i'|,|j-j'|}\tilde{u}_{i,j}^{h} 
\end{align}
For Gauss-Seidel relaxation, residual ${r_{GS}}^{h}_{i,j}$ is given by
\begin{align}
\label{eqn22}
 {r_{GS}}^{h}_{i,j}=\text{\tiny H}{\mathscr{F}^{h}_{i,j}}-\mathscr{H}_{00}-\frac{x^2_{i}}{2}-\frac{y^{2}_{j}}{2}\nonumber    \\
                  -\sum_{i'<i} \sum_{j'}\mathscr{G}^{hh}_{|i-i'|,|j-j'|}\bar{{u}}_{i,j}^{h}
                  -\sum_{i'=i} \sum_{j'<j}\mathscr{G}^{hh}_{|i-i'|,|j-j'|}\tilde{{u}}_{i,j}^{h}   \nonumber    \\
                  -\sum_{i'=i} \sum_{j'>=j}\mathscr{G}^{hh}_{|i-i'|,|j-j'|}\tilde{{u}}_{i,j}^{h}
                  -\sum_{i'>i} \sum_{j'}\mathscr{G}^{hh}_{|i-i'|,|j-j'|}\tilde{{u}}_{i,j}^{h},
\end{align}
where $\tilde{u}_{i,j}$ and $\bar{u}_{i,j}$ old and new updated values of pressure respectively.
\subsubsection{Smooth kernel computation using MLMI}
Suppose we want to solve integral of type Eqn.~\ref{eqn19}. If kernel $\mathscr{G}(x,y)$ is sufficiently smooth with respect to 
the variable $y$, we approximate discrete kernel $\mathscr{G}^{hh}_{i,j}$ by high order interpolation operator as
\vspace{-0.8em}
\begin{gather}
\label{eqn23}
 \tilde{\mathscr{G}}^{hh}_{i,j} \simeq [\mathcal{I}^{h}_{H}\mathscr{G}^{hH}_{i,.}]_{j},
\end{gather}
where the high order interpolation operator is denoted by $\mathcal{I}^{h}_{H}$ and $\mathscr{G}^{hH}_{i,.}$
is \emph{injected} from $\mathscr{G}^{hh}_{i,.}$ \emph{i.e.}, $\mathscr{G}^{hH}_{i,J}\stackrel{\text{def}}{=} \mathscr{G}^{hh}_{i,2J}$.
Superscript $h$ and $H$ denote the finer and the coarser grid respectively.
Then the finer grid integral computation of Eqn.~\ref{eqn19} is approximated on coarser grid in following way 
\begin{align}
\label{eqn24}
 \mathcal{W}^{h}_{i}\simeq \tilde{\mathcal{W}}^{h}_{i} \stackrel{\text{def}}
 {=} h^{d}\sum_{j}\tilde{\mathscr{G}}^{hh}_{i,j}{u^{*}}^{h}_{j}
 =h^{d}\sum_{j}[\mathcal{I}^{h}_{H}\mathscr{G}^{hH}_{i,.}]_{j}{u^{*}}^{h}_{j} \nonumber \\
 = h^{d}\sum_{J}\mathscr{G}^{hH}_{i,J}[(\mathcal{I}^{h}_H)^{T}{u^{*}}^{h}_{.}]_{J} 
 = H^{d}\sum_{J}\mathscr{G}^{hH}_{i,J}{u^{*}}^{H}_{J},
\end{align}
where 
\begin{align}
\label{eqn25}
{u^{*}}^{H}_{J}\stackrel{\text{def}}{=}2^{-d}[(\mathcal{I}^{h}_{H})^T{u^{*}}^{h}_{.}]_J.
\end{align}
Whenever kernel $\mathscr{G}(x,y)$ is also smooth enough with respect to $x$ variable, the discrete sum $\mathcal{W}^{h}_{i}$
is evaluated on coarse grid points $i=2I$ by use of high order interpolation operator $\mathcal{\hat{I}}^{h}_{H}$. It is written as 
\begin{align}
\label{eqn26}
 \mathcal{W}^{h} \simeq \mathcal{\hat{I}}^{h}_{H} \mathcal{W}^H,
\end{align}
where 
\begin{align}
\label{eqn27}
 \mathcal{W}^{H}_{I} \stackrel{\text{def}}{=} {\tilde{\mathcal{W}}}^{h}_{2I} = H^{d}\sum_{J}\mathscr{G}^{HH}_{I,J}{u^{*}}^{H}_{J}
\end{align}
and where $\mathscr{G}^{HH}_{.,J}$ is \emph{injected} from $\mathscr{G}^{hH}_{.,J}$, \emph{i.e.},
$\mathscr{G}^{HH}_{I,J}\stackrel{\text{def}}{=}
\mathscr{G}^{hH}_{2I,J} = \mathscr{G}^{hh}_{2I,2J} $.
\subsubsection{Singular-Smooth or mild singular Kernel computation using MLMI}
In general, kernel $\mathscr{G}$ has a mild singularity near a point $x=y$. We rewrite our coarse grid approximation
by adding correction term near singularity in the following way (see \cite{MLMI})
\begin{align}
\label{eqn28}
 \mathcal{W}^{h}_{i} = h^{d}\sum_{j}\mathscr{G}^{hh}_{i,j}{u^{*}}^{h}_{j}
 =h^{d}\sum_{j}\tilde{\mathscr{G}}^{hh}_{i,j}{u^{*}}^{h}_{j}+
 h^{d}\sum_{j}(\mathscr{G}^{hh}_{i,j}-\tilde{\mathscr{G}}^{hh}_{i,j}){u^{*}}^{h}_{j} \nonumber \\
 = h^{d}\sum_{j}[\mathcal{I}^{h}_{H}\mathscr{G}^{hH}_{i,.}]_{j}{u^{*}}^{h}_{j}+h^{d}\sum_{j}(\mathscr{G}^{hh}_{i,j}-
 \tilde{\mathscr{G}}^{hh}_{i,j}){u^{*}}^{h}_{j} \nonumber \\
 = \mathcal{W}^{H}_{I} +h^{d}\sum_{j}(\mathscr{G}^{hh}_{i,j}-\tilde{\mathscr{G}}^{hh}_{i,j}){u^{*}}^{h}_{j}
\end{align}
Since $\tilde{\mathscr{G}}^{hh}_{i,j}$ is an interpolation of $\mathscr{G}^{hh}_{i,j}$ itself using coarse grid points, the
operator $(\mathscr{G}^{hh}_{i,j}-\tilde{\mathscr{G}}^{hh}_{i,j})$ is given by
\begin{align}
\label{eqn29}
(\mathscr{G}^{hh}_{i,j}-\tilde{\mathscr{G}}^{hh}_{i,j}) =
  \begin{cases}
    0       & \quad  \quad         j=2J \\
    O(h^{2p}\mathscr{G}^{2p}(\xi)  & \quad \text{otherwise }, \\
  \end{cases}
\end{align}
where $2p$ is the interpolation order and $\mathscr{G}^{2p}(\xi)$ is a $2p^{th}$ derivative of $\mathscr{G}$ at some 
intermediate point $\xi$. Thus if the derivative of $\mathscr{G}$ becomes small, the correction term 
become small and can be neglected. However, in case of singular smooth kernel ($i \simeq j$),
we require the corrections in a neighborhood of $i= j(||j-i|| \le m \quad \text{or} \quad i-m \le j \le i+m)$.
Thus Eq.~(\ref{eqn28}) is simplified as follows
\begin{align}
\label{eqn30}
 \mathcal{W}^{h}_{i} =\mathcal{W}^{H}_{I} 
 +h^{d}\sum_{||j-i|| \le m}(\mathscr{G}^{hh}_{i,j}-\tilde{\mathscr{G}}^{hh}_{i,j}){u^{*}}^{h}_{j}
\end{align}
Advantage of using multi-level procedure in  film thickness $\mathscr{H}$ computation reduces integral complexity up to $O(n\log n)$. 
A schematic diagram of multi level multi integration procedure is given in Fig.~\ref{fig:mlm}.
       \begin{figure}
        \centering
        \includegraphics[width=10cm,height=12cm,keepaspectratio]{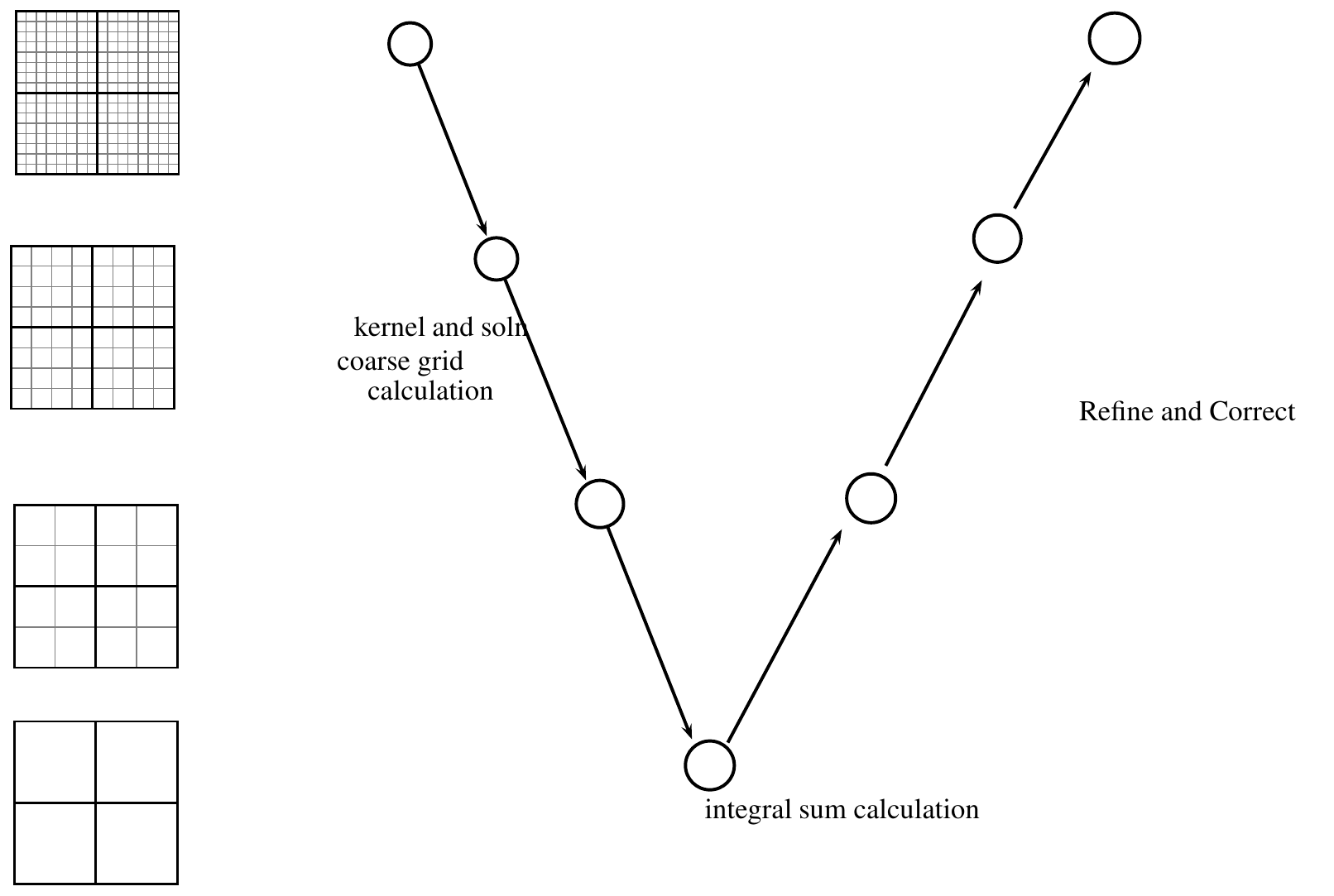}
        \caption{Schematic diagram of multi level multi integration}
        \label{fig:mlm}
         \end{figure}
\subsection{Multi-Grid Method for variational inequality arising in EHL Problem}
In this section, we discuss multi-grid method \cite{hackbus1985,mg_adopt} for variational inequality of EHL model.
EHL problem is viewed as a linear complementarity problem \cite{free_boundary,osterlee2003} of the form
\begin{align}
\label{eqn31}
 L u \le f_{1} \quad x \in \Omega  \nonumber \\
 u\ge {f}_{2} \quad x \in \Omega  \nonumber \\
 u = g \quad x \in \partial \Omega \nonumber \\
 (u-f_{2})(Lu - f_{1}) = 0 \quad x \in \Omega,
\end{align}
where $L$ is a linear differential operator.
We want to solve the problem in discrete hierarchical sub-domains of the following form 
\begin{align}
\label{eqn32}
\Big\{ \Omega_{l};\quad \Omega_{l-1} \subset \Omega_{l} \subset \Omega \quad \forall l \in \mathbb{Z}
\cap [1, M],\text{ where $M$}\in \mathbb{R} \Big \}
\end{align}
Hence discrete form of complementarity problem on level $l$ is written as 
\begin{align}
 L_{l}u_{l} \le f_{1,l} \quad x_{l} \in \Omega_{l}  \nonumber \\
 u_{l}\ge f_{2,l} \quad x_{l} \in \Omega_{l} \nonumber \\
 u_{l}= g \quad x_{l} \in \partial \Omega_{l} \nonumber \\ \label{eqn33}
 (u_{l}-f_{2,l})(L_{l}u_{l} - f_{1,l}) = 0 \quad x_{l} \in \Omega_{l}. 
\end{align}
Let $u_{l}$ and $v_{l}$ are an exact solution and approximated solution of above LCP Eqn.~\ref{eqn33}.
Suppose that the error $e_{l}={u}_{l}-{v}_{l} $ is smooth after the iteration sweeping.
Then complementarity problem satisfied for error equation $e_{l}$ on finer level is read as
\begin{align}
\label{eqn34}
 L_{l}{e}_{l}\le {r}_{l} \quad {\bf x} \in \Omega \nonumber \\
 {e}_{l}+v_{l} \ge f_{2,l} \quad {\bf x} \in \Omega \nonumber \\
 (e_{l}+v_{l}-f_{2,l})(L_{l}e_{l}-r_{l})=0 \quad {\bf x} \in \Omega,
\end{align}
where residual $r_{l}=f_{1,l}-L_{l}v_{l}$. 
Such smooth error $e_{l}$ is approximated on a coarse grid without loosing any essential information.
The LCP coarse grid equation for the coarse grid approximation of the error
${e}_{l-1}$ is therefore defined in PFAS by
\begin{align}
 L_{l-1}e_{l-1}\le I_{l}^{l-1}{r}_{l} \nonumber \\
 {e}_{l-1}+\tilde{I}_{l}^{l-1}v_{h} \ge f_{2,l-1} \nonumber \\ \label{eqn35}
 (e_{l-1}+\tilde{I}_{l}^{l-1}v_{h}-f_{2,l-1})(L_{l-1}{e}_{l-1}-I_{l}^{l-1}r_{l})=0. 
\end{align}
Since the problem is nonlinear and we are solving inequalities, we solve for full approximation
${v}_{l-1}={e}_{l-1}+I_{l}^{l-1}{v}_{l}$ but interpolate only $v_{l-1}$ back to fine grid.
The main difference between multi-grid methods for equations and inequalities occur due to fact that,
in case of fine grid converged solution $v_{l}= v_{l}^{*}$ the coarse grid correction equation should be zero. 
Consequently, we have the following relation
\begin{align}
\label{eqn36}
 I_{l-1}^{l}{e}_{l-1}=I^{l}_{l-1}({{v}^{*}}_{l-1}-\tilde{I}^{l-1}_{l}{{v}^{*}}_{l})=0 \Rightarrow v_{l-1}=\tilde{I}^{l-1}_{l}v_{l}
\end{align}
(assume that operator $I^{l}_{l-1}$ keeps nonzero quantities nonzero).\\
Furthermore, for a converged solution of fine grid LCP problem the coarse grid correction
provides us the following condition on restriction operators,
\begin{align}
 I_{l}^{l-1}(f_{1,l}-L_{l}{v}_{l}) \ge 0 \nonumber \\
 \tilde{I}^{l-1}_{l}v_{h} \ge f_{2,l-1} \nonumber \\ \label{eqn37}
 (\tilde{I}^{l-1}_{l}v_{l}-f_{2,l-1})^{T}I_{l}^{l-1}(f_{1,l}-L_{l}v_{l})=0 
\end{align}
Since $f_{1,l}-L_{l}v_{l}\equiv 0$ for any converge solution. 
Hence above inequalities~\ref{eqn37} will satisfy for any rational choice of restriction operators $I^{l-1}_{l}$ and $\tilde{I}^{l-1}_{l}$.
For capturing free boundary and for achieving fast convergence the bilinear interpolation operator $I_{l-1}^{l}$ is implemented only
for unknowns on the {\bf inactive} points that means, 
\begin{align}
\label{eqn38}
 v_{l} \Leftarrow v_{l}+ I_{l-1}^{l}{e}_{l-1} \quad \text{if} \quad v_{l} > f_{2,l} \nonumber \\
 v_{l} \Leftarrow v_{l}\quad \text{elsewhere} \quad (v_{l} = f_{2,l}).
\end{align}
\section{Linear study for convection-diffusion problem}\label{sec:three}
Our specific interest in this Section is to develop an robust splitting for our EHL model. Such splitting is constructed by imitating
series of linear model problem one by one. First we consider well known convection-diffusion problem of the form
\begin{example}\label{ex:one}
 \begin{align}
 \label{eqn39}
   L u = (a(x,y) u)_{x}-\epsilon \Delta u = f(x,y) \quad \forall x,y \in \Omega \nonumber \\
  u(x,y) = g(x,y) \quad \forall x,y \in \partial \Omega,
  \end{align}
 \end{example}
where $0 < \epsilon < < 1$ (note that we do not have any $y$ derivative in convection term).
Then discretization of convective term for $(a u)_{x}$ is performed as
 \begin{align}
 \label{eqn40}
  (a u)_{x}=\frac{a}{h}(u_{i,j}-u_{i-1,j})=: L_{1} 
 \end{align}
However, this scheme is only $O(h)$ accurate. Our interest here to increase accuracy at least smooth part without contaminating
any wiggle in solution. Consider the Van Leer's $\kappa$-schemes \cite{vanleer} for discretization
term  $(a u)_{x}$ (for $a = \text{const} > 0$) as 
\begin{gather}
\label{eqn41}
 (a u)_{x}=\frac{a}{h}[(u_{i,j}-u_{i-1,j})-\frac{\kappa}{2}(u_{i,j}-u_{i-1,j})+\frac{1-\kappa}{4}(u_{i,j}-u_{i-1,j}) \nonumber\\
 +\frac{1+\kappa}{4}(u_{i+1,j}-
 u_{i,j})-\frac{1-\kappa}{4}(u_{i,j}-u_{i-2,j})]\nonumber\\
 =L_{1} + L_{\alpha}+L_{\beta}+L_{\gamma}+L_{\delta}
\end{gather}
(similar scheme can be constructed for $a < 0$).
The resulting discrete model Example.~\ref{ex:one} by $\kappa$-scheme (take $\kappa = 0$ here) is denoted by
\begin{align}
\label{eqn42}
[L_{\kappa=0}]=\frac{a}{h}\left[\begin{matrix}
                      1/4 \ &  -5/4  &  3/4  & 1/4 & 0 \\
                     \end{matrix}
\right]+\frac{\epsilon}{h^2}
\left[\begin{matrix}
 0 \ & \ -1 \ &\  0 \\
 -1 \ & \ 4 \ &\ -1 \\
   0 \ &\ -1 \ &\ 0 \\
    \end{matrix} 
\right]
\end{align}
In general, above discrete equation.~\ref{eqn39} do not produces $M$-matrix and many iterative splitting on $L_{\kappa}$ diverge.
Therefore, this problem is solved using TVD scheme with help of appropriate flux limiters to prevent a solution from unwanted oscillation.
Now consider $\kappa=-1$ then the second-order upwind scheme looks like ($a > 0$)
\begin{gather}
 (au)_{x}=
 \frac{a}{h}[(u_{i,j}-u_{i-1,j})+\frac{1}{2}(u_{i,j}-u_{i-1,j}) \nonumber\\
 +\frac{1}{2}(u_{i,j}-u_{i-1,j})-\frac{1}{2}(u_{i-1,j}-u_{i-2,j})]\nonumber\\ \label{eqnn43}
 =L_{1}+L_{\alpha}+L_{\gamma}+L_{\delta}.
\end{gather}
We enforce Eqn.~\ref{eqnn43} to satisfy TVD condition by multiply limiter functions in 
the additional terms $L_{\alpha}, L_{\gamma} $ and $L_{\delta}$.
Then following two type of discretization for convection term are presented here as
\begin{gather}
 (au)_{x}=\frac{a}{h}[(u_{i,j}-u_{i-1,j})+\frac{1}{2}\phi(r_{i-1/2})(u_{i,j}-u_{i-1,j})\nonumber \\ \label{eqn44}
 -\frac{1}{2}\phi(r_{i-3/2})(u_{i-1,j}-u_{i-2,j})]
 =L_{1}+L_{\alpha}+L_{\gamma}
\end{gather}
and
\begin{gather}
 (au)_{x}=\frac{a}{h}[(u_{i,j}-u_{i-1,j})+\frac{1}{2}\phi(r_{i-1/2})(u_{i,j}-u_{i-1,j})\nonumber \\
 +\frac{1}{2}\phi(r_{i-3/2})(u_{i,j}-u_{i-1,j})
 -\frac{1}{2}\phi(r_{i-3/2})(u_{i-1,j}-u_{i-2,j})]\nonumber \\\label{eqn45}
 =L_{1}+L_{\alpha}+L_{\beta}++L_{\gamma},
\end{gather}
where $r_{i-1/2}=\dfrac{(u_{i+1,j}-u_{i,j})}{(u_{i,j}-u_{i-1,j})}$ and 
$r_{i-3/2}=\dfrac{(u_{i,j}-u_{i-1,j})}{(u_{i-1,j}-u_{i-2,j})}$.\\
In Fig.~\ref{fig:limtr} represents graph of limiter function $(r,\phi(r))$ on which the resulting convection discretization term
defined in Eqn.~\ref{eqnn43} and Eqn.~\ref{eqn44} enforce to be TVD and higher order accurate (see \cite{Oosterlee}).
       \begin{figure}
        \centering
        \includegraphics[width=10cm,height=12cm,angle =-90,keepaspectratio]{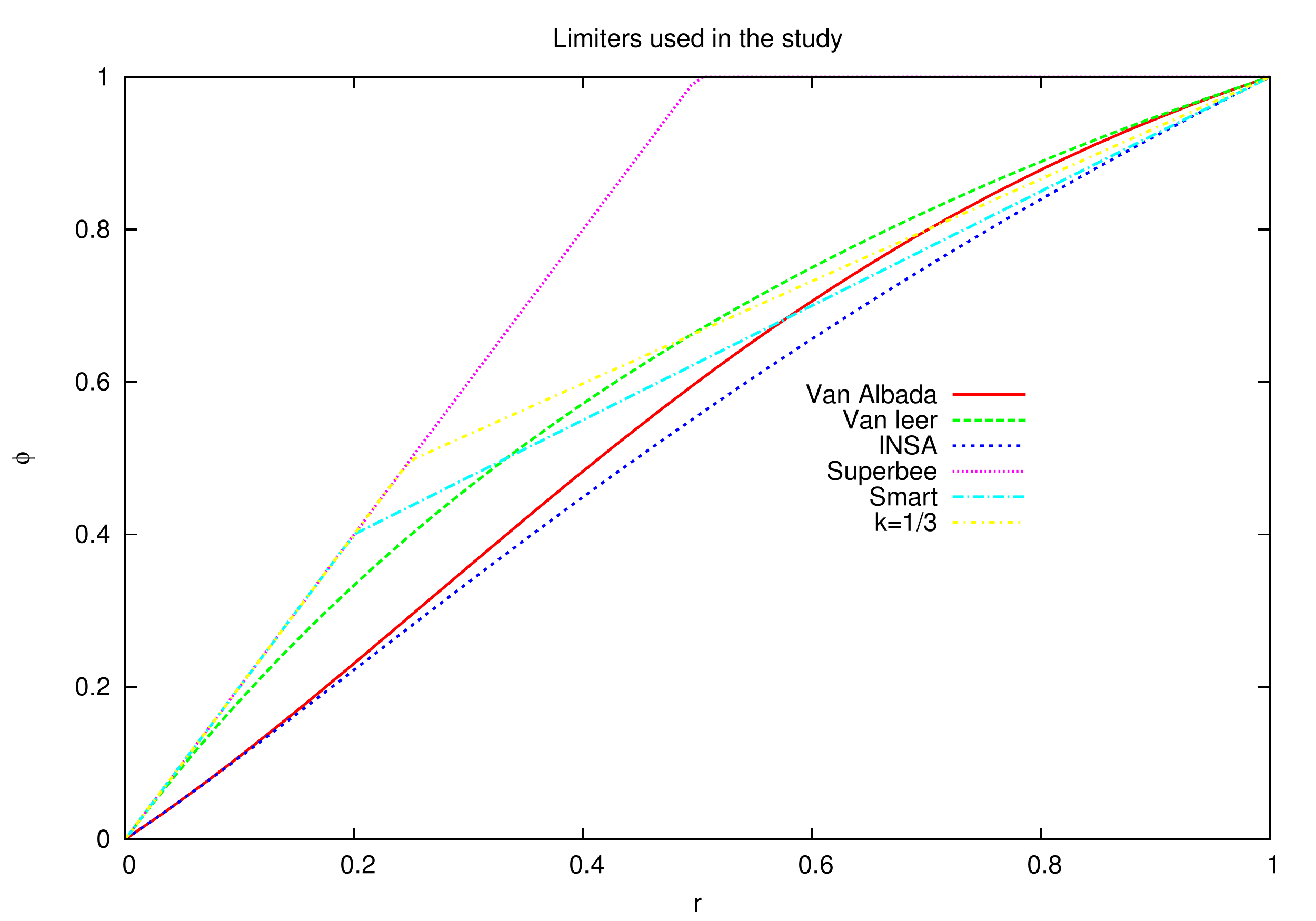}
        \caption{Schematic diagram of class of limiter function $\phi$ with respect to $r$ used in our study (see \cite{Oosterlee})}
        \label{fig:limtr}
         \end{figure}
The discrete representation of Example~\ref{ex:one} using Van-leer $\kappa$-scheme is defined as 
\begin{align}
\label{eqn46}
 L_{\kappa}u = \sum_{l_{x} \in \mathcal{I}}\sum_{l_{y}
 \in \mathcal{I}}\mathcal{C}^{(\kappa)}_{l_{x}l_{y}}u_{i+l_{x},j+l_{y}}.
\end{align}
Moreover, in stencil notation it is represented as
\begin{align}
\label{eqn47}
L_{\kappa} = 
 \begin{pmatrix}
        &         & \mathcal{C}_{02}^{\kappa} &        &        \\
        &         & \mathcal{C}_{01}^{\kappa} &         &         \\
\mathcal{C}_{-20}^{\kappa} & \mathcal{C}_{-10}^{\kappa} & \mathcal{C}_{00}^{\kappa} & \mathcal{C}_{10}^{\kappa} & \mathcal{C}_{20}^{\kappa} \\
        &         & \mathcal{C}_{0-1}^{\kappa} &        &         \\
        &         & \mathcal{C}_{0-2}^{\kappa} &        &              
 \end{pmatrix}
.
\end{align} 
Then the discrete matrix equation $L_{\kappa}u =f$ is solved efficiently by the use of multi-grid. The related splitting is constructed by 
taking the matrix operator defined in Eqn.~\ref{eqn47}. In particular case, the splitting in $x$-direction is scanned as forward 
(or backward direction depending on flow direction) lexicographical order and it is represented as
$S_{\kappa}=S_{\kappa}^{x_{f}}$ (or $S_{\kappa}^{x_{b}}$). For matrix operator $L_{\kappa}$, the forward splitting
$S_{\kappa}^{x_{f}}$ is defined as 
\begin{gather*}
 L_{\kappa}=L^{x}_{\kappa/2}-(L^{x}_{\kappa/2}-L_{\kappa})=:L^{+}_{\kappa}+L^{0}_{\kappa}+L^{-}_{\kappa},
\end{gather*}
where
\begin{gather*}
L^{x}_{\kappa/2}:=L^{+}_{\kappa}+L^{0}_{\kappa}=
 \begin{pmatrix}
        &         & 0 &        &        \\
        &         & 0 &         &         \\
      0 &       0 & 0 & 0       & 0        \\
        &         & \mathcal{C}_{0-1}^{\kappa} &        &         \\
        &         & \mathcal{C}_{0-2}^{\kappa} &        &              
 \end{pmatrix}
 +
 \begin{pmatrix}
        &         & 0 &        &        \\
        &         & 0 &         &         \\
0       & \mathcal{C}_{-10}^{\kappa/2} & \mathcal{C}_{00}^{\kappa/2} & \mathcal{C}_{10}^{\kappa/2} & 0 \\
        &         & 0 &        &         \\
        &         & 0 &        &              
 \end{pmatrix} 
\end{gather*}
and therefore overall splitting is
\begin{gather*}
L^{x}_{\kappa/2}u^{n+1}= (L^{x}_{\kappa/2}-L_{\kappa})u^{n}+f.
\end{gather*}
Now for a fixed $x$-line ($m$-grid points in $x$-direction) $(i,j_{0})_{( 1\le i \le m)}$, we have the following
\begin{gather*}
 L^{0}_{\kappa}u^{*}=f+L^{0}_{\kappa}u^{n}-(L^{-}_{\kappa}+L^{0}_{\kappa})u^{n}-L^{+}_{\kappa}u^{n+1}.
\end{gather*}
$L^{0}_{\kappa}$ corresponds the operator to the unknowns $u^{*}$ which are scanned simultaneously. $L^{-}_{\kappa}$ corresponds
the operator to the old approximation $u^{n}$, and $L^{+}_{\kappa}$ operator having updated values of $u^{n+1}$.
Now by applying under-relaxation constant $\omega$ in above equation we have
\begin{align*}
u^{n+1}=u^{*}\omega+u^{n}(1-\omega),
\end{align*}
therfore splitting equation can be rewritten in corresponding change, $\sigma^{n+1}=u^{n+1}-u^{n}$ form as
\begin{align*}
 L^{0}_{\kappa}\sigma^{n+1}=f-(L^{-}_{\kappa}+L^{0}_{\kappa})u^{n}-L^{+}_{\kappa}u^{n+1},\\
 u^{n+1}=u^{n}+\sigma^{n+1}\omega
\end{align*}
Now we construct series of splitting for solving Eqn.~\ref{eqn39} as below.\\
\underline{\bf Splitting : $L_{s0}$}
This splitting is constructed by taking upwind operator $L_{1}$ plus a ``positive" part of the second-order 
operators $L_{\alpha}$ and $L_{\beta}$ from Eqn.~\ref{eqn45} and part of diffusion operator from Eqn.~\ref{eqn47}.
\begin{gather}
\label{eqn48}
 L_{\kappa}^{0}u=-\Big\{\frac{\epsilon}{h^2}+\frac{a}{4h}(5-3\kappa)\Big\}u_{i-1,j}+\Big\{\frac{a}{h}\Big(\frac{2-\kappa}{2}
+\frac{1-\kappa}{4}\Big)+\frac{4\epsilon}{h^2}\Big\}u_{i,j}\nonumber \\
+\Big\{-\frac{\epsilon}{h^{2}}\Big\}u_{i+1,j}\nonumber\\
 L_{\kappa}^{+}u=\Big\{-\frac{\epsilon}{h^{2}}\Big\}u_{i,j-1}\nonumber\\
 L_{\kappa}^{-}u=\Big\{\frac{a}{h}\Big(\frac{1-\kappa}{4}\Big)\Big\}u_{i-2,j}+\Big\{\frac{a}{h}\Big(\frac{1-\kappa}{4}\Big)
 \Big\}u_{i-1,j}+\Big\{-\frac{a}{h}\Big(\frac{1+\kappa}{4}\Big)\Big\}u_{i,j}\nonumber \\
 +\Big\{\frac{a}{h}\Big(\frac{1+\kappa}{4}\Big)\Big\}u_{i+1,j}+\Big\{-\frac{\epsilon}{h^{2}}\Big\}u_{i,j+1}.
\end{gather}
\underline{\bf Splitting : $Ls1$}
This splitting is constructed taking upwind operator $L_{1}$ plus a ``positive" part of the second-order 
operators $L_{\alpha}$ from Eqn.~\ref{eqn44} and part of diffusion operator from Eqn.~\ref{eqn47}.
\begin{gather}
\label{eqn49}
 L_{\kappa}^{0}u=\Big\{-\frac{a}{h}\Big(\frac{2-\kappa}{2}\Big)-\frac{\epsilon}{h^2}\Big\}u_{i-1,j}+\Big\{\frac{a}{h}\Big(\frac{2-\kappa}{2}\Big)
 +\frac{4\epsilon}{h^2}\Big\}u_{i,j}+\Big\{-\frac{\epsilon}{h^{2}}\Big\}u_{i+1,j}\nonumber \\
 L_{\kappa}^{+}u=\Big\{-\frac{\epsilon}{h^{2}}\Big\}u_{i,j-1}\nonumber\\
 L_{\kappa}^{-}u=\Big\{\frac{a}{h}\Big(\frac{1-\kappa}{4}\Big)\Big\}u_{i-2,j}+\Big\{\frac{a}{h}\Big(\frac{1-\kappa}{4}\Big)
 \Big\}u_{i-1,j}+\Big\{-\frac{a}{h}\Big(\frac{1+\kappa}{4}\Big)\Big\}u_{i,j}\nonumber\\
 +\Big\{\frac{a}{h}\Big(\frac{1+\kappa}{4}\Big)\Big\}u_{i+1,j}+\Big\{-\frac{\epsilon}{h^{2}}\Big\}u_{i,j+1}
\end{gather}
\underline{\bf Splitting : ${Ls2}$}
In this case splitting coefficients $\mathcal{C}_{**}^{\kappa}$ correspond only to the first-order upwind operator $L_{1}$ of a 
discretized Eqn.~\ref{eqn44} plus diffusion operator.
\begin{gather}
\label{eqn50}
 L_{\kappa}^{0}u=\Big\{-\frac{a}{h}-\frac{\epsilon}{h^{2}}\Big\}u_{i-1,j}+\Big\{\frac{a}{h}
 +\frac{4\epsilon}{h^2}\Big\}u_{i,j}+\Big\{-\frac{\epsilon}{h^{2}}\Big\}u_{i+1,j} \nonumber \\
 L_{\kappa}^{+}u=\Big\{-\frac{\epsilon}{h^{2}}\Big\}u_{i,j-1}\nonumber\\
 L_{\kappa}^{-}u=\Big\{\frac{a}{h}\Big(\frac{1-\kappa}{4}\Big)\Big\}u_{i-2,j}+\Big\{-\frac{a}{h}\Big(\frac{1-3\kappa}{4}\Big)
 \Big\}u_{i-1,j}+\Big\{-\frac{a}{h}\Big(\frac{1+3\kappa}{4}\Big)\Big\}u_{i,j} \nonumber\\
 +\Big\{\frac{a}{h}\Big(\frac{1+\kappa}{4}\Big)\Big\}u_{i+1,j}+\Big\{-\frac{\epsilon}{h^{2}}\Big\}u_{i,j+1}
\end{gather}
\underline{\bf Splitting : $Ls3$}
The third splitting named as $\kappa$- distributive line relaxation is constructed by assuming a ghost variable $\sigma_{*}$ (with the 
same cardinality as $\sigma$) such that $\sigma = \mathcal{D}\sigma_{*}$, where matrix $\mathcal{D}$ comes due to distributive change of the relaxation.i.e.
We construct line-wise distributive splitting as
\begin{gather}
\label{eqn51}
 u_{i,j}^{n+1}=u^{n}_{i,j}+\sigma_{i,j}-\frac{(\sigma_{i+1,j}+\sigma_{i-1,j}+\sigma_{i,j+1}+\sigma_{i,j-1})}{4}
\end{gather}
This splitting is understood in the following way:
First, discretize Example~\ref{ex:one} by $\kappa$-scheme and get the equation of the form as
\begin{gather*}
L^{x}_{\kappa/2}u^{n+1}= f', \quad \text{where }  f'=(L^{x}_{\kappa/2}-L_{\kappa})u^{n}+f.
\end{gather*}
Now in the above splitting equation put the value of $u^{n+1}$ from Eqn.~\ref{eqn51} and apply distributive splitting in the form of right preconditioner defined below.
\begin{gather*}
L^{x}_{\kappa/2}\sigma^{n+1}= R^{n}\quad \text{and } L^{x}_{\kappa/2}\mathcal{D}\sigma^{n+1}_{*}= R^{n},
\end{gather*}
where the updated change in pressure and residual equation are denoted as
\begin{align*}
 \sigma^{n+1}=\mathcal{D}\sigma^{n+1}_{*} \text{ and } R^{n}=L^{x}_{\kappa/2}u^{n+1}- f'
\end{align*}
respectively.
In other way, line distributive splitting consists of following two steps;
In first step it calculates new ghost value approximation change $\sigma^{n+1}_{*}$. Second step calculates new approximation
change $\sigma^{n+1}$.\\
Now applying above splitting along the $x$-direction in Example~\ref{ex:one}, the diffusive term is computed as
\begin{gather}
\label{eqn52}
 -\epsilon \Big[\Big\{ u_{i+1,j}+\sigma_{i+1}-\frac{(\sigma_{i} + \sigma_{i+2})}{4} \Big\}
 -\Big\{ u_{i,j}+\sigma_{i}-\frac{(\sigma_{i-1} + \sigma_{i+1})}{4} \Big\}\Big]\Big/h^{2}\nonumber\\
 -\epsilon \Big[\Big\{ u_{i-1,j}+\sigma_{i-1}-\frac{(\sigma_{i-2} + \sigma_{i})}{4} \Big\}
 -\Big\{ u_{i,j}+\sigma_{i}-\frac{(\sigma_{i-1} + \sigma_{i+1})}{4} \Big\}\Big]\Big/h^{2}\nonumber\\
 -\epsilon \Big[\Big\{ u_{i,j+1}-\frac{\sigma_{i}}{4} \Big\}
 -\Big\{ u_{i,j}+\sigma_{i}-\frac{(\sigma_{i-1} + \sigma_{i+1})}{4} \Big\}\Big]\Big/h^{2}\nonumber\\
 -\epsilon \Big[\Big\{ u_{i,j-1}-\frac{\sigma_{i}}{4} \Big\}
 -\Big\{ u_{i,j}+\sigma_{i}-\frac{(\sigma_{i-1} + \sigma_{i+1})}{4} \Big\}\Big]\Big/h^{2}.
\end{gather}
and convection term is computed as
\begin{gather}
\label{eqn53}
+\Big[\frac{a_{i+1/2,j}(2+\kappa)}{2h}\Big\{ u_{i,j}+\sigma_{i}-\frac{(\sigma_{i-1} + \sigma_{i+1})}{4} \Big\}\nonumber \\
 -\frac{a_{i-1/2,j}(2+\kappa)}{2h}\Big\{ u_{i-1,j}+\sigma_{i-1}-\frac{(\sigma_{i-2} + \sigma_{i})}{4} \Big\}\Big]
\end{gather}
Other part of convective term which comes from van-leer discretization do not contain any distributive term 
as above explained and kept in right hand side during relaxation and overall splitting is written as follows
\begin{gather}
\Big(\frac{\epsilon}{4h^{2}}+\frac{a_{i-1/2,j}(2+\kappa)}{8h}\Big)\sigma_{i-2}
-\Big(\frac{7\epsilon}{4h^{2}}+\frac{a_{i+1/2,j}(2+\kappa)}{2h}+
\frac{a_{i-1/2,j}(2+\kappa)}{8h}\Big)\sigma_{i-1} \nonumber \\
+\Big(\frac{20\epsilon}{4h^{2}}+\frac{a_{i+1/2,j}(2+\kappa)}{2h}
+\frac{a_{i-1/2,j}(2+\kappa)}{8h}\Big)\sigma_{i}\nonumber \\ 
-\Big(\frac{8\epsilon}{4h^{2}}+\frac{a_{i+1/2,j}(2+\kappa)}{2h}\Big)\sigma_{i+1}
+\frac{\epsilon}{4h^{2}}\sigma_{i+2} \nonumber \\ \label{eqn54}
=R_{i,j}+\Big\{\frac{1+\kappa}{4}(u_{i+1,j}-u_{i,j})-\frac{1-\kappa}{4}(u_{i-1,j}-u_{i-2,j})\Big\}\Big]
\end{gather}
after solving above equation for $\sigma$ along $x$ line direction updated solution $u^{n+1}$ is evaluated as 
\begin{gather*}
 u_{i,j}^{n+1}=u^{n}_{i,j}+\sigma_{i,j}-\frac{(\sigma_{i+1,j}+\sigma_{i-1,j}+\sigma_{i,j+1}+\sigma_{i,j-1})}{4}.
\end{gather*}
However, above splitting $Ls3$ Eqn.~\ref{eqn54} is not robust and very rarely use in practice.\\
We are now interested in showing convergence of LCP through the above presented splitting.
Let us consider domain $\Omega \in \mathbb{R}^2$ with boundary $\partial \Omega$, and consider known functions $f$ and $g$. 
Then find $u$ in a weak sense such that these inequalities hold
\begin{example}
 \begin{align*}
  -(a(x,y)h(u))_{x}+\epsilon \Delta u \le f(x,y) \quad \forall x,y \in \Omega \nonumber\\
   u(x,y) \ge 0 \quad \forall x,y \in \Omega,\\
   u(x,y)[(a(x,y)h(u))_{x}-\epsilon \Delta u - f(x,y)]=0  \quad \forall x,y \in \Omega,\\
  u(x,y) = g(x,y) \quad \forall x,y \in \partial \Omega.
 \end{align*}
\end{example} 
Therefore, discrete version of above problem (finite difference or finite volume) is written in the matrix form
 \begin{align}
 \label{eqnnew1}
  Lu \le f, \nonumber \\
   u \ge 0,\nonumber \\
   u[L u - f]=0,
  \end{align}
where $L$ is a $M$-matrix of order $m\times m$, $u$ and $f$ are $m\times 1$-column vector.   
It is well known that solving above discrete problem is equivalent to solving quadratic
minimization problem of the form
\begin{align} 
\label{eqnnew2}
 G(u)=\frac{1}{2}u^{T}Lu-f^{T}u, \nonumber \\
 \min_{u \in \mathbb{R}^m\times1} G(u),
\end{align}
subjected to the constraints
\begin{align*}u\ge0.\end{align*}
\begin{theorem}\label{thrm:lcp}
 Let $u^{n}$ and $f^{n}$ are $m\times 1$-column vectors achieved by splitting algorithm (*),
 \begin{align*}
 L^{0}_{\kappa}\sigma^{n+1}=f-(L^{-}_{\kappa}+L^{0}_{\kappa})u^{n}-L^{+}_{\kappa}u^{n+1},\\
 \sigma^{n+1}=\max\{0,\sigma^{n+1}\},\\
 u^{n+1}=u^{n}+\sigma^{n+1}\omega
\end{align*}
 then we have $u^{n} \rightarrow u$ and 
 $f^{n} \rightarrow f$ such that $u$ and $f$ is a solution of LCP problem.
\end{theorem}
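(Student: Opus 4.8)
The plan is to recast the algorithm (*) as a projected matrix-splitting iteration for the constrained quadratic program (\ref{eqnnew2}) and to prove convergence in two stages: first existence and uniqueness of the limit, then convergence of the iterates to it. I would begin by recording that, because $L$ is an $M$-matrix, its symmetric part is positive definite, so the energy $G(u)=\frac{1}{2}u^{T}Lu-f^{T}u$ is strictly convex and coercive on the feasible cone $\{u\ge 0\}$. Hence the minimization (\ref{eqnnew2}) has a unique minimizer $u^{*}$, and the first-order (KKT) conditions for $\min_{u\ge 0}G(u)$ are exactly the complementarity relations (\ref{eqnnew1}). This reduces the theorem to showing that the iterates $u^{n}$ produced by (*) converge to $u^{*}$, whereupon $f^{n}=Lu^{n}\rightarrow f$ follows from continuity.

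Next I would make the splitting explicit. Writing $\sigma^{n+1}=u^{n+1}-u^{n}$ and collecting the updated and the lagged unknowns, algorithm (*) is the block relaxation $(L^{0}_{\kappa}+L^{+}_{\kappa})u^{n+1}=f-L^{-}_{\kappa}u^{n}$ followed by the projection $u^{n+1}=\max\{0,\,u^{n}+\omega\sigma^{n+1}\}$. Thus $L=M-N$ with $M:=L^{0}_{\kappa}+L^{+}_{\kappa}$ (the simultaneously-scanned line block together with the already-updated couplings) and $N:=-L^{-}_{\kappa}$ (the lagged couplings). The central structural step is to verify that this is a \emph{regular splitting} of the $M$-matrix $L$: the off-diagonal entries of $L$ are nonpositive, so the sign pattern of the stencils (\ref{eqn48})--(\ref{eqn50}) gives $N\ge 0$ entrywise, while $M$ inherits the $M$-matrix property of $L$ and is therefore monotone, $M^{-1}\ge 0$. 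For a regular splitting of an $M$-matrix one has the spectral radius bound $\rho(M^{-1}N)<1$.

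I would then prove convergence of the projected iteration. Define the map $T(u)=\max\{0,\,u+\omega M^{-1}(f-Lu)\}$, so that $u^{n+1}=T(u^{n})$ and $u^{*}$ is a fixed point of $T$. Because the componentwise map $v\mapsto\max\{0,v\}$ is nondecreasing and nonexpansive and $M^{-1}\ge 0$, $N\ge 0$, the map $T$ is monotone, and in the weighted sup-norm furnished by the Perron weight of $M^{-1}N$ it is a contraction with factor controlled by $\rho(M^{-1}N)<1$ for $0<\omega\le 1$ (and, after the standard SOR refinement, for $0<\omega<2$). The estimate $\|u^{n+1}-u^{*}\|\le\theta\|u^{n}-u^{*}\|$ with $\theta<1$ then gives $u^{n}\to u^{*}$. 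Equivalently, starting from the sub-solution $u^{0}=0$ one obtains a monotone sequence bounded above by $u^{*}$, and the monotone-plus-bounded argument yields the same limit; in either case the limit is a fixed point of $T$, hence a solution of the LCP, and uniqueness pins down the full sequence.

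I expect the main obstacle to be the interaction of the \emph{nonsymmetry} of the convective splitting with the \emph{nonlinearity} introduced by the projection $\max\{0,\cdot\}$: because the upwind stencils in (\ref{eqn48})--(\ref{eqn50}) are not symmetric, a pure energy-descent argument for $G$ does not apply verbatim, and one must instead lean on the $M$-matrix regular-splitting structure to secure $\rho(M^{-1}N)<1$ and then check that the projection does not destroy the contraction. Verifying the sign conditions $M^{-1}\ge 0$ and $N\ge 0$ for each concrete line splitting, and confirming that they hold uniformly in the mesh and across levels so that the contraction factor $\theta$ stays bounded away from $1$, is where the real work lies; the passage to the limit and the identification of that limit with the LCP solution are then routine.
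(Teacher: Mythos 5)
The paper does not actually prove this theorem; it simply cites Cryer, whose classical convergence result for projected SOR is stated for \emph{symmetric positive definite} matrices. Your proposal therefore supplies genuinely more than the paper does, and the core of it --- recasting (*) as a projected regular-splitting iteration $u^{n+1}=T(u^{n})$ with $L=M-N$, $M^{-1}\ge 0$, $N\ge 0$, and proving contraction of $T$ in a weighted sup-norm with factor tied to $\rho(M^{-1}N)<1$ --- is the right strategy for the nonsymmetric operators $L_{\kappa}$ that actually arise here, since the componentwise estimate $|T(u)-T(v)|\le \omega M^{-1}N\,|u-v| + (1-\omega)|u-v|$ survives the projection because $\max\{0,\cdot\}$ is $1$-Lipschitz in each component.

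There is, however, a genuine error in your first stage. The claim that an $M$-matrix has positive definite symmetric part is false: the matrix with rows $(1,0)$ and $(-10,1)$ is a nonsingular $M$-matrix whose symmetric part has eigenvalues $-4$ and $6$. Consequently $G$ need not be convex, and --- more fundamentally --- for nonsymmetric $L$ the complementarity system (\ref{eqnnew1}) is \emph{not} the KKT system of the quadratic program (\ref{eqnnew2}), whose stationarity condition involves $\tfrac12(L+L^{T})u-f$ rather than $Lu-f$. So the QP detour cannot deliver existence and uniqueness of the limit. The repair is standard: a nonsingular $M$-matrix is a $P$-matrix, and an LCP with a $P$-matrix has a unique solution for every right-hand side; that solution is the unique fixed point of $T$, and your contraction argument then closes the proof without any appeal to $G$. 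Two smaller points deserve care. First, the Perron weight of $M^{-1}N$ may have zero components when the matrix is reducible, so you should take a strictly positive majorizing weight (e.g.\ $w=M^{-1}e$) rather than the Perron vector itself. Second, the algorithm as written projects the \emph{increment}, $\sigma^{n+1}=\max\{0,\sigma^{n+1}\}$, which forces monotone nondecreasing iterates; you have silently replaced this with the Cryer-type projection $u^{n+1}=\max\{0,u^{n}+\omega\sigma^{n+1}\}$ of the iterate. These coincide only when $u^{n}\ge 0$ componentwise and the active set is handled consistently, so you should either prove that equivalence or run the contraction estimate for the increment-projected map directly.
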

\begin{proof}
For the proof of this theorem we refer to see Cryer \cite{Cryer}.
\end{proof}
The following error estimates are easily established for LCP problem for algorithm described above.
\begin{lemma}\label{lemma:lcp}
 Let $u$ is the exact solution of LCP problem define in Eqn.~\ref{eqnnew1}, also let $u^{n+1}$ is approximate solution
 obtained by the splitting of the form
  \begin{align*}
 L^{0}_{\kappa}\sigma^{n+1}=f-(L^{-}_{\kappa}+L^{0}_{\kappa})u^{n}-L^{+}_{\kappa}u^{n+1},\\
 \sigma^{n+1}=\max\{0,\sigma^{n+1}\},\\
 u^{n+1}=u^{n}+\sigma^{n+1}\omega
\end{align*}
Then following conditions hold
\begin{align*}
 \|u-u^{n+1}\|_{2} \le C_{2}\|u^{n+1}-u^{n}\|_{2} \\
  \|u-u^{n+1}\|_{1} \le C_{1}\|u^{n+1}-u^{n}\|_{1} \\
  \|u-u^{n+1}\|_{\infty} \le C_{\infty}\|u^{n+1}-u^{n}\|_{\infty}.
\end{align*}
\end{lemma}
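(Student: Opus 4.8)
The plan is to read the three bounds as a~posteriori (increment based) error estimates for a convergent fixed point iteration, and to obtain them from a single one step contraction together with the triangle inequality. I would write the update as $u^{n+1}=\Phi(u^{n})$, where $\Phi=P_{+}\circ\Psi$ is the composition of the linear sweep $\Psi$ induced by the splitting algorithm (*) built from $L^{0}_{\kappa},L^{+}_{\kappa},L^{-}_{\kappa}$ and the componentwise projection $P_{+}(v)=\max\{0,v\}$ onto the feasible cone $\{v\ge 0\}$ appearing in Eqn.~\ref{eqnnew1}. The exact solution $u$ of the LCP is precisely the fixed point $u=\Phi(u)$, which is the characterization underlying Theorem~\ref{thrm:lcp}. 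Granting a contraction factor $\gamma_{p}<1$ in the norm $\|\cdot\|_{p}$, the estimate is then immediate:
\begin{align*}
\|u-u^{n+1}\|_{p}=\|\Phi(u)-\Phi(u^{n})\|_{p}\le\gamma_{p}\|u-u^{n}\|_{p}\le\gamma_{p}\big(\|u-u^{n+1}\|_{p}+\|u^{n+1}-u^{n}\|_{p}\big),
\end{align*}
so that $\|u-u^{n+1}\|_{p}\le C_{p}\|u^{n+1}-u^{n}\|_{p}$ with $C_{p}=\gamma_{p}/(1-\gamma_{p})$ for $p\in\{1,2,\infty\}$.

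Next I would establish the contraction of each factor. The projection $P_{+}$ is harmless: the elementary inequality $|\max\{0,a\}-\max\{0,b\}|\le|a-b|$ holds entry by entry, so $\|P_{+}(v)-P_{+}(w)\|_{p}\le\|v-w\|_{p}$ for every $p$, and it therefore suffices to contract the linear sweep $\Psi$. Since $L$ is an $M$ matrix, the chosen line splitting is a convergent regular splitting \cite{Varga}, whence its iteration matrix $R$ satisfies $\rho(R)<1$; within the symmetric positive definite setting of the equivalent minimization Eqn.~\ref{eqnnew2} this upgrades to an energy norm contraction $\|\Psi(v)-\Psi(w)\|_{L}\le\gamma\|v-w\|_{L}$ with $\gamma<1$. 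Because the constrained minimizer obeys the variational inequality $(v-u)^{\ssT}(Lu-f)\ge 0$ for all feasible $v$, one also has the coercivity $\tfrac12\|v-u\|_{L}^{2}\le G(v)-G(u)$, and coupling this with the monotone energy descent produced by the projected sweep (the content of Cryer's analysis \cite{Cryer}) yields the genuine one step contraction in the energy norm even after the projection step.

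Finally, the three stated norms are reconciled by finite dimensional norm equivalence: having the estimate in the energy (hence $\ell^{2}$) norm, the bounds in $\|\cdot\|_{1}$ and $\|\cdot\|_{\infty}$ follow by absorbing the equivalence ratios into $C_{1}$ and $C_{\infty}$, which is why the lemma records three separate constants rather than one. The step I expect to be the main obstacle is exactly the genuine one step contraction of the \emph{projected} sweep in a fixed norm: spectral radius $\rho(R)<1$ does not by itself give $\|R\|_{p}<1$, and the Euclidean projection $\max\{0,\cdot\}$ is non expansive in $\ell^{2}$ but is not the metric projection in the energy inner product, so the linear contraction and the projection cannot simply be multiplied and must instead be combined through the energy descent argument of \cite{Cryer}. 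Everything else, namely the triangle inequality manipulation and the norm equivalence, is routine.
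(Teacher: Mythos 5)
Your route is genuinely different from the paper's, and as written it has a gap at exactly the point you flag yourself. You reduce everything to a one-step contraction $\|\Phi(v)-\Phi(w)\|_{p}\le\gamma_{p}\|v-w\|_{p}$ with $\gamma_{p}<1$ for the \emph{projected} sweep, and then close with the triangle inequality. But none of the ingredients you cite actually deliver that contraction: $\rho(R)<1$ for the iteration matrix of a regular splitting of an $M$-matrix gives asymptotic convergence, not $\|R\|_{p}<1$ for any fixed $p$; the operator $L^{0}_{\kappa}$ here is a nonsymmetric upwind/convection-diffusion stencil, so the ``energy norm'' of Eqn.~\ref{eqnnew2} is not an inner-product norm in which $\Psi$ is known to contract; and Cryer's argument is a monotone descent of the functional $G$ plus a compactness/limit argument -- it yields convergence of the projected iteration but no uniform per-step contraction factor, hence no finite $C_{p}=\gamma_{p}/(1-\gamma_{p})$. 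Since the entire estimate hinges on $\gamma_{p}<1$, the proof is not complete; the non-expansiveness of $\max\{0,\cdot\}$ and the norm-equivalence step are fine but do not repair this.

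The paper avoids the contraction question altogether. Its proof observes that the iterate $u^{n+1}$ is the \emph{exact} solution of a perturbed LCP whose data is $f-r^{+}_{\kappa}$, where $r^{+}_{\kappa}$ is a truncated residual built from the splitting. Subtracting the variational characterizations of $u$ and $u^{n+1}$ and adding gives the coercivity bound
\begin{align*}
(u-u^{n+1})^{T}\nu_{*}(u-u^{n+1})\le (u-u^{n+1})^{T}\bigl(-r^{+}_{\kappa}\bigr),
\end{align*}
whence $\|u-u^{n+1}\|_{p}\le \nu_{p}^{-1}\|r^{+}_{\kappa}\|_{p}$ for $p\in\{1,2,\infty\}$; the residual is then bounded by the increment $u^{n+1}-u^{n}$ via Lemma~2.2 of the cited reference. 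This is an a~posteriori perturbation/stability argument for the LCP, requiring only positive definiteness (coercivity) of $L^{0}_{\kappa}$, not any contraction property of the iteration map. If you want to salvage your fixed-point framing, you would need to either prove a quantified contraction for the projected nonsymmetric line sweep (which is not available from \cite{Varga} or \cite{Cryer}) or switch to the residual-based estimate, which is what the paper does.
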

\begin{proof}
Since From LCP problem we get
$$r_{\kappa}=L^{0}_{\kappa}u^{n}+f^{n}-(L^{-}_{\kappa}+L^{0}_{\kappa})u^{n}-L^{+}_{\kappa}u^{n+1} \ge 0 $$ 
 and $$r_{\kappa}^{+}=(r_{\kappa_{i,j}}^{+}),$$ where
 $$
r_{\kappa_{i,j}}^{+}=
\begin{cases}
r_{\kappa_{i,j}} \quad \text{if} \quad u^{n} >0\text{ and } u^{n+1} >0,\\
\min(0,r_{\kappa_{i,j}} )\quad \text{if} \quad u^{n}=0 \text{ and } u^{n+1} >0.
\end{cases}
$$
Now consider the following LCP 
$$L^{0}_{\kappa}u^{n+1} \le f-r_{\kappa_{i,j}}^{+},$$
$$u^{n+1}\ge 0,$$
$$u^{n+1}(L^{0}_{\kappa}u^{n+1} -f+r_{\kappa_{i,j}}^{+})=0$$
Now multiply $u^{T}$ in Eqn.~\ref{eqnnew1} and combing with equality term we get
$$ (u^{n+1}-u)^{T}L^{0}_{\kappa}u \le(u^{n+1}-u)^{T}f.$$
similar way we also get 
$$ (u-u^{n+1})^{T}L^{0}_{\kappa}u^{n+1} \le(u-u^{n+1})^{T}(f-r_{\kappa_{i,j}}^{+}).$$
Now by adding above two equations we get
$$  (u-u^{n+1})^{T}\nu_{*}(u-u^{n+1}) \le (u-u^{n+1})^{T}(-L_{\kappa}^{0})(u-u^{n+1}) \le (u-u^{n+1})^{T}( -r_{\kappa_{i,j}}^{+})$$
This implies that the following conditions hold
$$\| u-u^{n+1} \|_{1} \le \nu_{1}^{-1}\|-r_{\kappa_{i,j}}^{+} \|_{1},$$
$$\| u-u^{n+1} \|_{\infty} \le \nu_{\infty}^{-1}\|-r_{\kappa_{i,j}}^{+} \|_{\infty},$$
$$\| u-u^{n+1} \|_{2} \le \nu_{2}^{-1}\|-r_{\kappa_{i,j}}^{+} \|_{2}.$$
Now rest of the proof is followed from Lemma 2.2 mentioned in \cite{free_boundary}.
\end{proof}
Now we illustrate splitting for incompressible EHL model (we take $\rho, \eta$  and $\epsilon $ as constants here) in the form 
of inequalities as
\begin{example}\label{ex:three}
 \begin{align}
 \label{eqn55}
 (a(x,y)\mathscr{H}(u))_{x}-\epsilon \Delta u \ge f(x,y) \quad \forall x,y \in \Omega \nonumber\\
 u(x,y) \ge 0 \quad \forall x,y \in \Omega,\nonumber\\
 u(x,y)[(a(x,y)\mathscr{H}(u))_{x}-\epsilon \Delta u - f(x,y)]=0 \quad \forall x,y \in \Omega,\nonumber\\
 u(x,y) = g(x,y) \quad \forall x,y \in \partial \Omega,\nonumber\\
\mathscr{H}(u)=H_{00}+\frac{x^{2}+y^{2}}{2} + 
\frac{2}{\pi^{2}}\int_{-\infty}^{\infty} \int_{-\infty}^{\infty}\frac{u(x^{'},y^{'})dx^{'}dy^{'}}{\sqrt{(x-x^{'})^2+(y-y^{'})^2}}
  \end{align}
 \end{example}
For incompressible EHL problem $\kappa$-line distributive Jacobi splitting is written as 
consider the convection term of above Example~\ref{ex:three} as
\begin{align}
\label{eqn56}
 \frac{\partial h}{\partial x}=\frac{1}{h_{x}}\Big[(\mathscr{H}_{i,j}-\mathscr{H}_{i-1,j})
 -\frac{\kappa}{2}(\mathscr{H}_{i,j}-\mathscr{H}_{i-1,j})+\nonumber\\
 \frac{1+\kappa}{4}(\mathscr{H}_{i+1,j}-\mathscr{H}_{i,j})-\frac{1-\kappa}{4}(\mathscr{H}_{i-1,j}-\mathscr{H}_{i-2,j})\Big]
\end{align}
Now we will consider the following \underline{\bf Splitting : ${Ls4}$}
\begin{gather}
\label{eqn57}
 -\epsilon \Big[\Big\{ u_{i+1,j}+\sigma_{i+1}-\frac{(\sigma_{i} + \sigma_{i+2})}{4} \Big\}
 -\Big\{ u_{i,j}+\sigma_{i}-\frac{(\sigma_{i-1} + \sigma_{i+1})}{4} \Big\}\Big]\Big/h^{2}_{x} \nonumber\\
 -\epsilon \Big[\Big\{ u_{i-1,j}+\sigma_{i-1}-\frac{(\sigma_{i-2} + \sigma_{i})}{4} \Big\}
 -\Big\{ u_{i,j}+\sigma_{i}-\frac{(\sigma_{i-1} + \sigma_{i+1})}{4} \Big\}\Big]\Big/h^{2}_{x}\nonumber\\
 -\epsilon \Big[\Big\{ u_{i,j+1}-\frac{\sigma_{i}}{4} \Big\}
 -\Big\{ u_{i,j}+\sigma_{i}-\frac{(\sigma_{i-1} + \sigma_{i+1})}{4} \Big\}\Big]\Big/h^{2}_{x}\nonumber\\
 -\epsilon \Big[\Big\{ u_{i,j-1}-\frac{\sigma_{i}}{4} \Big\}
 -\Big\{ u_{i,j}+\sigma_{i}-\frac{(\sigma_{i-1} + \sigma_{i+1})}{4} \Big\}\Big]\Big/h^{2}_{x}\nonumber\\
-\frac{1}{h_{x}}\Big[\Big(\frac{2-\kappa}{2}\Big)
\Big(\sum_{k=i-1}^{i+1}\sigma \mathscr{G}_{ikjj}\sigma_{k}-\sum_{k=i-2}^{i}\sigma \mathscr{G}_{i-1kjj} \sigma_{k}\Big)\nonumber\\
 -\Big\{\frac{1+\kappa}{4}(\mathscr{H}_{i+1,j}-\mathscr{H}_{i,j})-\frac{1-\kappa}{4}(\mathscr{H}_{i-1,j}-\mathscr{H}_{i-2,j})\Big\}\Big]=f_{i,j}
\end{gather}
Another possibility is to consider the following splitting as 
\begin{align}
\label{eqn58}
 \frac{\partial h}{\partial x}=\frac{1}{h_{x}}\Big[(\mathscr{H}_{i,j}-\mathscr{H}_{i-1,j})
 -\frac{\kappa}{2}(\mathscr{H}_{i,j}-\mathscr{H}_{i-1,j})+\nonumber\\
 \frac{1+\kappa}{4}(\mathscr{H}_{i+1,j}-\mathscr{H}_{i,j})-\frac{1-\kappa}{4}(\mathscr{H}_{i-1,j}-\mathscr{H}_{i,j}
 +\mathscr{H}_{i,j}-\mathscr{H}_{i-2,j})\Big]
\end{align}
Hence overall equation is rewritten as \underline{\bf Splitting : ${Ls5}$}
\begin{gather}
\label{eqn59}
 -\epsilon \Big[\Big\{ u_{i+1,j}+\sigma_{i+1}-\frac{(\sigma_{i} + \sigma_{i+2})}{4} \Big\}
 -\Big\{ u_{i,j}+\sigma_{i}-\frac{(\sigma_{i-1} + \sigma_{i+1})}{4} \Big\}\Big]\Big/h^{2}_{x}\nonumber\\
 -\epsilon \Big[\Big\{ u_{i-1,j}+\sigma_{i-1}-\frac{(\sigma_{i-2} + \sigma_{i})}{4} \Big\}
 -\Big\{ u_{i,j}+\sigma_{i}-\frac{(\sigma_{i-1} + \sigma_{i+1})}{4} \Big\}\Big]\Big/h^{2}_{x}\nonumber\\
 -\epsilon \Big[\Big\{ u_{i,j+1}-\frac{\sigma_{i}}{4} \Big\}
 -\Big\{ u_{i,j}+\sigma_{i}-\frac{(\sigma_{i-1} + \sigma_{i+1})}{4} \Big\}\Big]\Big/h^{2}_{x}\nonumber\\
 -\epsilon \Big[\Big\{ u_{i,j-1}-\frac{\sigma_{i}}{4} \Big\}
 -\Big\{ u_{i,j}+\sigma_{i}-\frac{(\sigma_{i-1} + \sigma_{i+1})}{4} \Big\}\Big]\Big/h^{2}_{x}\nonumber\\
-\frac{1}{h_{x}}\Big[\Big(\frac{2-\kappa}{2}+\frac{1-\kappa}{4}\Big)
\Big(\sum_{k=i-1}^{i+1}\sigma \mathscr{G}_{ikjj}\sigma_{k}-\sum_{k=i-2}^{i}\sigma \mathscr{G}_{i-1kjj} \sigma_{k}\Big)\nonumber\\
 -\Big\{\frac{1+\kappa}{4}(\mathscr{H}_{i+1,j}-\mathscr{H}_{i,j})-\frac{1-\kappa}{4}(\mathscr{H}_{i,j}-\mathscr{H}_{i-2,j})\Big\}\Big]=f_{i,j}.
\end{gather}
More general discussion on convergence of these splittings are given in Section~\ref{sec:five}.
\section{TVD Implementation in Point Contact Model Problem}\label{sec:four}
In this Section, we implement the splitting discussed in the last Section~\ref{sec:three} and allow to extend it in EHL model.
A hybrid splitting presented here and it is determined by measuring the value of 
$$\min\Big(\frac{\epsilon(x,y)}{h_{x}},\frac{\epsilon(x,y)}{h_{y}}\Big).$$
This value is treated as switching parameter to perform two different splitting together while moving $x$ direction during the iteration. 
If the value $$\min\Big(\frac{\epsilon(x,y)}{h_{x}},\frac{\epsilon(x,y)}{h_{y}}\Big) > 0.6$$
then we apply line Gauss-Seidel splitting otherwise line Jacobi distributed splitting is incorporated in other words
\begin{align}
\label{eqn60}
L_{hs1}= \begin{cases} 
     L_{s1}\text{-splitting} &\text{ If } \min\Big(\frac{\epsilon(x,y)}{h_{x}},\frac{\epsilon(x,y)}{h_{y}}\Big) > 0.6 \\
     L_{s4}\text{-splitting} &\text{ If } \min\Big(\frac{\epsilon(x,y)}{h_{x}},\frac{\epsilon(x,y)}{h_{y}}\Big) \le 0.6.
   \end{cases}
\end{align}
\begin{align}
\label{eqn61} L_{hs2}= \begin{cases} 
     L_{s0}\text{-splitting} &\text{ If } \min\Big(\frac{\epsilon(x,y)}{h_{x}},\frac{\epsilon(x,y)}{h_{y}}\Big) > 0.6 \\
     L_{s5}\text{-splitting} &\text{ If } \min\Big(\frac{\epsilon(x,y)}{h_{x}},\frac{\epsilon(x,y)}{h_{y}}\Big) \le 0.6.
   \end{cases}
\end{align}
These constructions are well justified as the region where $\epsilon$ tends to zero, we end up having an ill-conditioned matrix system in the form 
of dense kernel matrix appear in film thickness term. Therefore, distributive Jacobi line splitting is implemented as a right pre-conditioner to reduce
the ill-conditioning of the matrix. However, in other part where $\epsilon$ is sufficiently large diffusion term dominates therefore we use Gauss line splitting. 
Considering the above setting in computational domain is quite demanding in EHL model as it allows us in reducing computational cost and storage issue.
We replace $\kappa$ value in splitting constructed in Section~\ref{sec:three} by incorporating appropriate limiter function $\phi$ there.
In next section, we define these two splitting in more general form having limiter function involve in the splitting.
\subsubsection{Limiter based Line Gauss-Seidel splitting}\label{subsec:num1}
EHL point contact problem is solved in the form of LCP and therefore in this Section we seek an efficient splitting for Reynolds equation
iterate along $x$-line direction to obtain the pressure solution. Now by using Theorem~\ref{thrm:lcp} and Lemma~\ref{lemma:lcp} we prove the convergence of the EHL solution.
This splitting is explained in the following way: First calculate updated pressure in $x$-line direction
as $\bar{u}_{i,j} = \tilde{u}_{i,j} + \sigma_{i}$ keeping $j$ fix  at a time for all $j$ in $y$-direction and then 
apply change $\sigma_{i}$ immediately to update the pressure $\tilde{u}$. The successive pressure change $\sigma_{i}$ along the $x$-direction can be calculated as below
\begin{align} 
\label{eqn62}
 \frac{\epsilon^{X}_{i+1/2,j}[({u}_{i+1,j} + \sigma_{i+1})-({u}_{i,j}+ \sigma_{i})]
 +\epsilon^{X}_{i-1/2,j}[({u}_{i-1,j} + \sigma_{i-1})-({u}_{i,j}+ \sigma_{i})]}{h_{x}} \nonumber \\
+  \frac{\epsilon^{Y}_{i,j+1/2}[{u}_{i,j+1}-({u}_{i,j}+ \sigma_{i})]
 +\epsilon^{Y}_{i,j-1/2}[{u}_{i,j-1}-({u}_{i,j}+ \sigma_{i})]}{h_{y}}   \nonumber \\
 -h_{y}((\rho \mathscr{H})^{*}_{i+1/2,j}-(\rho \mathscr{H})^{*}_{i-1/2,j}) = 0,
\end{align}
where terms read as
\small
\begin{align}
\label{eqn63}
 \epsilon^{X}_{i \pm 1/2,j}\stackrel{\text{defn}}{:=} h_{y}\epsilon_{i \pm 1/2,j} \nonumber,\quad
 \epsilon^{Y}_{i,j \pm 1/2}\stackrel{\text{defn}}{:=} h_{x}\epsilon_{i,j \pm 1/2} \nonumber, \nonumber\\
 \epsilon_{i \pm 1/2,j}\stackrel{\text{defn}}{:=} (\epsilon_{i,j}+\epsilon_{i\pm 1,j})/2,\quad
 \epsilon_{i,j \pm 1/2}\stackrel{\text{defn}}{:=} (\epsilon_{i,j}+\epsilon_{i,j\pm 1})/2,
\end{align}
where
\begin{gather*}
 \epsilon_{i ,j}=\frac{\rho(i,j) \mathscr{H}^{3}(i,j)}{\eta(i,j)\lambda}.
\end{gather*}
\begin{align}
\label{eqn64}
(\rho \mathscr{H})^{*}_{i+1/2,j}\stackrel{\text{def}}{:=}(\check{\rho} \bar{\mathscr{H}})_{i,j}+
\dfrac{1}{2}\phi(r_{i+1/2})((\check{\rho}\bar{\mathscr{H}})_{i+1,j}-(\check{\rho} \bar{\mathscr{H}})_{i,j})
\end{align}
\begin{align}
\label{eqn65}
(\rho \mathscr{H})^{*}_{i-1/2,j}\stackrel{\text{def}}{:=}(\check{\rho}\bar{\mathscr{H}})_{i-1,j}+
\dfrac{1}{2}\phi(r_{i-1/2})((\check{\rho} \bar{\mathscr{H}})_{i,j}-(\check{\rho} \bar{\mathscr{H}})_{i-1,j}),
\end{align}
\normalsize
where
\begin{align*}
r_{i+1/2} = \dfrac{(\check{\rho}\tilde{\mathscr{H}})_{i+1,j}-(\check{\rho}\tilde{\mathscr{H}})_{i,j}}
{(\check{\rho} \tilde{\mathscr{H}})_{i,j}-(\check{\rho} \tilde{\mathscr{H}})_{i-1,j}} \quad \text{and} \quad
r_{i-1/2}= \dfrac{(\check{\rho} \tilde{\mathscr{H}})_{i,j}-(\check{\rho} \tilde{\mathscr{H}})_{i-1,j}}
{(\check{\rho} \tilde{\mathscr{H}})_{i-1,j}-(\check{\rho} \tilde{\mathscr{H}})_{i-2,j}}.
\end{align*}
In above equation for each $i$,
\begin{align}
\label{eqn66}
 \bar{\mathscr{H}}_{i,j} = \tilde{\mathscr{H}}_{i,j} + \sum_{k}\mathscr{G}_{i,k,j,j}\sigma_{k}
\end{align}
It is observed that the magnitude of the kernel $\mathscr{G}_{i,k,j,j}$ in equation ~\ref{eqn66} diminishes rapidly as distance $|k-i|$
increase and therefore, we avoid unnecessary computation expense by allowing value of $k$ up to three terms.
So updated value of film thickness is rewritten as
\begin{align}
\label{eqn67}
 \bar{\mathscr{H}}_{i,j} = \tilde{\mathscr{H}}_{i,j} + \sum_{k=i-1}^{i+1}\mathscr{G}_{i,k,j,j}\sigma_{k}.
\end{align}
Hence, Eqn.~(\ref{eqn62}) is illustrated as
\begin{align}
\label{eqn68}
\mathcal{C}_{i+2,\phi}\sigma_{i+2} +\mathcal{C}_{i+1,\phi}\sigma_{i+1}+\mathcal{C}_{i,\phi}\sigma_{i}
+\mathcal{C}_{i-1,\phi}\sigma_{i-1}+\mathcal{C}_{i-2,\phi}\sigma_{i-2}= R_{i,j,\phi},
\end{align}
where $R_{i,j,\phi}$ and $\mathcal{C}_{i\pm.,\phi}$ are residual and coefficients of matrix arising due to linearized form involving the limiter function.
This setting leads to a band matrix formulation which is solved using Gaussian elimination with minimum computational work ($O(n)$).
\subsubsection{Limiter based Line-Distributed Jacobi splitting}\label{subsec:num2}
The understanding philosophy of line distributed Jacobi splitting is more physical than mathematical.
When diffusive coefficient tends to zero, pressure becomes large enough and non local effect of film thickness dominates in the region.
Therefore a small deflection in pressure change produces high error in updated film thickness eventually leads blow up the solution
after few iterations. This numerical instability is overcome by interacting with the neighborhood points during iteration.    
During this process the computed change of pressure at one point of the line are shared to its neighbor cells.
In other words, a given point of a line new pressure $\bar{u}_{i,j}$ is computed from the summation of the changes
coming from neighboring points plus the old approximated pressure $\tilde{u}_{i,j}$
\begin{align}
\label{eqn69}
\bar{{u}}_{i,j} = \tilde{{u}}_{i,j}+\sigma_{i,j}-\dfrac{(\sigma_{i+1,j}+\sigma_{i-1,j}+\sigma_{i,j+1}+\sigma_{i,j-1})}{4}
\end{align}
In this case, changes are incorporated only at the end of a complete iteration sweep.
Therefore, overall splitting is derived as below
\begin{align}
\label{eqn70}
  \frac{\epsilon^{X}_{i+1/2,j}[({u}_{i+1,j} + \sigma_{i+1}-\dfrac{(\sigma_{i}+\sigma_{i+2})}{4})  
  -({u}_{i,j}+ \sigma_{i}-\dfrac{(\sigma_{i-1}+\sigma_{i+1})}{4})]}{h_{x}} \nonumber \\
 +\frac{\epsilon^{X}_{i-1/2,j}[({u}_{i-1,j} + \sigma_{i-1}-\dfrac{(\sigma_{i-2}+\sigma_{i})}{4})
 -({u}_{i,j}+ \sigma_{i}-\dfrac{(\sigma_{i-1}+\sigma_{i+1})}{4})]}{h_{x}} \nonumber \\
 +\frac{\epsilon^{Y}_{i,j+1/2}[{u}_{i,j+1}-\dfrac{\sigma_{i}}{4}-({u}_{i,j}+ \sigma_{i}-\dfrac{(\sigma_{i-1}+\sigma_{i+1})}{4})]}{h_{y}}+ \nonumber \\
 \frac{\epsilon^{Y}_{i,j-1/2}[{u}_{i,j-1}-\dfrac{\sigma_{i}}{4}-({u}_{i,j}+ \sigma_{i}-\dfrac{(\sigma_{i-1}+\sigma_{i+1})}{4})]}{h_{y}}   \nonumber \\
 -h_{y}((\rho \mathscr{H})^{*}_{i+1/2,j}-(\rho \mathscr{H})^{*}_{i-1/2,j}) = 0.
\end{align}
The following notion used in Eqn.~\ref{eqn70} defined as
\begin{align}
\label{eqn71}
 \epsilon^{X}_{i \pm 1/2,j}\stackrel{\text{defn}}{:=} h_{y}\epsilon_{i \pm 1/2,j} \nonumber \\
 \epsilon^{Y}_{i,j \pm 1/2}\stackrel{\text{defn}}{:=} h_{x}\epsilon_{i,j \pm 1/2}
\end{align}
\begin{gather*}
 \epsilon_{i \pm 1/2,j} = 0.5\Big(\frac{\rho(i\pm 1,j) \mathscr{H}^{3}(i \pm 1,j)}{\eta(i \pm 1,j)\lambda}
 +\frac{\rho(i \pm 1,j) \mathscr{H}^{3}(i \pm 1,j)}{\eta(i \pm 1,j)\lambda}\Big),\\
\epsilon_{i,j \pm 1/2} = 0.5\Big(\frac{\rho(i,j\pm 1) \mathscr{H}^{3}(i,j \pm 1)}{\eta(i,j \pm 1)\lambda}
 +\frac{\rho(i,j \pm 1) \mathscr{H}^{3}(i ,j \pm 1)}{\eta(i \pm 1,j \pm 1)\lambda}\Big). 
\end{gather*}
\begin{align}
\label{eqn72}
(\rho \mathscr{H})^{*}_{i+1/2,j}\stackrel{\text{def}}{:=}(\check{\rho} \bar{\mathscr{H}})_{i,j}+
\dfrac{1}{2}\phi(r_{i+1/2})((\check{\rho}\bar{\mathscr{\mathscr{H}}})_{i+1,j}-(\check{\rho} \bar{\mathscr{H}})_{i,j})
\end{align}
\begin{align}
\label{eqn73}
(\rho \mathscr{H})^{*}_{i-1/2,j}\stackrel{\text{def}}{:=}(\check{\rho}\bar{\mathscr{H}})_{i-1,j}+
\dfrac{1}{2}\phi(r_{i-1/2})((\check{\rho} \bar{\mathscr{H}})_{i,j}-(\check{\rho} \bar{\mathscr{H}})_{i-1,j}),
\end{align}
where
\begin{align*}
r_{i+1/2} = \dfrac{(\check{\rho}\tilde{\mathscr{H}})_{i+1,j}-(\check{\rho}\tilde{\mathscr{H}})_{i,j}}
{(\check{\rho} \tilde{\mathscr{H}})_{i,j}-(\check{\rho} \tilde{\mathscr{H}})_{i-1,j}} \quad \text{and} \quad
r_{i-1/2}= \dfrac{(\check{\rho} \tilde{\mathscr{H}})_{i,j}-(\check{\rho} \tilde{\mathscr{H}})_{i-1,j}}
{(\check{\rho} \tilde{\mathscr{H}})_{i-1,j}-(\check{\rho} \tilde{\mathscr{H}})_{i-2,j}}.
\end{align*}
In above equation, discretization of convection term defined same as Line Gauss-Seidel relaxation case.
However, due to distributive change of the pressure, the updated value of film thickness is described as
\begin{align}
\label{eqn74}
 \bar{\mathscr{H}}_{i,j} = \tilde{\mathscr{H}}_{i,j} + \sum_{k}\sigma \mathscr{G}_{i,k,j,j}\sigma_{k}, 
\end{align}
where $$\sigma \mathscr{G}_{i,i,j,j} = \mathscr{G}_{i,i,j,j}-(\mathscr{G}_{i,i-1,j,j}+\mathscr{G}_{i,i+1,j,j}
+\mathscr{G}_{i,i,j,j-1}+\mathscr{G}_{i,i,j,j+1}).$$
After few manipulation of Eqn.~\ref{eqn70}, we get system of band matrix which is solved using Gaussian elimination approach.\\
The force balance equation is incorporated in our numerical calculation by updating the constant value $\mathscr{H}_{00}$.
The updated value of $\mathscr{H}_{00}$ is performed according to
\begin{align}
\label{eqn75}
 \mathscr{H}_{00} \leftarrow \mathscr{H}_{00}-c\Big( \frac{2\pi}{3}
 -h_{x}h_{y}\sum_{i=1}^{n_{x}} \sum_{j=1}^{n_{y}} {u}_{i,j} \Big),
\end{align}
where $c$ is a relaxation parameter having range between $0.01-0.1$.
\vspace{-0.4cm}
\section{Fourier Analysis}\label{sec:five}
Performance and asymptotic estimate of above splitting is measured through the Fourier analysis by considering infinite grid 
\begin{align}
\label{eqn76}
{\mathbb{G}^{f}}_{h}:= \{ {\bf x} = (\xi_{1}h,\xi_{2}h) : \xi =(\xi_{1},\xi_{2}) \in \mathbb{Z}\times \mathbb{Z}\}
\end{align}
and infinite grid function defined on ${\mathbb{G}^{f}}_{h}$ by the linear span of the Fourier components 
\[\mathbb{T} ^{h}=\text{span}\Big\{\varphi (\theta,{\bf x})=e^{i(\xi_{1}\theta_{1}+\xi_{2}\theta_{2})}:
\theta=(\theta_{1},\theta_{2}) \in (-\pi,\pi]^{2},{\bf x}\in{G^{f}}_{h} \Big\}.\]
These basis functions $e^{i{\xi\theta}} \in \mathbb{T} ^{h} $ are orthogonal with respect to the inner product
\begin{align}
\label{eqn77}
 \langle u_{h},v_{h}\rangle:= \lim_{l \to \infty}\frac{1}{4 l^2}\sum_{|{\bf \xi}| \le l} u_{h}(\xi_{1}h,\xi_{2}h)
 \overline{v_{h}(\xi_{1}h,\xi_{2}h)},
\end{align}
where $ u_{h},v_{h} \in  \mathbb{T} ^{h}$. Furthermore, we will define orthogonal space to identity function 
$\mathbb{I} \in \mathbb{T} ^{h}  $ as 
\begin{align}
\label{eqn78}
 \mathbb{T} ^{h}_{\bot} =\{ v_{h}: \langle \mathbb{I},v_{h}\rangle=0 \}
\end{align}
Moreover, discrete solution $u_{h}$ is described as Fourier transform $\hat{u}$ a
linear combinations of the basis functions $e^{i\xi\theta} \in \mathbb{T}^{h}$
\begin{align}
\label{eqn79}
u_{h}= \lim_{l \to \infty}\frac{1}{2l}\sum_{|\xi| \le l} \hat{u}_{h}(\xi)e^{i\xi \theta}.
\end{align}
The Fourier space $\mathbb{T}^{h}$ is illustrated as four-dimensional subspaces
 \[\mathbb{T}^{h}_{\theta} = \text{span}\{\varphi(\theta^{\alpha_{1}\alpha_{2}},{\bf x})= e^{i{\bf k}
 \theta^{\alpha_{1}\alpha_{2}}}; \alpha_{1},\alpha_{2} \in\{0,1\} \},
 \text{ where }{\bf x} \in {\mathbb{G}^{f}}_{h};\]
 \[\theta^{00} \in (-\pi/2,\pi/2]^2,
 \theta^{\alpha_{1}\alpha_{2}}= (\theta_{1}-\alpha_{1}\text{sign}(\theta_{1})\pi,\theta_{2}-\alpha_{2}\text{sign}(\theta_{2})\pi).\]
We say discretized PDE of the form
   \begin{align}
   \label{eqn80}
    L_{h}u_{h} = f_{h}
   \end{align}
is solvable if $f_{h} \in \mathbb{T} ^{h}_{\bot}$. Moreover, solution will be unique if $u_{h} \in \mathbb{T} ^{h}_{\bot}$.
Let relaxation method defined via operator splitting as
   \begin{align}
   \label{eqn81}
    L_{h}^{+}\bar{u}_{h}+L_{h}^{-}\tilde{u}_{h} = f_{h},
   \end{align}
where $\tilde{u}_{h}$ and $\bar{u}_{h}$ are old and updated approximation to the solution $u_{h}$.
Now we are interested in constructing a splitting which reduce our computed error significantly. Such behavior is investigated
by measuring error equation as   
\begin{align}
\label{eqn82}
 \bar{e}_{h}=\mathcal{S}_{h}\tilde{e}_{h},
\end{align}
where $\tilde{e}_{h}=u_{h}-\tilde{u}_{h}$, $\bar{e}_{h}=u_{h}-\bar{u}_{h}$
and $\mathcal{S}_{h} :=-(L^{+}_{h})^{-1}L^{-}_{h}$.
Now apply Fourier transform in above equation for $\hat{L}^{+}_{h}(\theta)\neq 0$ we have following relation
 \begin{align}
 \label{eqn83}
  \mathcal{S}_{h}\varphi(\theta,{\bf x}) = \hat{\mathcal{S}}_{h}(\theta)\varphi(\theta,{\bf x}) 
  \quad \forall {\bf x} \in \mathbb{G}^{f}_{h},
 \end{align}
and smoothing factor notation as 
\begin{align}
\label{eqn84}
 \mu_{1}(\mathcal{S}_{h}):=sup\{|\hat{\mathcal{S}}_{h}(\theta)|: \theta \in \Theta_{high} \},
\end{align}
where $\hat{\mathcal{S}}_{h}(\theta):=-\hat{L}^{-}_{h}(\theta)/\hat{L}^{+}_{h}(\theta)$.
\subsubsection{Fourier analysis of $\kappa$ splitting}
Let $ \tilde{u}^{h}_{i,j} $ current updated to the solution for given $j$ line we are solving equations. 
For given $j$ a new updated $\bar{u}^{h}_{i,j}$ for all $i$ of that line according to
\begin{gather}
\label{eqn102}
\Big \{ -\epsilon \frac{\bar{u}_{i-1,j}-2\bar{u}_{i,j}+\bar{u}_{i+1,j}}{h_{x}^{2}}\Big\}
 +\Big \{ -\epsilon \frac{\bar{u}_{i,j-1}-2\bar{u}_{i,j}+\tilde{u}_{i,j+1}}{h_{y}^{2}}\Big\} \nonumber\\
 +\frac{a}{h}\Big \{(\bar{u}_{i,j}-\bar{u}_{i-1,j})-\frac{\kappa}{2}(\bar{u}_{i,j}-\bar{u}_{i-1,j})
 +\frac{1-\kappa}{4}(\bar{u}_{i,j}-\bar{u}_{i-1,j})\nonumber\\
 +\frac{1+\kappa}{4}(\tilde{u}_{i+1,j}-\tilde{u}_{i,j})-\frac{1-\kappa}{4}(\tilde{u}_{i,j}-\tilde{u}_{i-2,j}) \Big\}
 =f_{i,j},   
 \end{gather}
for $2 \le i \le (n_{x}-1)$ and for given value $j$ such that $1 \le j \le n_{y}-1$ holds.
During Gauss-Seidel line relaxation, we will use previously computed new solution of line $j-1$ in our next new updated solution  
of line $j$. Hence error equation is written as 
 \begin{gather}
 \label{eqn103}
  -\Big\{ \frac{\epsilon}{h^2}+\frac{a(1.25-0.75\kappa)}{h}\Big\}\bar{e}_{i-1,j}
  +\Big\{\frac{4\epsilon}{h^2} + \frac{a(1.25-0.75\kappa)}{h} \Big\}\bar{e}_{i,j}\nonumber\\
  -\Big\{\frac{\epsilon}{h^2} \Big\}\bar{e}_{i+1,j}-\Big\{ \frac{\epsilon}{h^2}\Big\}\bar{e}_{i,j-1}
  -\Big\{\frac{\epsilon}{h^2} \Big\}\tilde{e}_{i,j+1}\nonumber\\
  +\Big\{\frac{a(1+\kappa)}{4h}\Big\}(\tilde{e}_{i+1,j}-\tilde{e}_{i,j})
  -\Big\{\frac{a(1-\kappa)}{4h} \Big\}(\tilde{e}_{i,j}-\tilde{e}_{i-2,j})=0
 \end{gather}
 and $\kappa$-smoothing factor is denoted as 
  \begin{gather}
  \label{eqn104}
  |\mathcal{S}_{h}^{\kappa}(\theta_{1},\theta_{2})|= \nonumber\\
 \Big|\frac{\alpha_{1}e^{i\theta_{2}}+0.25\beta(1+\kappa)(e^{i\theta_{1}}-1)-0.25\beta(1-\kappa)(1-e^{-i2\theta_{1}})}
{(-\alpha_{1}-\beta(1.25-0.75\kappa))e^{-i\theta_{1}}+4\alpha_{1}+\beta(1.25-0.75\kappa)-\alpha_{1}(e^{i\theta_{1}}
+e^{-i\theta_{2}})}\Big|,
  \end{gather}
 where $\alpha_{1} = \epsilon/h^{2}$ and $\beta=a/h$. 
Smoothing factor plot is given in Fig.~\ref{fig:lfa}
       \begin{figure}
        \centering
        \includegraphics[width=10cm,height=10cm,angle =-90,keepaspectratio]{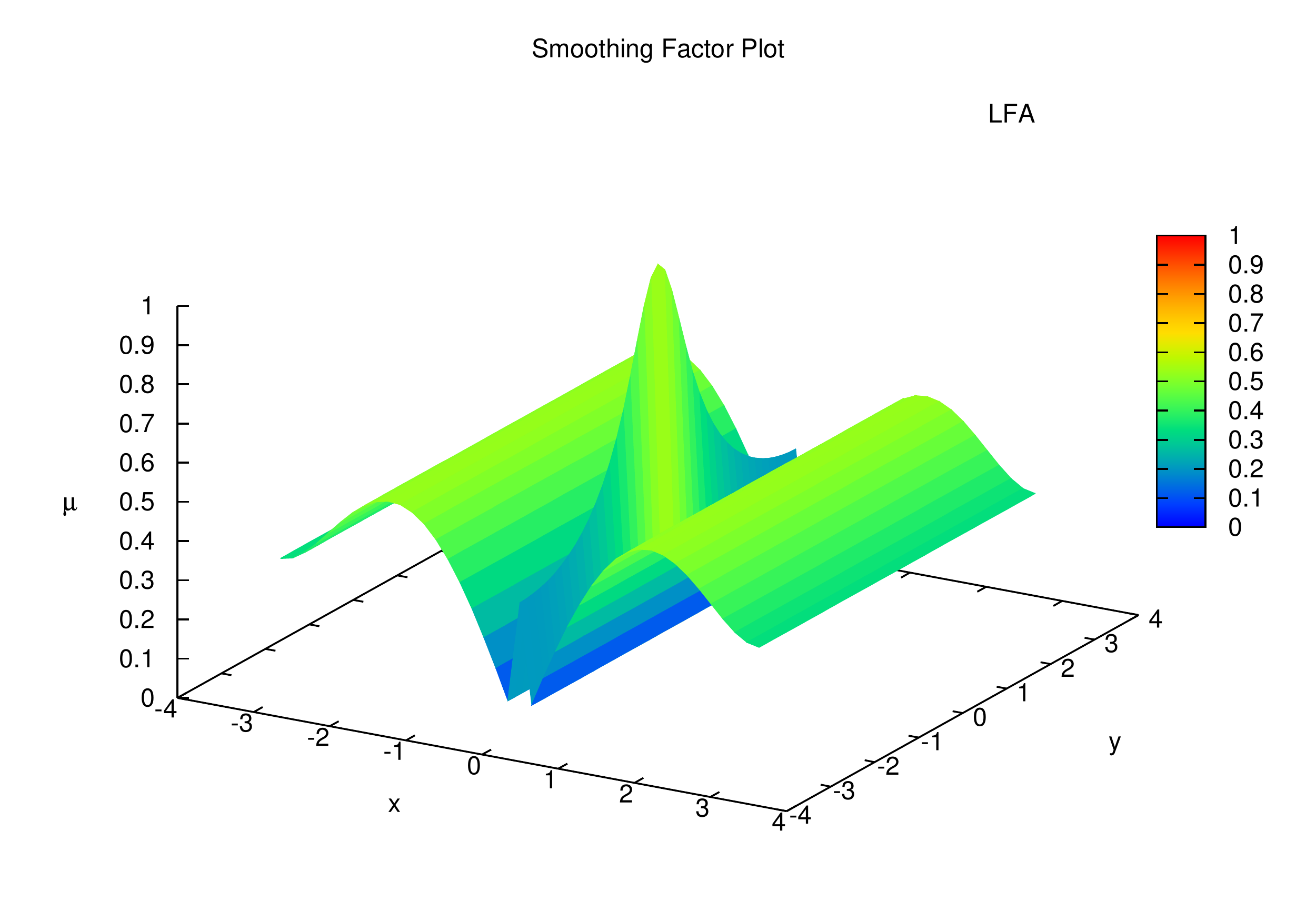}
        \caption{Smoothing factor of example 1 ( see Eqn .~\ref{eqn39}) using splitting 
        $L_{s0}$ for value $\epsilon=10^{-6}$, $\kappa=1/3$, $h=1/64$.}
        \label{fig:lfa}
    \end{figure}
Two grid iteration matrix is written as
\begin{align}
\label{eqn85}
C^{2h}_{h}=I_{h}-P^{h}_{2h}(L_{2h})^{-1}R^{2h}_{h}L_{h}
\end{align}
and two grid error equation is defined as
\begin{align}
\label{eqn86}
 e^{\text{new}} = \mathcal{S}^{\nu_{2}} C^{2h}_{h}\mathcal{S}^{\nu_{1}}e^{\text{old}}
 =\mathcal{M}^{2h}_{h}e^{\text{old}}.
\end{align}
Here by multiplying $C_{h}^{2h}$ to the space $\mathbb{T}^{h}_{\theta}$, where 
${\theta} \in \tilde{\varTheta}^{00} =\varTheta^{00}- \{ {\theta}:L_{2h}(2\theta^{00})=0\}$
leaves the space invariant.\\
\begin{gather}
\label{eqn87}
C^{2h}_{h}:\mathbb{T}_{\theta}^{h}\longrightarrow \mathbb{T}_{\theta}^{h}.
\end{gather}
Fourier representation of two grid is performed in following way
\begin{gather}
 L_{h}:\mathbb{T}_{\theta}^{h}\longrightarrow \mathbb{T}_{\theta}^{h}, \quad 
 L_{2h}:\mathbb{T}_{\theta}^{2h}\longrightarrow \mathbb{T}_{\theta}^{2h} \\
 R_{h}:\mathbb{T}_{\theta}^{h}\longrightarrow \mathbb{T}_{\theta}^{2h},
 \quad P_{h}:\mathbb{T}_{\theta}^{2h} \longrightarrow \mathbb{T}_{\theta}^{h}
 \quad \text{with} \quad {\theta} \in \tilde{\varTheta}^{00}\\
 \mathcal{S}:\mathbb{T}_{\theta}^{h}\longrightarrow \mathbb{T}_{\theta}^{h}  
({\theta} \in \tilde{\varTheta}^{00})
\end{gather}
Spectral radius is computed in the following way
\begin{align}
\label{eqn91}
 \rho^{*} = \rho(\mathcal{M}^{2h}_{h}) = \sup_{{\theta} \in \tilde{\varTheta}^{00}}\rho(\mathcal{M}^{2h}_{h}({\theta}))=
 \sup_{{\theta} \in \tilde{\varTheta}^{00}}\rho({\theta}),
\end{align}
where 
\begin{gather}
\label{eqn92}
\tilde{\mathcal{M}}^{2h}_{h}({\theta})=
\tilde{\mathcal{S}}^{\nu_{2}}(I_{h}
-\tilde{P}_{2h}^{h}(\tilde{L}_{2h})^{-1}\tilde{R}_{h}^{2h}\tilde{L}_{h})\tilde{\mathcal{S}}^{\nu_{1}}
,\tilde{\mathcal{M}}^{2h}_{h}({\theta})=\mathcal{M}^{2h}_{h}|_{\mathbb{T}_{\theta}^{h}}({\theta} \in \tilde{\varTheta}^{00}).
\end{gather}
The Fourier symbols of the multi-grid operators for each harmonic in $\mathbb{T}^{h}_{\theta}$ is calculated as follows:
\begin{align}
 \tilde{S}^{\nu} = 
 \begin{pmatrix}
  \mu({\theta^{00}}) &  &  &  \\
  & \mu({\theta^{10}}) &  &  \\
   &   & \mu({ \theta^{01}})  &  \\
   &  &  &  \mu({\theta^{11}}) 
 \end{pmatrix}^{\nu},\\
 \tilde{L}_{h} = 
 \begin{pmatrix}
  \tilde{L}_{h}({\theta^{00}}) &  &  &  \\
  & \tilde{L}_{h}({\theta^{10}}) &  &  \\
   &   & \tilde{L}_{h}({\theta^{01}})  &  \\
   &  &  &  \tilde{L}_{h}({\theta^{11}}) 
 \end{pmatrix},\\ 
 \tilde{R}_{h}=(\tilde{R}_{h}({\theta^{00}}),\tilde{R}_{h}({\theta^{10}}),
 \tilde{R}_{h}({\theta^{01}}),\tilde{R}_{h}({\theta^{11}})),\\
 \tilde{P}_{h}=(\tilde{P}_{h}({\theta^{00}}),\tilde{P}_{h}({\theta^{10}}),
 \tilde{P}_{h}({\theta^{01}}),\tilde{P}_{h}({\theta^{11}}))^{T},\\
 \tilde{L}_{2h} = \tilde{L}_{2h}( 2 {\theta^{00}} ) 
\end{align}
For the transfer operators
\begin{align}
 \tilde{L}_{h}({\theta}^{**}) = \sum_{\mu_{x} \in J}\sum_{\mu_{y} \in J}a^{h(2)}_{\mu_{x}\mu_{y}}
 e^{i\theta^{**}_{x}\mu_{x}}e^{i\theta^{**}_{y}\mu_{y}} \\
 \tilde{L}_{2h}(2{\theta}^{00}) = \sum_{\mu_{x} \in J}\sum_{\mu_{y} \in J}a^{2h(2)}_{\mu_{x}\mu_{y}}
 e^{i\theta^{00}_{x}\mu_{x}}e^{i\theta^{00}_{y}\mu_{y}}
\end{align}
Since we can always get a nonsingular matrix $P$ same order as $C^{2h}_{h}$ such that $PC^{2h}_{h}P^{-1}=Q^{2h}_{h}$ holds, where
$Q_{h}^{2h}$ a block matrix consisting of $4 \times 4$ diagonal block $\tilde{Q}^{2h}_{h}(\theta)$
looks for all $\theta \in \tilde{\Theta}_{00}$ like
\begin{align}
\label{eqn2gm}
 \tilde{Q}^{2h}_{h}= \begin{pmatrix}
0\\
&\!\!1\\
&&1\\
&&&1
\end{pmatrix}
\end{align}
then the smoothing factor is equivalent to
\begin{align}
\label{eqn101}
 \mu= \sup_{\theta \in \tilde{\Theta}_{00}} \rho(\tilde{S}(\theta)Q_{h}^{2h}(\theta))=
 \sup_{\theta \in \tilde{\Theta}_{00}} \rho(\theta)
\end{align}
Computation of $\mu$ is important for observing two-grid convergence during relaxation.
In next Section we illustrate a criterion for two-grid convergence.
\subsection{Convergence criterion of hybrid splitting}
In this section, we give a general criteria for the convergence study of hybrid schemes used in our EHL model problem.
Let us reconsider linear system
$$ L_{\kappa}u=f,$$
where $[L_{\kappa}]_{m\times m}$ a regular matrix (for definition see \cite{Varga}) and $f$ and $u$ are known values.
For applying hybrid splitting in above equation matrix $L_{\kappa}$ is understood as 
$$ L_{\kappa}=L_{\kappa}^{\Omega_{\epsilon}}L_{\kappa}^{\Omega'_{\epsilon}},$$
where $[L_{\kappa}^{\Omega_{\epsilon}}]$ and  $[L_{\kappa}^{\Omega'_{\epsilon}}]$ are regular applied splittings in 
$$\Omega_{\epsilon}=\Big\{(x,y)\Big|\min\Big(\frac{\epsilon(x,y)}{h_{x}},\frac{\epsilon(x,y)}{h_{y}}\Big)\le 0.6\Big\}$$ 
and 
$$\Omega'_{\epsilon}=\Big\{(x,y)\Big|\min\Big(\frac{\epsilon(x,y)}{h_{x}},\frac{\epsilon(x,y)}{h_{y}}\Big)> 0.6\Big\}$$
sub-domains respectively.\\
Now assume that $[L_{\kappa}^{\Omega_{\epsilon}}]$ has the following splitting
$$L_{\kappa}^{\Omega_{\epsilon}}=M_{\kappa}^{\Omega_{\epsilon}}-N_{\kappa}^{\Omega_{\epsilon}},$$
where $M_{\kappa}^{\Omega_{\epsilon}}$ is a regular easily invertible matrix and $N_{\kappa}^{\Omega_{\epsilon}}$ 
is a positive rest matrix. Then our splitting can be defined as 
$$ u^{n+1}_{\Omega_{\epsilon}}=u^{n}_{\Omega_{\epsilon}}-(M_{\kappa}^{\Omega_{\epsilon}})^{-1}(L_{\kappa}^{\Omega_{\epsilon}}-f)$$
Then above iteration will converge for any initial guess $u^{0}$ if following theorem holds 
\begin{theorem}
 Let $L_{\kappa}^{\Omega_{\epsilon}}=M_{\kappa}^{\Omega_{\epsilon}}-N_{\kappa}^{\Omega_{\epsilon}}$ be a regular splitting of matrix $L_{\kappa}^{\Omega_{\epsilon}}$
 and $(L_{\kappa}^{\Omega_{\epsilon}})^{-1} \ge 0$, then we have 
 $$\rho((M_{\kappa}^{\Omega_{\epsilon}})^{-1}N_{\kappa}^{\Omega_{\epsilon}})
 =\frac{\rho((L_{\kappa}^{\Omega_{\epsilon}})^{-1}N_{\kappa}^{\Omega_{\epsilon}})}
 {1+\rho((L_{\kappa}^{\Omega_{\epsilon}})^{-1}N_{\kappa}^{\Omega_{\epsilon}})} < 1$$
\end{theorem}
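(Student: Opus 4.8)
The plan is to recognize this as the classical spectral identity for regular splittings (Varga) and to reproduce its proof using Perron--Frobenius theory together with the algebraic link between the iteration matrix $M^{-1}N$ and the matrix $A^{-1}N$. Throughout I write $A := L_{\kappa}^{\Omega_{\epsilon}}$, $M := M_{\kappa}^{\Omega_{\epsilon}}$ and $N := N_{\kappa}^{\Omega_{\epsilon}}$, so that the regular-splitting hypothesis reads $A = M - N$ with $M^{-1}\ge 0$ and $N \ge 0$, and we are additionally given $A^{-1}\ge 0$.

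First I would set $B := A^{-1}N$ and $T := M^{-1}N$, the latter being the iteration matrix whose spectral radius we must compute. Both are entrywise nonnegative: $B \ge 0$ because $A^{-1}\ge 0$ and $N \ge 0$, and $T \ge 0$ because $M^{-1}\ge 0$ and $N \ge 0$. The key algebraic step is to rewrite $T$ in terms of $B$. Since $M = A + N = A(I + A^{-1}N) = A(I+B)$ and $I+B = A^{-1}M$ is a product of nonsingular matrices, $I+B$ is invertible with $(I+B)^{-1} = M^{-1}A$; hence
$$ T = M^{-1}N = M^{-1}A\,A^{-1}N = (I+B)^{-1}B. $$
This identity shows that $T$ and $B$ share eigenvectors and that an eigenvalue $\mu$ of $B$ corresponds to the eigenvalue $g(\mu) := \mu/(1+\mu)$ of $T$.

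Next I would invoke Perron--Frobenius. Because $B \ge 0$, its spectral radius $\beta := \rho(B) = \rho(A^{-1}N)$ is itself an eigenvalue of $B$ with a nonnegative eigenvector $y \ge 0$; feeding $y$ through the identity gives $Ty = \frac{\beta}{1+\beta}\,y$, so $\frac{\beta}{1+\beta}$ is an eigenvalue of $T$ and, being nonnegative, yields $\rho(T)\ge \frac{\beta}{1+\beta}$. For the reverse inequality I use that $T \ge 0$, so $\tau := \rho(T)$ is a nonnegative eigenvalue of $T$ with eigenvector $x \ge 0$. Substituting into $T=(I+B)^{-1}B$ and multiplying by $(I+B)$ gives $(1-\tau)Bx = \tau x$; the case $\tau=1$ would force $x=0$, so $\tau\neq 1$ and $Bx = \frac{\tau}{1-\tau}\,x$. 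Since $B\ge 0$ and $x\ge 0$, the eigenvalue $\frac{\tau}{1-\tau}$ is nonnegative, which (together with $\tau\ge 0$) forces $\tau<1$; being an eigenvalue of $B$ it satisfies $\frac{\tau}{1-\tau}\le \rho(B)=\beta$. As $t\mapsto t/(1-t)$ is strictly increasing on $[0,1)$ with inverse $g$, this gives $\tau \le \frac{\beta}{1+\beta}$. Combining the two bounds yields $\rho(M^{-1}N)=\frac{\beta}{1+\beta}=\frac{\rho(A^{-1}N)}{1+\rho(A^{-1}N)}$, and since $\beta\ge 0$ the strict bound $\frac{\beta}{1+\beta}<1$ is immediate.

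The step I would guard most carefully --- the main obstacle --- is the eigenvalue/eigenvector bookkeeping under Perron--Frobenius when the sub-domain matrix $B$ may be reducible: I must only use that a nonnegative matrix possesses a real nonnegative eigenvalue equal to its spectral radius with a nonnegative eigenvector, and that any nonnegative eigenvector corresponds to a nonnegative eigenvalue bounded by that radius, rather than assuming a simple, strictly dominant Perron root. This is precisely why the reverse inequality is routed through the sign of $\frac{\tau}{1-\tau}$ and the bound $\le\rho(B)$. The remaining hypotheses (that $M$ is easily invertible and $N$ is a positive rest matrix) are exactly what secures $M^{-1}\ge 0$ and $N\ge 0$, so no further structure of the EHL discretization beyond the stated regular-splitting property is required.
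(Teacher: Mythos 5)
Your proof is correct, and it is precisely the classical regular-splitting argument (via the identity $M^{-1}N=(I+A^{-1}N)^{-1}A^{-1}N$ and Perron--Frobenius applied to the nonnegative matrices $A^{-1}N$ and $M^{-1}N$) found in Varga's \emph{Matrix Iterative Analysis}, which is exactly where the paper points: it gives no proof of its own and simply cites Varga. Since you supply in full the argument the paper only references, there is nothing to fault; the careful handling of possibly reducible nonnegative matrices (using only the existence of a nonnegative eigenvector for the spectral radius, not a simple dominant Perron root) is the right level of generality.
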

\begin{proof}
For the proof of this theorem we refer to see Varga \cite{Varga}.
\end{proof}
Now we will prove other part of matrix splitting $L_{\kappa}^{\Omega'_{\epsilon}}$. This part of matrix there is no straightforward 
splitting is available (see \cite{Varga,wittum}). Let $L_{\kappa}^{\Omega'_{\epsilon}}$ is regular,
but dense and the designing suitable splitting in the sense of Varga is complicated.
Suppose if it is possible to construct nonsingular matrix $L^{r}_{\kappa}$  such that equation below
$$L_{\kappa}^{\Omega'_{\epsilon}}L^{r}_{\kappa}=M_{\kappa}^{\Omega'_{\epsilon}}-N_{\kappa}^{\Omega'_{\epsilon}} $$
is easy to solve and we can rewrite splitting as 
$$L_{\kappa}^{\Omega'_{\epsilon}}=(M_{\kappa}^{\Omega'_{\epsilon}}-N_{\kappa}^{\Omega'_{\epsilon}}){L^{r}_{\kappa}}^{-1} $$
Then for above splitting our iteration is denoted as
$$u^{n+1}=u^{n}-L^{r}_{\kappa}(M_{\kappa}^{\Omega'_{\epsilon}})^{-1}(L_{\kappa}^{\Omega'_{\epsilon}}-f)$$
Therefore above iteration will converge for any initial guess if following theorem holds
\begin{theorem}
 Let $(M_{\kappa}^{\Omega'_{\epsilon}}-N_{\kappa}^{\Omega'_{\epsilon}})(L^{r}_{\kappa})^{-1}$ be a regular splitting of matrix
 $L_{\kappa}^{\Omega'_{\epsilon}}$
 and $(L_{\kappa}^{\Omega'_{\epsilon}})^{-1} \ge 0$, then we have 
 $$\rho(L^{r}_{\kappa}(M_{\kappa}^{\Omega'_{\epsilon}})^{-1}N_{\kappa}^{\Omega'_{\epsilon}}(L^{r}_{\kappa})^{-1})
 =\frac{\rho((L_{\kappa}^{\Omega'_{\epsilon}})^{-1}N_{\kappa}^{\Omega'_{\epsilon}}(L^{r}_{\kappa})^{-1})}
 {1+\rho((L_{\kappa}^{\Omega'_{\epsilon}})^{-1}N_{\kappa}^{\Omega'_{\epsilon}}(L^{r}_{\kappa})^{-1})} < 1$$ 
\end{theorem}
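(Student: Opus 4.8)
The plan is to reduce the claim to the preceding theorem (Varga's regular-splitting theorem) by combining a similarity transformation with the spectral identity $\rho(XY)=\rho(YX)$. For brevity write $A:=L_{\kappa}^{\Omega'_{\epsilon}}$, $M:=M_{\kappa}^{\Omega'_{\epsilon}}$, $N:=N_{\kappa}^{\Omega'_{\epsilon}}$ and $R:=L^{r}_{\kappa}$, so that by construction $AR=M-N$, equivalently $A=(M-N)R^{-1}$. First I would identify the error-propagation operator of the iteration $u^{n+1}=u^{n}-RM^{-1}(Au^{n}-f)$. Substituting $A=(M-N)R^{-1}$ gives $RM^{-1}A=I-RM^{-1}NR^{-1}$, whence $u^{n+1}=RM^{-1}NR^{-1}u^{n}+RM^{-1}f$, so the iteration matrix is $T:=RM^{-1}NR^{-1}$, which is exactly the operator whose spectral radius appears on the left-hand side of the statement.

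The key simplification is the second step: since $R$ is nonsingular, $T=R\,(M^{-1}N)\,R^{-1}$ is a similarity transform of $M^{-1}N$, and similar matrices share their spectrum, so $\rho(T)=\rho(M^{-1}N)$. It therefore suffices to control $\rho(M^{-1}N)$ and to match it to the fraction on the right.

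The third step applies the preceding theorem to the auxiliary matrix $B:=AR=M-N$. I would check that $B=M-N$ is a regular splitting in Varga's sense, that is $M^{-1}\ge 0$ and $N\ge 0$ (the standing hypotheses on the constructed pair), and that $B^{-1}\ge 0$. The last point follows from $B^{-1}=(AR)^{-1}=R^{-1}A^{-1}$ together with $A^{-1}=(L_{\kappa}^{\Omega'_{\epsilon}})^{-1}\ge 0$ (a hypothesis) and $R^{-1}=(L^{r}_{\kappa})^{-1}\ge 0$ (supplied by the explicit construction of the right preconditioner). Granting these, the preceding theorem yields $\rho(M^{-1}N)=\rho(B^{-1}N)/\bigl(1+\rho(B^{-1}N)\bigr)<1$.

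Finally I would recast $\rho(B^{-1}N)$ into the prescribed form. Using $B^{-1}=R^{-1}A^{-1}$ and the identity $\rho(XY)=\rho(YX)$ with $X=R^{-1}$ and $Y=A^{-1}N$ gives $\rho(B^{-1}N)=\rho\bigl(R^{-1}A^{-1}N\bigr)=\rho\bigl(A^{-1}NR^{-1}\bigr)=\rho\bigl((L_{\kappa}^{\Omega'_{\epsilon}})^{-1}N_{\kappa}^{\Omega'_{\epsilon}}(L^{r}_{\kappa})^{-1}\bigr)$. Inserting this together with the similarity identity $\rho(T)=\rho(M^{-1}N)$ into the bound from the third step produces precisely the asserted equality and the strict inequality $<1$. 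The algebra (expanding the iteration, invoking $\rho(XY)=\rho(YX)$, and the similarity argument) is routine; the genuine obstacle is the nonnegativity bookkeeping of the third step, namely establishing $(L^{r}_{\kappa})^{-1}\ge 0$ and verifying that the induced pair $(M,N)$ really constitutes a regular splitting of $B=L_{\kappa}^{\Omega'_{\epsilon}}L^{r}_{\kappa}$, since these facts depend on the concrete and only loosely specified construction of $L^{r}_{\kappa}$ rather than on formal manipulation.
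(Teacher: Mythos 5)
The paper offers no proof of this theorem at all: as with the preceding one, it implicitly defers to Varga's regular-splitting theorem, so your proposal is being compared against an omitted argument. Your algebra is correct throughout --- the identification of the iteration matrix $T=L^{r}_{\kappa}(M_{\kappa}^{\Omega'_{\epsilon}})^{-1}N_{\kappa}^{\Omega'_{\epsilon}}(L^{r}_{\kappa})^{-1}$, the similarity $\rho(T)=\rho\big((M_{\kappa}^{\Omega'_{\epsilon}})^{-1}N_{\kappa}^{\Omega'_{\epsilon}}\big)$, and the use of $\rho(XY)=\rho(YX)$ to rewrite $\rho\big((L^{r}_{\kappa})^{-1}(L_{\kappa}^{\Omega'_{\epsilon}})^{-1}N_{\kappa}^{\Omega'_{\epsilon}}\big)$ all check out. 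But your detour through the auxiliary matrix $B=L_{\kappa}^{\Omega'_{\epsilon}}L^{r}_{\kappa}=M_{\kappa}^{\Omega'_{\epsilon}}-N_{\kappa}^{\Omega'_{\epsilon}}$ is what forces you to verify $(L^{r}_{\kappa})^{-1}\ge 0$, $(M_{\kappa}^{\Omega'_{\epsilon}})^{-1}\ge 0$ and $N_{\kappa}^{\Omega'_{\epsilon}}\ge 0$ --- facts that are \emph{not} among the stated hypotheses and that you rightly flag as the weak point. That gap is avoidable: the hypothesis says that $A:=L_{\kappa}^{\Omega'_{\epsilon}}$ admits the regular splitting $A=M'-N'$ with $M':=M_{\kappa}^{\Omega'_{\epsilon}}(L^{r}_{\kappa})^{-1}$ and $N':=N_{\kappa}^{\Omega'_{\epsilon}}(L^{r}_{\kappa})^{-1}$, so the preceding theorem applies verbatim to $A$ itself and gives
\begin{equation*}
\rho\big((M')^{-1}N'\big)=\frac{\rho\big(A^{-1}N'\big)}{1+\rho\big(A^{-1}N'\big)}<1 ,
\end{equation*}
which is literally the asserted identity once you note $(M')^{-1}N'=L^{r}_{\kappa}(M_{\kappa}^{\Omega'_{\epsilon}})^{-1}N_{\kappa}^{\Omega'_{\epsilon}}(L^{r}_{\kappa})^{-1}$ and $A^{-1}N'=(L_{\kappa}^{\Omega'_{\epsilon}})^{-1}N_{\kappa}^{\Omega'_{\epsilon}}(L^{r}_{\kappa})^{-1}$. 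So the direct route needs no similarity argument, no $\rho(XY)=\rho(YX)$, and no positivity of $(L^{r}_{\kappa})^{-1}$; your route buys a cleaner interpretation of the iteration as a preconditioned scheme for $B$, at the cost of hypotheses the theorem does not grant you. I would recommend restructuring along the direct line, or explicitly adding the assumptions your decomposition requires.
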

The following theorem providing sufficient conditions for the convergence of the two-grid method $Q_2$ ( define in Eqn.\ref{eqn2gm}) is due to Hackbusch.
\begin{theorem}
Let us assume that $\mathcal{S}_{l}$ is a smoothing operator for $K_l$ that means there exist $\eta(\nu)$ and $\nu'(h)$
so that the following condition holds
 \begin{align}
 \label{eqn105}
  ||K_{l}\mathcal{S}_{l}^{\nu}||_{F\leftarrow U} \le \eta(\nu) \quad \quad \forall \quad \nu : 1\le\nu\le\nu'(h), \quad l\ge 2, \nonumber \\  
  \eta(\nu) \rightarrow 0 \quad \text{for} \quad \nu \rightarrow \infty, \quad \nu '(h) =\infty \quad \text{or} \quad 
  \nu'(h) \rightarrow \infty \quad \text{for} \quad h \rightarrow 0
 \end{align}
and also assume that operator $K_l$ is approximated accurately (by prolongation and restriction operator) in the following sense such that
     $\exists \quad C_{A} \rightarrow 0$, independent of $h$ so that 
 \begin{align}
 \label{eqn106}
  ||K^{-1}_{l}-P(K_{l-1})^{-1}R||_{U\leftarrow F} \le C_{A}  \quad \forall \quad  l \ge 2
 \end{align}
  then there exist $h$ and $\nu \in \mathbf{N}$:
  \begin{align}
   \label{eqn107}
  ||Q_{2,l}(\nu,0)||_{U \leftarrow U} \le C_{A} \eta(\nu) < 1 
  \end{align}
holds for $\nu$ with $\nu'(h_{l}) \ge \nu \ge \nu(h_{l})$ and $h_{2} \le h$ and  the two-grid method $Q_{2,l}$ from 
Eqn.~\ref{eqn92} converges monotonically, independently of $h$.
\end{theorem}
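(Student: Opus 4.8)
The plan is to reproduce Hackbusch's classical two-grid argument, whose entire content is that the two-grid contraction number factors into the product of the \emph{approximation property}~\eqref{eqn106} and the \emph{smoothing property}~\eqref{eqn105}. First I would write the two-grid iteration operator with $\nu$ pre-smoothing steps and no post-smoothing step ($\nu_2=0$) explicitly as
\begin{align*}
Q_{2,l}(\nu,0) = \big(I - P K_{l-1}^{-1} R K_l\big)\mathcal{S}_l^{\nu},
\end{align*}
so that the leading factor $I - P K_{l-1}^{-1} R K_l$ is recognized as the abstract coarse-grid correction operator, the level-independent analogue of the operator $C^{2h}_h$ in Eqn.~\eqref{eqn85}.

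The key algebraic step, and the crux of the whole proof, is to factor the coarse-grid correction through $K_l$ itself. Since $K_l$ is invertible we may write $I = K_l^{-1}K_l$, whence
\begin{align*}
I - P K_{l-1}^{-1} R K_l = \big(K_l^{-1} - P K_{l-1}^{-1} R\big) K_l.
\end{align*}
Substituting this identity and regrouping the smoother on the right gives
\begin{align*}
Q_{2,l}(\nu,0) = \big(K_l^{-1} - P K_{l-1}^{-1} R\big)\,\big(K_l \mathcal{S}_l^{\nu}\big),
\end{align*}
which is exactly the composition of the operator estimated in~\eqref{eqn106} with the operator estimated in~\eqref{eqn105}. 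I expect the delicate point to be the bookkeeping of the operator norms between the solution space $U$ and the residual space $F$: the first factor is measured in $\|\cdot\|_{U\leftarrow F}$ while the second is measured in $\|\cdot\|_{F\leftarrow U}$, so their composition lands in $\|\cdot\|_{U\leftarrow U}$ and the induced operator norm is genuinely submultiplicative across these spaces.

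With the factorization in hand, the estimate is immediate. By submultiplicativity,
\begin{align*}
\|Q_{2,l}(\nu,0)\|_{U\leftarrow U} \le \big\|K_l^{-1} - P K_{l-1}^{-1} R\big\|_{U\leftarrow F}\,\big\|K_l \mathcal{S}_l^{\nu}\big\|_{F\leftarrow U} \le C_A\,\eta(\nu),
\end{align*}
where I use the approximation property~\eqref{eqn106} to bound the first factor by $C_A$ and the smoothing property~\eqref{eqn105} to bound the second by $\eta(\nu)$, the latter being admissible precisely as long as $\nu \le \nu'(h_l)$.

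Finally, because $\eta(\nu)\to 0$ as $\nu\to\infty$, I would fix a lower threshold $\nu(h_l)$ large enough that $C_A\,\eta(\nu) < 1$ for every $\nu$ in the window $\nu(h_l)\le \nu \le \nu'(h_l)$, and then choose $h$ with $h_2 \le h$ so that this window is nonempty, which is guaranteed by the hypothesis that $\nu'(h)=\infty$ or $\nu'(h)\to\infty$ as $h\to 0$. The resulting bound $\|Q_{2,l}(\nu,0)\|_{U\leftarrow U}\le C_A\,\eta(\nu)<1$, with a contraction factor independent of $h$, forces the two-grid error to decrease by a fixed factor at every cycle, yielding exactly the $h$-independent monotone convergence asserted in~\eqref{eqn107}. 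The only genuine obstacle is the norm-space consistency underlying the factorization; once the $U\leftarrow F$ and $F\leftarrow U$ norms are set up so that the composition is controlled, the remainder is a direct application of the two assumed properties together with the choice of a sufficiently large $\nu$.
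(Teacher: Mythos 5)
Your proposal is correct and follows exactly the paper's argument: the paper's entire proof is the single factorization $Q_{2,l}(\nu,0) = (K^{-1}_{l}-P(K_{l-1})^{-1}R)(K_{l}\mathcal{S}_{l}^{\nu})$, which you derive from the coarse-grid correction form and then combine with the approximation and smoothing properties via submultiplicativity. You simply spell out the norm bookkeeping and the choice of $\nu$ that the paper leaves implicit.
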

\begin{proof}
 It follows straight way by taking \small $Q_{2,l}(\nu,0) = (K^{-1}_{l}-P(K_{l-1})^{-1}R)(K_{l}\mathcal{S}_{l}^{\nu})$\normalsize.
\end{proof}
\section{Numerical Results}\label{sec:six}
In Section~\ref{sec:three}, we have illustrated TVD implementation for solving linear convection-diffusion problem through a class of splittings.
Now we investigate the performance of mentioned splittings and compare the results with classical defect-correction.
For numerical tests we consider analytical solution as $u=x^{4}+y^{4}$ from Oosterlee \cite{Oosterlee}.
All numerical computations is performed on author's personal laptop having 2GB RAM and Intel(R) Core(TM) i3-2328M CPU @ 2.20GHz. 
Dirichlet boundary is imposed for all test cases on domain $\Omega=\Big\{ (x,y); -1 \le x \le 1,-1 \le y \le 1 \Big\}$. 
For all numerical experiments, we take diffusion coefficient $\epsilon=10^{-6}$ and $\kappa=-1.0,0.0,1/3$.
Numerical tests are performed for the problem given as Example~\ref{ex:one} using $Ls0$ splitting, $Ls1$ splitting and classical defect-correction
technique using hierarchical multi-level grid. Computational results of relative error and corresponding order
in $L^{1},L^{\infty},L^{2}$-norms are presented on Table~\ref{Table.l1sp1}-~\ref{Table.l6sp0} on the finest grid level ($7^{th}$ level using $3 V(2,1)$ cycle).  
       \begin{figure}
        \centering
        \includegraphics[width=11cm,height=11cm,angle =-90,keepaspectratio]{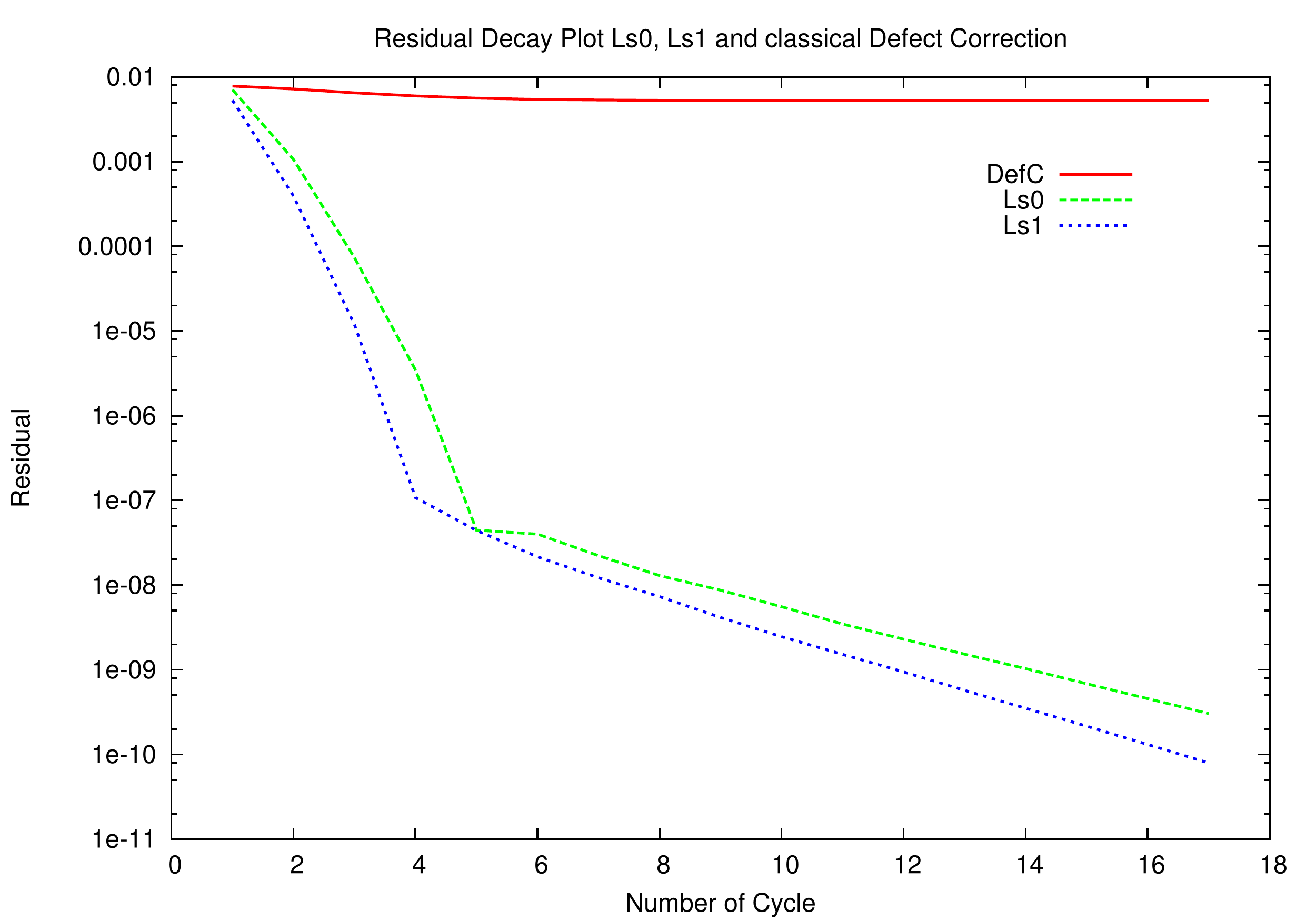}
        \caption{Comparison of residual decay of splitting $Ls0$ and splitting $Ls1$ 
        for $\kappa=1/3$ on $7^{th}$ level $V(2,1)$ cycle}
        \label{figg5}
    \end{figure}
           \begin{figure}
        \centering
        \includegraphics[width=11cm,height=11cm,angle =-90,keepaspectratio]{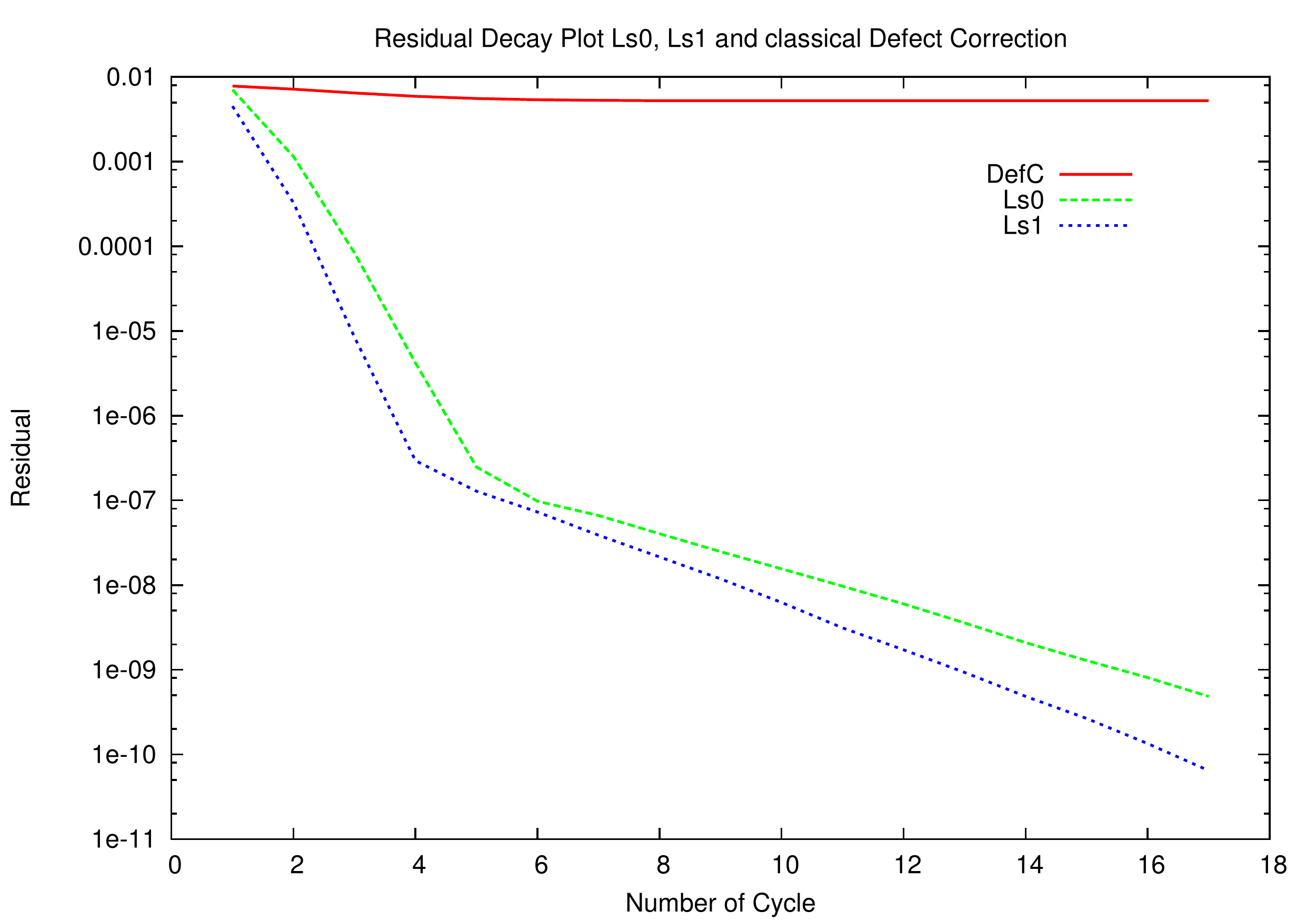}
        \caption{Comparison of residual decay of splitting $Ls0$, splitting $Ls1$ and classical Defect-correction 
        for $\kappa=0.0$ on $7^{th}$ level $V(2,1)$ cycle}
        \label{figg6}
    \end{figure}
$L^{2}$ norm error is evaluated in the following way
\begin{align}
\label{eqn96}
L^{2}(k,k-1)= \sqrt{H^{d}\sum\Big(\tilde{u}^{k-1}-I_{h}^{H}\bar{u}^{k}\Big)^{2}},
\end {align}
where $H$ is the mesh size on grid $k-1$, $\bar{u}^{k}$ is the converged solution on grid $k$ and $d$ 
denotes the dimension of the problem. The order of convergence is derived as
\begin{align}
\label{eqn97}
p_{2}=\frac{\log L^{2}(k-1,k-2)-\log L^{2}(k,k-1)}{\log 2}, 
\end{align}
where $p_{2}$ is the order of discretization in $L^{2}$ norm.
We also calculate $L^{\infty}$ and $L^1$-error and corresponding order in similar fashion.
From numerical experiments we observe that splitting $Ls0$ and $Ls1$ always show fast residual decay compare to classical 
defect-correction. Fig.~\ref{figg5} and Fig.~\ref{figg6} present the residual decay results for $Ls0$ splitting , $Ls1$ splitting 
and classical defect-correction technique for $\kappa=0.0,1/3$.
Moreover, residual decay of splitting $Ls1$ is more better than splitting $Ls0$.
On the other hand, we observe that splitting $Ls0$ has larger range of robustness
($-1.0 \le \kappa \le 0.9$) than splitting $Ls1$ ($-1.0 \le \kappa \le 0.8$). 
\vspace{\baselineskip}
\begin{table}
\caption{Comparison of $L^{\infty}$-, $L^1$-, and $L^2$-error
obtained for splitting $Ls1$ in case of the linear 
convection-diffusion equation (Example~\ref{ex:one} $\epsilon=10^{-6},\kappa=1/3$) over the domain
$\Omega=[-1, 1]\times [-1, 1]$.}
\centering 
\begin{tabular}{ccccccc}
\hline 
 $N$ &$L^{\infty}$-error & $p_{\infty}$ & $L^1$-error & $p_{1}$&$L^2$-error & $p_{2}$\\ 
\hline
16$\times$16    &1.19566e-02 & --        &2.25624e-03     &--         &1.83208e-02&--\\
32$\times$32    &2.62647e-03 & 2.1866    &3.57540e-04     &2.6577     &2.92872e-03&2.6451\\
64$\times$64    &5.70763e-04 & 2.2022    &4.33084e-05     &3.0454     &3.64904e-04&3.0047\\
128$\times$128  &1.06927e-04 & 2.4163    &5.45271e-06     &2.9896     &4.73857e-05&2.9450\\
256$\times$256  &1.92096e-05 & 2.4767    &6.79793e-07     &3.0038     &6.09179e-06&2.9595\\
512$\times$512  &3.40453e-06 & 2.4963    &8.44721e-08     &3.0085     &7.74616e-07&2.9753\\
\hline
\end{tabular}
\label{Table.l1sp1}
\end{table}
\begin{table}
\caption{Comparison of $L^{\infty}$-, $L^1$-, and $L^2$-error
obtained for splitting $Ls1$ in case of the linear 
convection-diffusion equation (Example~\ref{ex:one} $\epsilon=10^{-6},\kappa=0.0$) over the domain
$\Omega=[-1, 1]\times [-1, 1]$.}
\centering 
\begin{tabular}{ccccccc}
\hline 
 $N$ &$L_{\infty}$-error & $p_{\infty}$ & $L^1$-error & $p_{1}$&$L^2$-error & $p_{2}$\\ 
\hline
$16\times16$      & 1.27672e-02   & --         &1.49677e-03  &--        &1.36680e-02  &--   \\
$32 \times 32$    & 2.73792e-03   & 2.2213     &1.80364e-04  &3.0529    &1.82037e-03  &2.9085\\
$64 \times 64$    & 6.22587e-04   & 2.1367     &7.33006e-05  &1.2990    &6.63061e-04  &1.4570\\
$128 \times 128$  & 2.07084e-04   & 1.5881     &2.37525e-05  &1.6257    &2.13343e-04  &1.6360\\
$256 \times 256$  & 5.98623e-05   & 1.7905     &6.73718e-06  &1.8179    &5.99206e-05  &1.8321\\
$512 \times 512$  &1.58405e-05    &1.9180      &1.79203e-06  &1.9106    &1.58361e-05  &1.9198\\
\hline
\end{tabular}
\label{Table.l2sp1}
\end{table}
\begin{table}
\caption{Comparison of $L^{\infty}$-, $L^1$-, and $L^2$-error obtained for splitting
$Ls1$ in case of the linear convection-diffusion equation (Example~\ref{ex:one} $\epsilon=10^{-6},\kappa=-1.0$)
over the domain $\Omega=[-1, 1]\times [-1, 1]$.}
\centering 
\begin{tabular}{ccccccc}
\hline 
 $N$ &$L_{\infty}$-error & $p_{\infty}$ & $L^1$-error & $p_{1}$&$L^2$-error & $p_{2}$\\ 
\hline
$16 \times 16$    & 2.32470e-03 & --     & 1.56030e-02 &  --     & 2.05497e-02 &   --\\
$32\times 32$     & 1.01308e-03 & 1.1983 & 1.00995e-02 &0.62754  & 9.31777e-03 & 1.1411\\
$64\times 64$     & 3.78032e-04 & 1.4222 & 4.35094e-03 &1.2149   & 3.42296e-03 & 1.4447\\
$128 \times128$   & 1.07979e-04 & 1.8078 & 1.44691e-03 &1.5884   & 9.67504e-04 & 1.8229\\
$256\times 256$   & 2.86739e-05 & 1.9129 & 4.47319e-04 &1.6936   & 2.54980e-04 & 1.9239\\
$512 \times 512$  & 7.39007e-06 & 1.9561 & 1.28974e-04 &1.7942   & 6.53620e-05 & 1.9639\\
\hline
\end{tabular}
\label{Table.l3sp1}
\end{table}
\begin{table}
\caption{Comparison of $L^{\infty}$-, $L^1$-, and $L^2$-error obtained for splitting
$Ls0$ in case of the linear convection-diffusion equation (Example~\ref{ex:one} $\epsilon=10^{-6},\kappa=-1.0$)
over the domain $\Omega=[-1, 1]\times [-1, 1]$.}
\centering 
\begin{tabular}{ccccccc}
\hline 
 $N$ &$L_{\infty}$-error & $p_{\infty}$ & $L^1$-error & $p_{1}$&$L^2$-error & $p_{2}$\\ 
\hline
$16\times 16$     & 1.62417e-02 &--        & 2.30732e-03 & --      & 2.04151e-02 &--     \\
$32 \times 32 $   & 1.01696e-02 & 0.67544  & 1.03223e-03 & 1.1605  & 9.53668e-03 &1.0981 \\
$64 \times 64 $   & 3.89903e-03 & 1.3831   & 3.65800e-04 & 1.4966  & 3.32527e-03 &1.5200  \\
$128 \times 128$  & 1.20459e-03 & 1.6946   & 1.05973e-04 & 1.7874  & 9.49264e-04 &1.8086  \\
$256 \times 256 $ & 3.42856e-04 & 1.8129   & 2.84576e-05 & 1.8968  & 2.52429e-04 &1.9109   \\
$512 \times 512 $ & 9.05700e-05 & 1.9205   & 7.36748e-06 & 1.9496  & 6.49791e-05 &1.9578   \\
\hline
\end{tabular}
\label{Table.l4sp0}
\end{table}
\begin{table}
\caption{Comparison of $L^{\infty}$-, $L^1$-, and $L^2$-error obtained for splitting
$Ls0$ in case of the linear convection-diffusion equation (Example~\ref{ex:one} $\epsilon=10^{-6},\kappa=1/3$)
over the domain $\Omega=[-1, 1]\times [-1, 1]$.}
\centering 
\begin{tabular}{ccccccc}
\hline 
 $N$ &$L_{\infty}$-error & $p_{\infty}$ & $L^1$-error & $p_{1}$&$L^2$-error & $p_{2}$\\ 
\hline
$16   \times 16 $& 1.17579e-02 & --      & 2.25826e-03 & --      & 1.83078e-02 & --     \\
$32   \times 32 $& 1.76038e-03 & 2.7397  & 3.32640e-04 & 2.7632  & 2.72048e-03 & 2.7505  \\
$64   \times 64 $& 2.57573e-04 & 2.7728  & 4.20422e-05 & 2.9841  & 3.43166e-04 & 2.9869  \\
$128 \times 128 $& 3.47087e-05 & 2.8916  & 5.37451e-06 & 2.9676  & 4.37263e-05 & 2.9723  \\
$256 \times 256 $& 4.54820e-06 & 2.9319  & 6.78313e-07 & 2.9861  & 5.51381e-06 & 2.9874   \\
$512 \times 512 $& 6.02630e-07 & 2.9160  & 8.51091e-08 & 2.9946  & 6.91644e-07 & 2.9949   \\
\hline
\end{tabular}
\label{Table.l5sp0}
\end{table}
\begin{table}
\caption{Comparison of $L^{\infty}$-, $L^1$-, and $L^2$-error obtained for splitting
$Ls0$ in case of the linear convection-diffusion equation (Example~\ref{ex:one} $\epsilon=10^{-6},\kappa=0.0$)
over the domain $\Omega=[-1, 1]\times [-1, 1]$.}
\centering 
\begin{tabular}{ccccccc}
\hline 
$N$ &$L_{\infty}$-error & $p_{\infty}$ & $L^1$-error & $p_{1}$&$L^2$-error & $p_{2}$\\ 
\hline
$ 16 \times 16  $& 1.24738e-02 & --      & 1.50246e-03 & --      & 1.36309e-02 & --     \\
$32 \times 32   $& 2.00172e-03 & 2.6396  & 1.73122e-04 & 3.1175  & 1.68694e-03 & 3.0144 \\
$64 \times 64   $& 5.88728e-04 & 1.7656  & 6.97083e-05 & 1.3124  & 6.33640e-04 & 1.4127 \\
$128 \times 128 $& 1.84579e-04 & 1.6734  & 2.31011e-05 & 1.5934  & 2.08905e-04 & 1.6008 \\
$256 \times 256 $& 5.06126e-05 & 1.8667  & 6.64633e-06 & 1.7973  & 5.93352e-05 & 1.8159 \\
$512 \times 512 $& 1.28329e-05 & 1.9796  & 1.78059e-06 & 1.9002  & 1.57608e-05 & 1.9125 \\
\hline
\end{tabular}
\label{Table.l6sp0}
\end{table}
\subsection{Test case for numerical experiment of EHL problem}
In this section, we perform numerical experiments on EHL model defined in Section~\ref{sec:one}.
We take  Moes (\cite{moes}) dimensionless parameters (which is denoted by $M$ and $L$), where $L$ is fixed at $10$ while $M$ 
is varied between $20-1000$. For all test cases, we fix the parameter $\alpha=1.7 \times 10^{-8}$ over domain
$\Omega=[-2.5,2.5]\times[-2.5,2.5]$. In all cases , we refine grid up to $(1024+1)\times (1024+1)$ points on finest level 
and coarse grid up to $(32+1) \times (32+1)$ points on the coarsest level (except extremely high 
load case we choose coarse grid ($64+1) \times (64+1)$). A class of limiter are applied to solve the problem discussed 
in Section 3 and 4. However, for checking performance of splittings,
we use value $\kappa=0.0,1/3,-1.0$ in our numerical analysis.
In Fig.~\ref{fig:fth6}, we represent film thickness profile $\mathscr{H}$ in inverted form.
Four load cases (a)$M=20,L=10$, (b)$M=50,L=10$, (c) $M=100,L=10$ and (c) $M=1000,L=10$ are solved using the TVD schemes.
The fully converged pressure as well as film thickness profiles and their 
plot results are represented in Fig.~\ref{fig:fth6}-Fig.\ref{fig:P_M_1000_L_10}.
Comparisons of relative error in $L^{2},L^{1}$ and $L^{\infty}$ norms between
$\kappa$ splittings and defect correction schemes are performed which are presented in 
Table.~\ref{Table.ehl_defc_Hm_kp0.0}-~\ref{Table.ehl_hsp2_kp0.0_M50}.
Experimental results show that order of convergence of classical defect-correction is 
almost similar to splittings $L_{hs1}$ and $L_{hs2}$.
However, splittings $L_{hs1}$ and $L_{hs2}$ have slightly better residual
decay in comparison with classical defect-correction which can be seen in Fig.~\ref{fig:compare}.
\begin{table}
\caption{Minimum film thickness result ($M=20,L=10$) for defect-correction $\kappa=0.0$}
\centering 
\begin{tabular}{ccccccc}
\hline 
Level  & $H_{m}$       & $H_{m}$(Moes)     &$H_{c}$     &$H_{c}$(Moes)   &$H_{c}$(Moes)($p_x=0$)\\
\hline 
1   &1.99302e-01  &1.92424 &2.98940e-01  &2.88624  &2.77154\\
2   &2.59716e-01  &2.50753 &3.70695e-01  &3.57903  &3.57760\\
3   &2.70939e-01  &2.61589 &3.89566e-01  &3.76122  &3.75880\\
4   &2.74629e-01  &2.65151 &3.94288e-01  &3.80681  &3.80443\\
5   &2.75320e-01  &2.65819 &3.95428e-01  &3.81782  &3.81582\\
6   &2.75525e-01  &2.66016 &3.95886e-01  &3.82224  &3.82034\\
7   &2.75586e-01  &2.66075 &3.95962e-01  &3.82297  &3.82117\\
\hline
\end{tabular}
\label{Table.ehl_defc_Hm_kp0.0}
\end{table}
\begin{table}
\caption{Comparison of $L^{\infty}$, $L^1$ and $L^2$ relative errors
obtained with $\kappa=0.0$ by Defect-Correction over the domain
$\Omega=[-2.5,2.5]\times [-2.5,2.5]$.}
\centering 
\begin{tabular}{ccccccc}
\hline 
$N$ &$L_{\infty}$-error & $p_{\infty}$ & $L^1$-error & $p_{1}$&$L^2$-error & $p_{2}$\\ 
\hline
$ 16 \times 16  $& 1.57629e-01 & --      & 4.56501e-03 & --      & 9.85013e-02 & --     \\
$32 \times 32   $& 1.75975e-01 &-0.15884 & 2.01928e-03 & 1.1768  & 5.98804e-02 & 0.71806 \\
$64 \times 64   $& 1.69726e-01 & 0.052163& 9.26960e-04 & 1.1233  & 3.78143e-02 & 0.66315 \\
$128 \times 128 $& 1.18555e-01 & 0.51765 & 3.56082e-04 & 1.3803  & 1.79500e-02 & 1.0749 \\
$256 \times 256 $& 7.20097e-02 & 0.71929 & 1.26752e-04 & 1.4902  & 7.87096e-03 & 1.1894 \\
$512 \times 512 $& 3.16527e-02 & 1.1859  & 4.43601e-05 & 1.5147  & 2.76403e-03 & 1.5098 \\
\hline
\end{tabular}
\label{Table.ehl_defc_kp0.0}
\end{table}
\begin{table}
\caption{Comparison of $L^{\infty}$, $L^1$ and $L^2$ relative errors
obtained with $\kappa=1/3$ by Defect-Correction over the domain
$\Omega=[-2.5,2.5]\times [-2.5,2.5]$.}
\centering 
\begin{tabular}{ccccccc}
\hline 
$N$ &$L_{\infty}$-error & $p_{\infty}$ & $L^1$-error & $p_{1}$&$L^2$-error & $p_{2}$\\ 
\hline
$ 16 \times 16  $& 1.57629e-01 & --      & 4.56501e-03 & --      & 9.85013e-02 & --     \\
$32 \times 32   $& 1.75975e-01 &-0.15884 & 2.01928e-03 & 1.1768  & 5.98804e-02 & 0.71806 \\
$64 \times 64   $& 1.69726e-01 & 0.052163& 9.26960e-04 & 1.1233  & 3.78143e-02 & 0.66315 \\
$128 \times 128 $& 1.18555e-01 & 0.51765 & 3.56082e-04 & 1.3803  & 1.79500e-02 & 1.0749 \\
$256 \times 256 $& 7.20097e-02 & 0.71929 & 1.26752e-04 & 1.4902  & 7.87096e-03 & 1.1894 \\
$512 \times 512 $& 3.16527e-02 & 1.1859  & 4.43601e-05 & 1.5147  & 2.76403e-03 & 1.5098 \\
\hline
\end{tabular}
\label{Table.ehl_defc_kp0.33}
\end{table}
\begin{table}
\caption{Comparison of $L^{\infty}$, $L^1$ and $L^2$ errors
obtained with $\kappa=-1.0$ by Defect-Correction over the domain
$\Omega=[-2.5,2.5]\times [-2.5,2.5]$.}
\centering 
\begin{tabular}{ccccccc}
\hline 
$N$ &$L_{\infty}$-error & $p_{\infty}$ & $L^1$-error & $p_{1}$&$L^2$-error & $p_{2}$\\ 
\hline
$ 16 \times 16  $& 1.57629e-01 & --      & 4.56501e-03 & --      & 9.85013e-02 & --     \\
$32 \times 32   $& 1.75975e-01 &-0.15884 & 2.01928e-03 & 1.1768  & 5.98804e-02 & 0.71806 \\
$64 \times 64   $& 1.69726e-01 & 0.052163& 9.26960e-04 & 1.1233  & 3.78143e-02 & 0.66315 \\
$128 \times 128 $& 1.18555e-01 & 0.51765 & 3.56082e-04 & 1.3803  & 1.79500e-02 & 1.0749 \\
$256 \times 256 $& 7.20097e-02 & 0.71929 & 1.26752e-04 & 1.4902  & 7.87096e-03 & 1.1894 \\
$512 \times 512 $& 3.16527e-02 & 1.1859  & 4.43601e-05 & 1.5147  & 2.76403e-03 & 1.5098 \\
\hline
\end{tabular}
\label{Table.ehl_defc_kp-1.0}
\end{table}
\begin{table}
\caption{Comparison of $L^{\infty}$, $L^1$ and $L^2$ errors
obtained (M=20,L=10 case) with $\kappa=0.0$ by splitting $L_{hs1}$  over the domain
$\Omega=[-2.5,2.5]\times [-2.5,2.5]$.}
\centering 
\begin{tabular}{ccccccc}
\hline 
$N$ &$L_{\infty}$-error & $p_{\infty}$ & $L^1$-error & $p_{1}$&$L^2$-error & $p_{2}$\\ 
\hline
$ 32 \times 32    $ &7.99935e-02 &--       &3.25500e-03 &--       &4.31253e-02 &--         \\
$ 64 \times 64    $ &6.76884e-02 &0.240974 &4.20806e-04 &2.951430 &1.35161e-02 &1.673856   \\
$ 128 \times 128  $ &3.53135e-02 &0.938689 &1.14226e-04 &1.881264 &5.18955e-03 &1.380998    \\
$ 256 \times 256  $ &1.01542e-02 &1.798143 &3.02821e-05 &1.915354 &1.35755e-03 &1.934604    \\
$ 512 \times 512  $ &1.98897e-03 &2.351983 &8.51309e-06 &1.830711 &3.06834e-04 &2.145475    \\
$ 1024\times 1024 $ &4.02685e-04 &2.304298 &3.13898e-06 &1.439387 &8.16286e-05 &1.910312    \\
\hline
\end{tabular}
\label{Table.ehl_hsp1_kp0.0}
\end{table}
\begin{table}
\caption{Comparison of $L^{\infty}$, $L^1$ and $L^2$ errors
obtained for (M=20,L=10 case) with $\kappa=1/3$ by splitting $L_{hs1}$ over the domain
$\Omega=[-2.5,2.5]\times [-2.5,2.5]$.}
\centering 
\begin{tabular}{ccccccc}
\hline 
$N$ &$L_{\infty}$-error & $p_{\infty}$ & $L^1$-error & $p_{1}$&$L^2$-error & $p_{2}$\\ 
\hline
$ 32 \times  32   $ &1.28495e-01 &--       &3.46499e-03 &--       &4.97302e-02 &--              \\
$ 64 \times  64   $ &6.61681e-02 &0.957504 &4.17570e-04 &3.052761 &1.40651e-02 &1.822002        \\
$ 128 \times 128  $ &3.34724e-02 &0.983164 &1.07470e-04 &1.958084 &5.05401e-03 &1.476619        \\
$256 \times  256  $ &8.88278e-03 &1.913889 &2.70266e-05 &1.991482 &1.23452e-03 &2.033478         \\
$512 \times  512  $ &1.64936e-03 &2.429105 &7.15546e-06 &1.917264 &2.47734e-04 &2.317086         \\
$1024 \times 1024 $ &2.79280e-04 &2.562122 &2.77208e-06 &1.368076 &6.00344e-05 &2.044930         \\
\hline
\end{tabular}
\label{Table.ehl_hsp1_kp0.33}
\end{table} 
\begin{table}
\caption{Comparison of $L^{\infty}$, $L^1$ and $L^2$ errors
obtained (M=20,L=10 case) with $\kappa=-1.0$ by splitting $L_{hs1}$ over the domain
$\Omega=[-2.5,2.5]\times [-2.5,2.5]$.}
\centering 
\begin{tabular}{ccccccc}
\hline 
$N$ &$L_{\infty}$-error & $p_{\infty}$ & $L^1$-error & $p_{1}$&$L^2$-error & $p_{2}$\\ 
\hline
$ 32 \times  32   $ &7.50604e-02 &--        &2.97122e-03 &--       &4.14394e-02 &--       \\
$ 64 \times  64   $ &7.55099e-02 &-0.008614 &5.91844e-04 &2.327767 &1.69667e-02 &1.288297 \\
$ 128 \times  128 $ &4.53322e-02 &0.736130  &1.91253e-04 &1.629735 &7.61954e-03 &1.154930 \\
$ 256 \times  256 $ &1.61611e-02 &1.488011  &5.75179e-05 &1.733400 &2.50645e-03 &1.604059 \\
$ 512 \times  512 $ &4.50872e-03 &1.841736  &1.67111e-05 &1.783204 &6.94586e-04 &1.851420 \\
$1024 \times 1024 $ &1.10782e-03 &2.024994  &5.21125e-06 &1.681105 &1.89643e-04 &1.872867 \\
\hline
\end{tabular}
\label{Table.ehl_hsp1_kp-1.0}
\end{table} 
\begin{table}
\caption{Comparison of $L^{\infty}$, $L^1$ and $L^2$ errors
obtained for (M=20, L=10) with $\kappa=0.0$ by splitting $L_{hs2}$ over the domain
$\Omega=[-2.5,2.5]\times [-2.5,2.5]$.}
\centering 
\begin{tabular}{ccccccc}
\hline 
$N$ &$L_{\infty}$-error & $p_{\infty}$ & $L^1$-error & $p_{1}$&$L^2$-error & $p_{2}$\\ 
\hline
$ 32 \times  32   $ &7.91753e-02 &--       &3.24093e-03 &--       &4.29201e-02 &--        \\    
$ 64 \times  64   $ &6.76405e-02 &0.227163 &4.21527e-04 &2.942711 &1.35422e-02 &1.664191  \\
$ 128 \times  128 $ &3.53098e-02 &0.937819 &1.14185e-04 &1.884252 &5.18823e-03 &1.384148  \\
$ 256 \times  256 $ &1.01543e-02 &1.797978 &3.02794e-05 &1.914965 &1.35750e-03 &1.934290  \\
$ 512 \times  512 $ &1.99380e-03 &2.348498 &8.51277e-06 &1.830636 &3.07193e-04 &2.143735  \\
$1024 \times 1024 $ &4.04313e-04 &2.301976 &3.13219e-06 &1.442457 &8.15121e-05 &1.914059  \\
\hline
\end{tabular}
\label{Table.ehl_hsp2_kp0.0}
\end{table}  
\begin{table}
\caption{Comparison of $L^{\infty}$, $L^1$ and $L^2$ errors
obtained for (M=20, L=10) with $\kappa=1/3$ by splitting $L_{hs2}$ over the domain
$\Omega=[-2.5,2.5]\times [-2.5,2.5]$.}
\centering 
\begin{tabular}{ccccccc}
\hline 
$N$ &$L_{\infty}$-error & $p_{\infty}$ & $L^1$-error & $p_{1}$&$L^2$-error & $p_{2}$\\ 
\hline
$ 32 \times  32   $ &1.27894e-01 &--       &3.45271e-03 &--       &4.95561e-02 &--       \\
$ 64 \times  64   $ &6.61606e-02 &0.950904 &4.17669e-04 &3.047297 &1.40784e-02 &1.815579 \\
$ 128 \times 128  $ &3.34692e-02 &0.983138 &1.07437e-04 &1.958869 &5.05304e-03 &1.478260  \\
$ 256 \times 256  $ &8.88371e-03 &1.913600 &2.70267e-05 &1.991034 &1.23467e-03 &2.033026  \\
$ 512 \times  512 $ &1.65390e-03 &2.425290 &7.15902e-06 &1.916551 &2.48217e-04 &2.314452  \\
$1024 \times 1024 $ &2.80907e-04 &2.557708 &2.76808e-06 &1.370876 &5.99858e-05 &2.048909   \\
\hline
\end{tabular}
\label{Table.ehl_hsp2_kp0.33}
\end{table}  
\begin{table}
\caption{Comparison of $L^{\infty}$, $L^1$ and $L^2$ errors
obtained for (M=20, L=10) with $\kappa=-1.0$ by splitting $L_{hs2}$ over the domain
$\Omega=[-2.5,2.5]\times [-2.5,2.5]$.}
\centering 
\begin{tabular}{ccccccc}
\hline 
$N$ &$L_{\infty}$-error & $p_{\infty}$ & $L^1$-error & $p_{1}$&$L^2$-error & $p_{2}$\\ 
\hline
$ 32 \times  32   $ &7.47880e-02 &--        &2.95607e-03 &--       &4.12735e-02 &--       \\
$ 64 \times  64   $ &7.54384e-02 &-0.012492 &5.94019e-04 &2.315099 &1.70337e-02 &1.276824 \\
$ 128 \times 128  $ &4.53370e-02 &0.734610  &1.91320e-04 &1.634521 &7.62081e-03 &1.160376  \\
$ 256 \times 256  $ &1.61613e-02 &1.488146  &5.75274e-05 &1.733667 &2.50667e-03 &1.604172  \\
$ 512 \times 512  $ &4.51054e-03 &1.841171  &1.67195e-05 &1.782718 &6.94549e-04 &1.851624  \\
$1024 \times 1024 $ &1.10616e-03 &2.027740  &5.21053e-06 &1.682030 &1.89516e-04 &1.873757  \\
\hline
\end{tabular}
\label{Table.ehl_hsp2_kp-1.0}
\end{table} 
\begin{table}
\caption{Comparison of $L^{\infty}$, $L^1$ and $L^2$ errors
obtained for EHL M=50 and L=10 with $\kappa=0.0$ by splitting $L_{hs2}$ over the domain
$\Omega=[-2.5,2.5]\times [-2.5,2.5]$.}
\centering 
\begin{tabular}{ccccccc}
\hline 
$N$ &$L_{\infty}$-error & $p_{\infty}$ & $L^1$-error & $p_{1}$&$L^2$-error & $p_{2}$\\ 
\hline
$ 16 \times  16   $  &1.58602e-01 &--        &1.03810e-02 &--       &1.33934e-01 &--        \\
$ 32 \times  32   $  &1.37546e-01 &0.205497  &2.42128e-03 &2.100104 &6.26015e-02 &1.097253   \\
$ 64 \times  64   $  &9.91830e-02 &0.471749  &1.00043e-03 &1.275150 &3.00041e-02 &1.061038   \\
$ 128 \times  128 $  &1.28502e-01 &-0.373626 &5.50322e-04 &0.862272 &2.15520e-02 &0.477338   \\
$ 256 \times  256 $  &8.01042e-02 &0.681841  &3.32311e-04 &0.727742 &1.01793e-02 &1.082183    \\
$ 512 \times  512 $  &4.33380e-02 &0.886245  &9.52456e-05 &1.802810 &3.69633e-03 &1.461473    \\
\hline
\end{tabular}
\label{Table.ehl_hsp2_kp0.0_M50}
\end{table}
           \begin{figure}
        \centering
        \includegraphics[width=11cm,height=11cm,angle =-90,keepaspectratio]{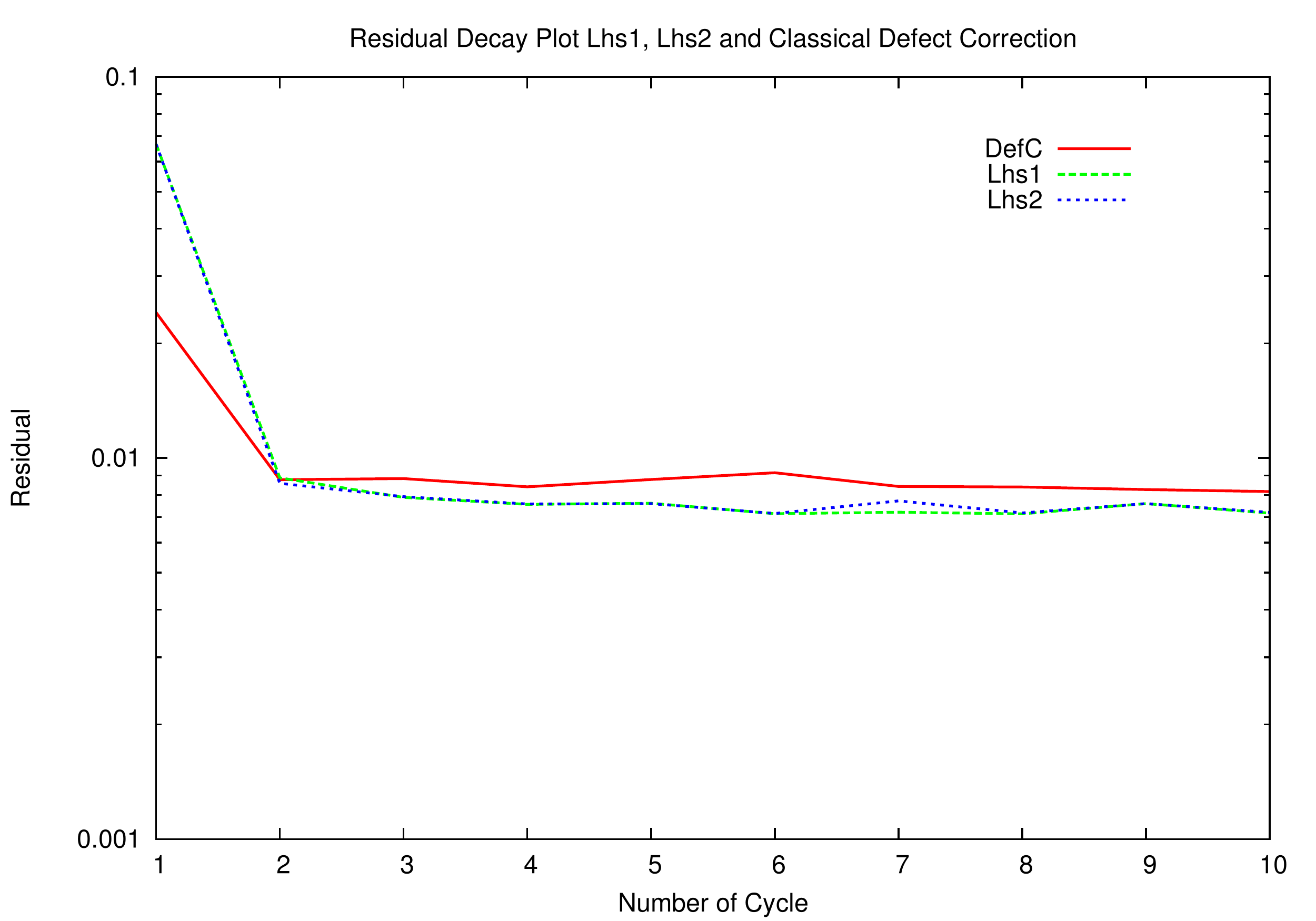}
        \caption{Comparison of residual decay of EHL by splitting $L_{hs1}$, splitting $L_{hs2}$ and classical Defect-correction 
        \label{fig:compare}
        at $\kappa=0.0$ on $7^{th}$ level $V(2,1)$ cycle}
    \end{figure}
       \begin{figure}
         \centering
         \includegraphics[width=10cm,height=12cm,keepaspectratio]{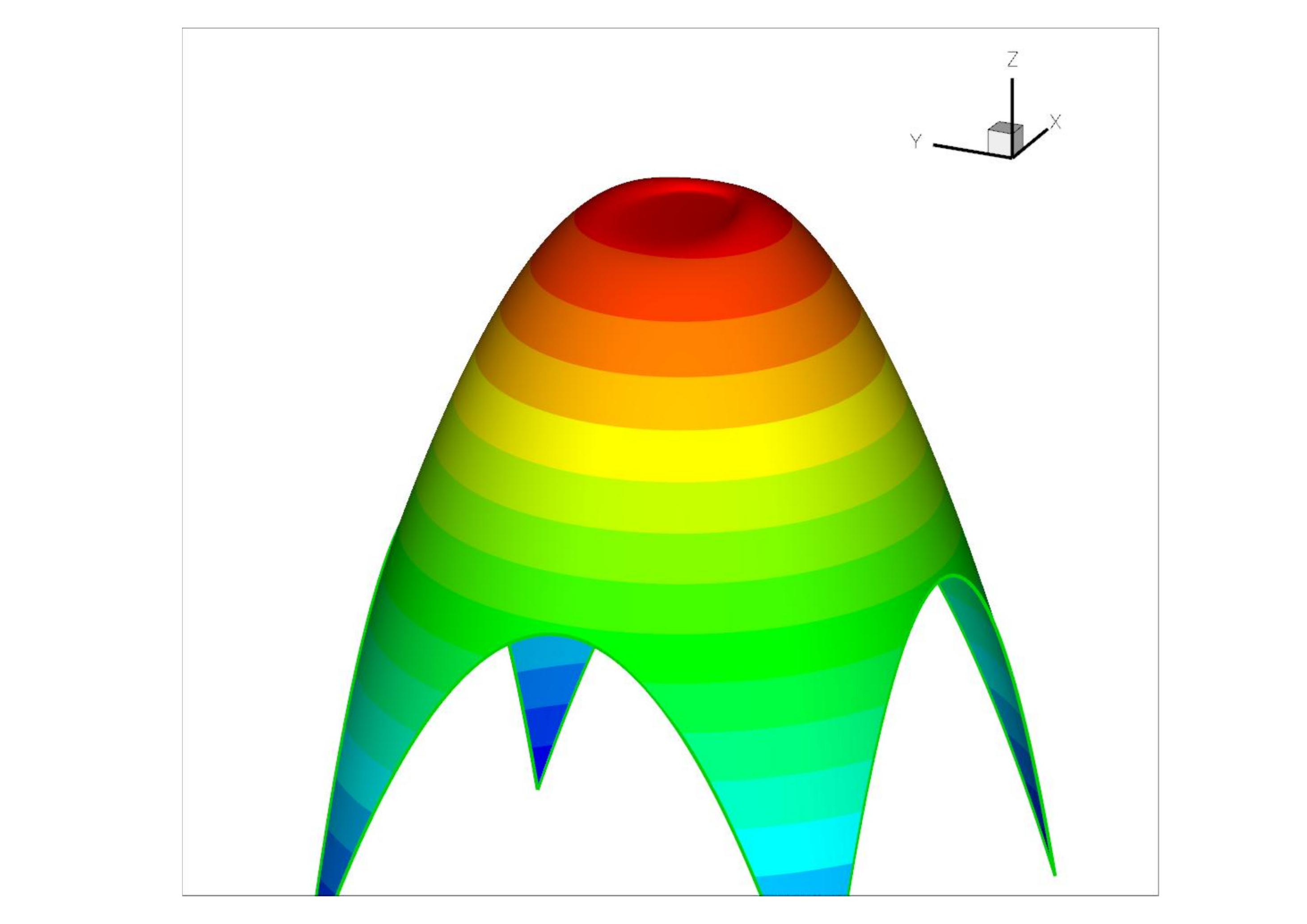}
         \caption{Typical $H$ Plot for Moes parameters $M=20 ,L=10$,$\alpha=1.7 \times 10^{-8}$ at $6^{th}$ level W-cycle}
          \label{fig:fth6}
         \end{figure}
         \begin{figure}
           \centering
           \includegraphics[width=10cm,height=12cm,keepaspectratio]{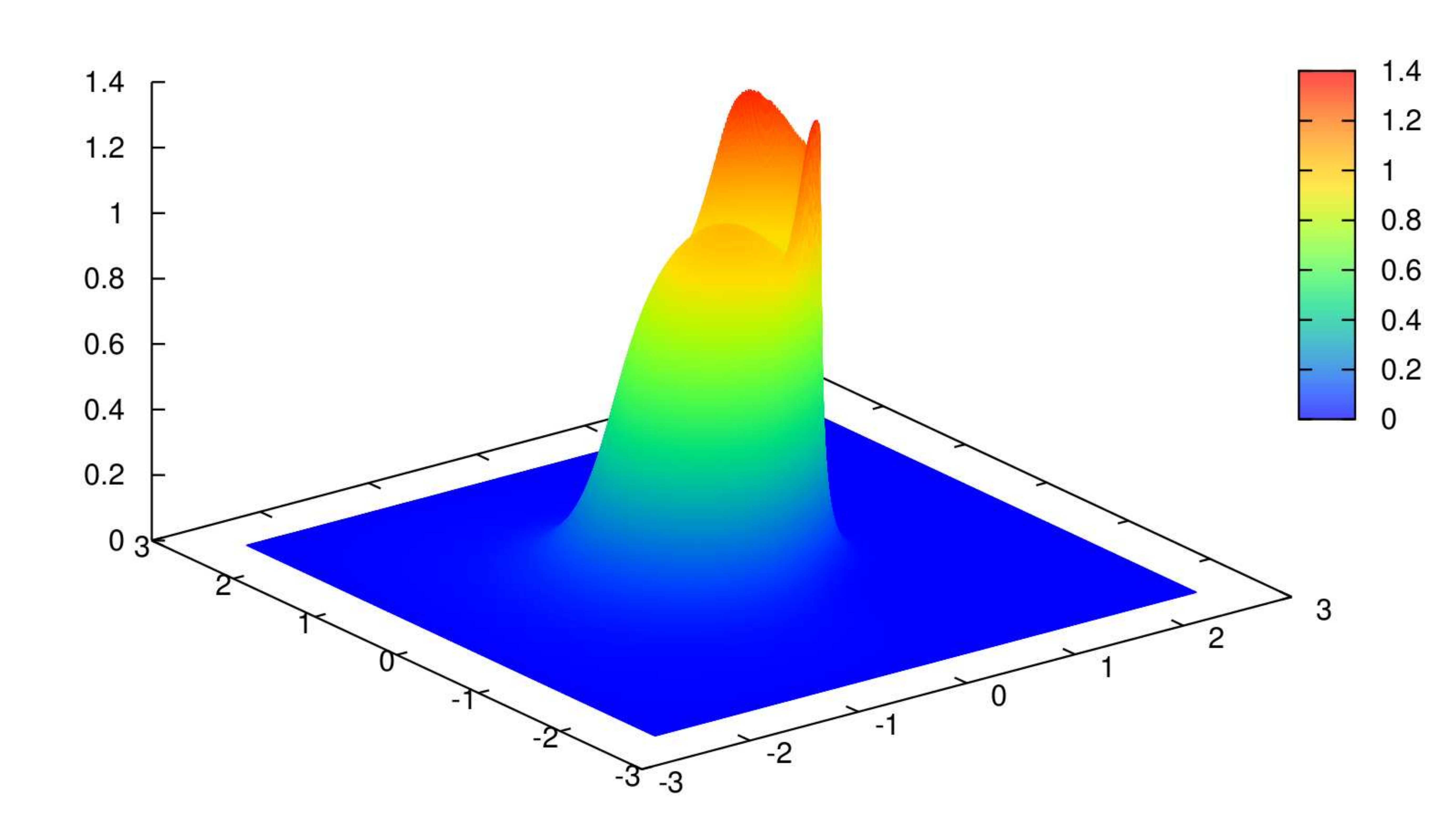}
           \caption{$P$ Plot for Moes parameters $M=20 ,L=10$,$\alpha=1.7 \times 10^{-8}$ at $6^{th}$ level W-cycle}
           \label{fig:P_M_20_L_10}
         \end{figure}
  \begin{figure}
          \centering
          \includegraphics[width=10cm,height=12cm,keepaspectratio]{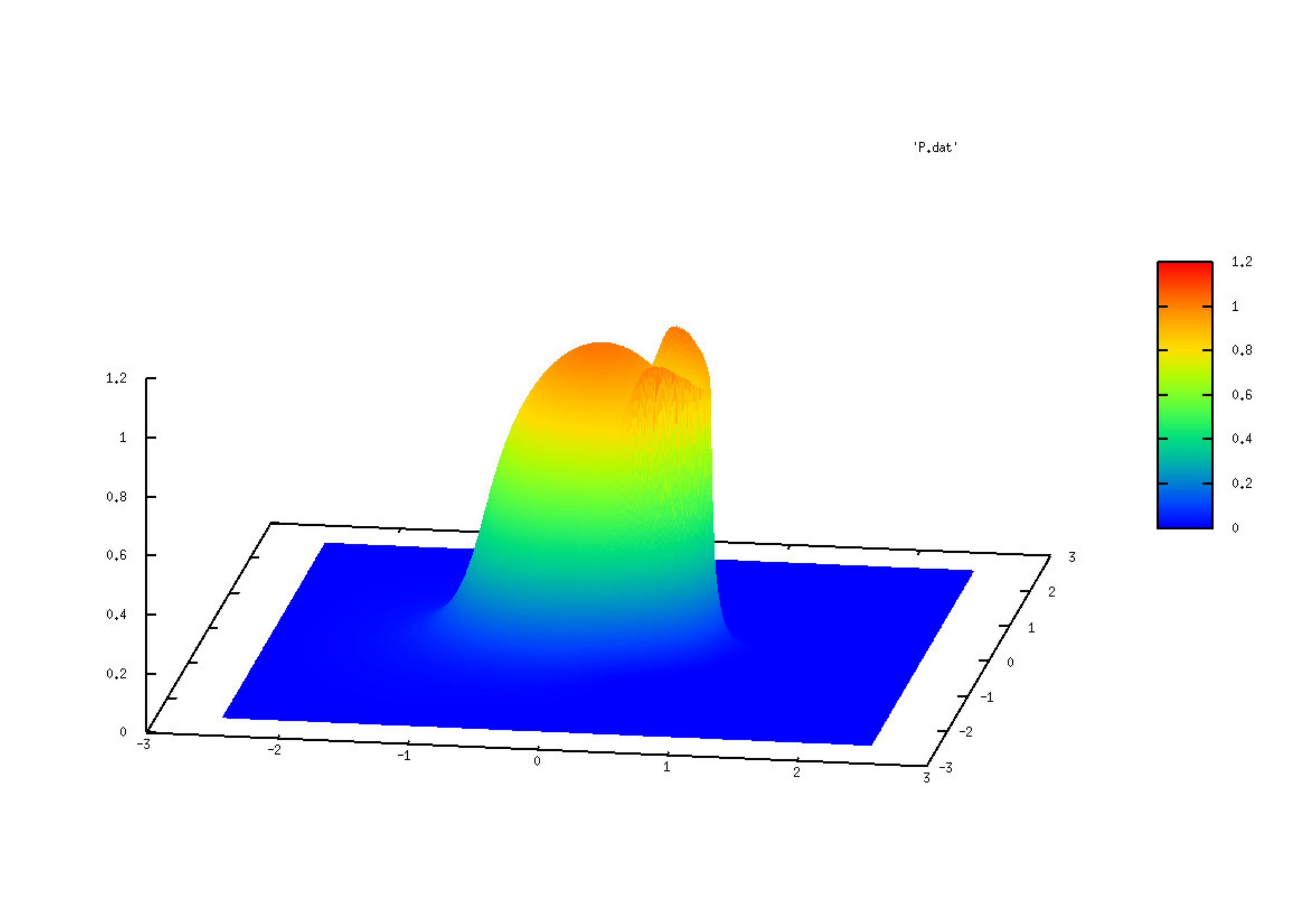}
          \caption{$P$ Plot for Moes parameters $M=50 ,L=10$,$\alpha=1.7 \times 10^{-8}$ at $7^{th}$ level V-cycle}
          \label{fig:P_M_50_L_10}
      \end{figure}
            \begin{figure}
         \centering
         \includegraphics[width=10cm,height=12cm,keepaspectratio]{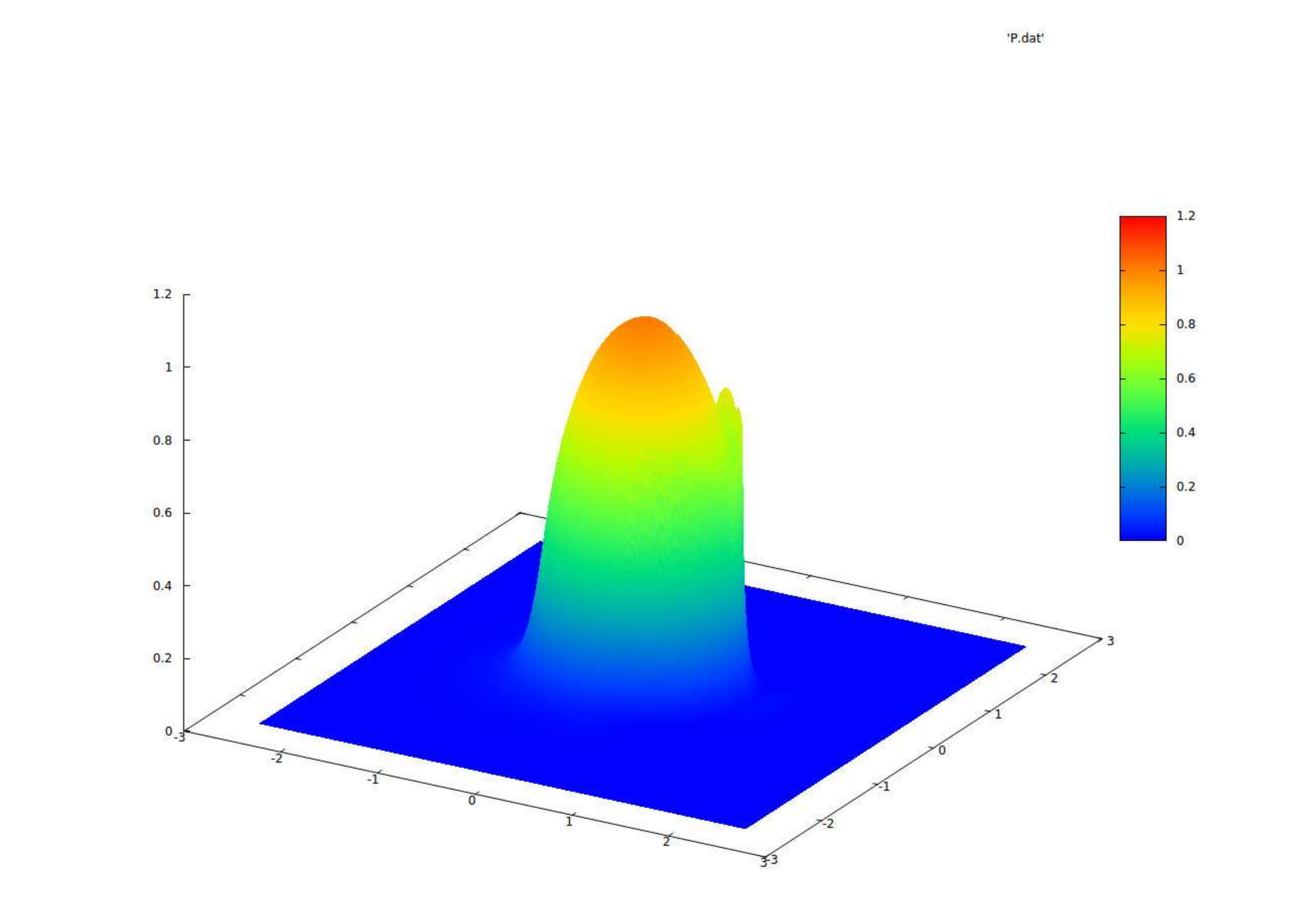}
         \caption{Pressure Plot Moes parameters $M=100 ,L=10$,$\alpha=1.7 \times 10^{-8}$ at $7^{th}$ level V-cycle}
         \label{fig:P_M_100_L_10}
     \end{figure}
            \begin{figure}
         \centering
         \includegraphics[width=10cm,height=12cm,keepaspectratio]{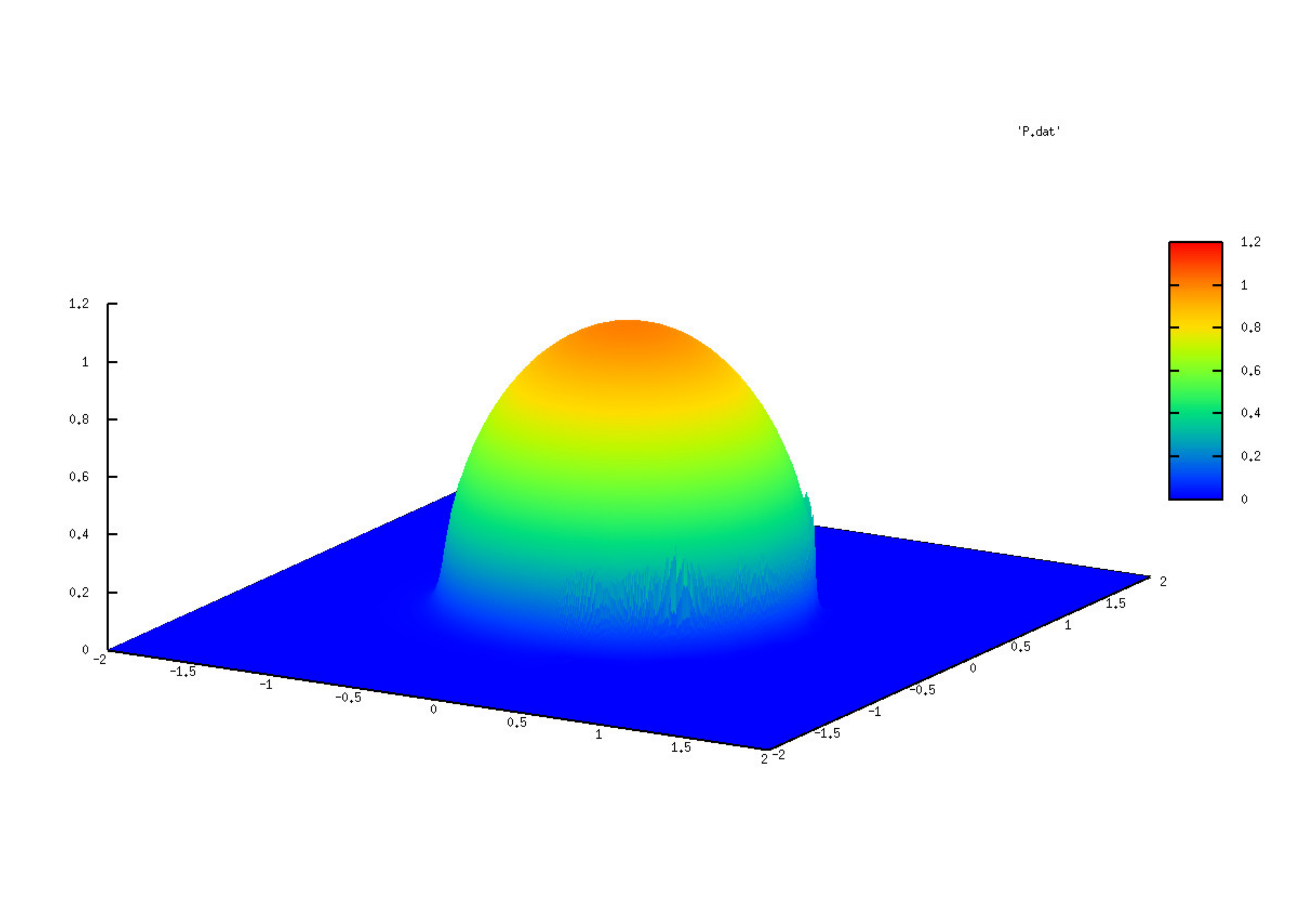}
         \caption{Pressure Plot Moes parameters $M=1000 ,L=10$,$\alpha=1.7 \times 10^{-8}$ at $7^{th}$ level V-cycle}
         \label{fig:P_M_1000_L_10}
     \end{figure}      
\section{Conclusion}\label{sec:seven}
A limiter based hybrid line splittings have been outlined for solving EHL point contact problem (in the form of LCP) on hierarchical multi level grid.
The key idea of using such splitting to facilitate artificial diffusion only the region of steep gradient of pressure profile and to
improve the accuracy on the other part (smooth region of pressure profile) of the domain.  
These illustrated splittings have been devised by bringing left hand side matrix in $M$-matrix form using 
second order discretization of Reynolds equation and rest term on the right hand side.
Additionally, the hybrid line splitting has been designed with help a switcher which depends upon magnitude of $\epsilon/h$.
When $\epsilon/h \le 0.6$, we have applied distributive Jacobi line splitting else, 
we have implemented Gauss-Seidel line splitting during updating new solution.
The derived switcher is important as it noticeably allows us in reducing the ill-conditioning of the discretized matrix when
$\epsilon$ is almost equal to zero. The robustness of the splittings have been analyzed performing series of numerical experiments.
Moreover, robustness range of splittings has been investigated and compared with other splittings. 
For linear $\kappa-$ discretization, we have performed Fourier analysis in order to validate the multi-grid convergence behavior
theoretically. Numerical experiments conform that the performance of these hybrid line splittings are 
robust not only for linear case but also for EHL model too. A remarkable achievement of these splittings are that 
it helps us in developing of higher-order discretization without losing
stability in relaxation and without the use of double discretization scheme like defect-correction technique in multi-grid solver.
Numerical experiments confirm that residual decay of direct splittings are comparably better than classical defect-correction. 
In this study, we have analyzed the performance of splittings through known limiters available in literature which works satisfactory
in all study cases. Another remarkable advantage of the adopted splittings can be noted as it does not demand 
any extra tuning parameter and produces reasonable numerical solution for large range of load variation.
\section{Acknowledgment}
This work is fully funded by DST-SERB Project reference no.PDF/2017/000202 under N-PDF fellowship program 
and working group at the Tata Institute of Fundamental Research, TIFR-CAM, Bangalore.
Author is highly indebted to IIT Kanpur for all kind of support that facilitated the
completion of this work.
\appendix
\section{Some Notation used in EHL model}\label{app:one}
$p_{H} \rightarrow$ Maximum Hertzian pressure.\\
$\eta_{0}\rightarrow$ Ambient pressure viscosity.\\
$H_{00}\rightarrow$  Central offset film thickness.\\
$a\rightarrow$ Radius of point contact circle.\\
$\alpha \rightarrow$ Pressure viscosity coefficient.\\
$u_{s} = u_{1}+u_{2}$, where $u_{1}$ upper surface velocity and $u_{2}$ lower surface velocity respectively.\\
$p_{0} \rightarrow$ Constant ($p_{0}=1.98 \times 10^{8}$), $z$ is pressure viscosity index ($z=0.68$).\\
$R \rightarrow$ Reduced radius of curvature defined as $R^{-1}=R_{1}^{-1} +R_{2}^{-1}$,\\
where $R_{1}$ and $R_{2}$ are curvature of upper contact surface and lower contact surface respectively.\\
$L$ and $M$ are Moes parameters and they are related as below.\\
$ L=G(2U)^{\frac{1}{4}}, M=W(2U)^{-\frac{1}{2}}$, where \\
$2U=\dfrac{(\eta_{0}u_{s} )}{(E^{'}R)}, W=\dfrac{F}{E'R},p_{H}=\dfrac{(3F)}{(2 \pi a^{2})}$.\\
$\sigma^{n+1}=u^{n+1}-u^{n}$ denote as difference between latest approximation solution $u^{n+1}$ and its predecessor $u^{n}$. 
\bibliographystyle{plain}
\bibliography{refer}
\end{document}